\documentclass[11pt,a4paper]{amsart}

\usepackage[utf8]{inputenc}
\usepackage{amsmath, amssymb, amsthm}
\usepackage{geometry}
\usepackage{hyperref}
\usepackage{draftwatermark}
\SetWatermarkText{WORK IN PROGRESS - INCOMPLETE}
\SetWatermarkScale{0.8}
\SetWatermarkLightness{0.85}

\newcommand{\norm}[1]{\left\lVert#1\right\rVert}
\newcommand{\abs}[1]{\left\lvert#1\right\rvert}
\newcommand{\inner}[2]{\langle #1, #2 \rangle}
\newcommand{\R}{\mathbb{R}}
\newcommand{\D}{\mathcal{D}}
\newcommand{\eps}{\varepsilon}

\newtheorem{theorem}{Theorem}[section]
\newtheorem{lemma}[theorem]{Lemma}
\newtheorem{corollary}[theorem]{Corollary}
\newtheorem{proposition}[theorem]{Proposition}
\newtheorem{definition}[theorem]{Definition}
\newtheorem{remark}[theorem]{Remark}
\newtheorem{example}[theorem]{Example}

\title[Phase-Space Orthogonality]{On Phase-Space Orthogonality for Higher-Order Distributions: Obstructions and Algebraic Resolutions}
\thanks{This manuscript is a preliminary draft and a work in progress. The bibliography is currently incomplete and will be updated. Some mathematical details remain to be fully formalized. Comments and missing references are highly welcome.}

\author{Marin Mi\v{s}ur}
\address{University of Zagreb, Faculty of Science, Bijeni\v{c}ka cesta 30, 10000 Zagreb, Croatia}
\email{mmisur@math.hr}

\date{\today}

\subjclass[2020]{Primary 46F10, 35A27; Secondary 35B27, 35S05}
\keywords{Microlocal defect measures, phase-space orthogonality, principal symbols, homogenization in the $L^p$-$L^q$ framework, polarization.}

\begin{document}

\maketitle

\begin{abstract}
The extension of microlocal defect measures to the $L^p$-$L^q$ framework necessitates the use of distributions which possess strictly positive finite order. A topological $\eps$-bounding definition of orthogonality for such distributions was recently introduced by Antoni\'c, Mitrovi\'c, and Peri\'c. Building on their framework, we systematically map the obstructions that arise when this natural approach is transported to strictly positive order, and we provide a complementary algebraic resolution. We first establish that while smooth spatial multipliers provide a robust framework for separating order-zero Radon measures, the partition-of-unity route to phase-space geometric separation breaks down for higher-order distributions: the separating frequency multiplier suffers an $\mathcal{O}(\delta^{-\kappa})$ derivative divergence when frequency supports touch, which we prove defeats the a priori bound at every order $\kappa \ge 1$ and produces an explicit non-orthogonality already at order one. Transitioning to algebraic phase-space projectors reveals a second severe obstruction: it is mathematically impossible to separate scalar distributions algebraically, as one-dimensional smooth projectors enforce topological triviality. We demonstrate that the resolution is achieved exclusively within vector-valued PDE systems. Rather than relying on arbitrarily constructed, non-physical geometric scalar cut-offs $\chi(x,\xi)$ to separate distinct modes, we redefine orthogonality canonically via zero-order algebraic matrix projectors. This replaces artificial geometric separation with the intrinsic spectral polarization of the governing system, yielding a coordinate-free microlocal orthogonality for systems of higher-order distributions.
\end{abstract}

\vspace{0.5cm}

\section{The Measure-Theoretic Baseline and Extension to Distributions}
\label{sec:intro}

In the classical framework of $L^2$ homogenization established by G\'erard \cite{gerard1991} and Tartar \cite{tartar1990}, the orthogonality of weakly converging sequences is characterized by the mutual singularity of their corresponding H-measures. For Radon measures $\mu, \nu \in \D_0'(\Omega)$, orthogonality ($\mu \perp \nu$) is synonymous with the measures being \textit{mutually alien}: there exist disjoint measurable sets $M$ and $N$ such that $\mu$ is concentrated on $M$ and $\nu$ is concentrated on $N$.

To generalize this beyond measure theory to distributions of finite order, the set-theoretic disjointness must be translated into a topological bound that accounts for derivatives. Because higher-order distributions evaluate the derivatives of test functions, one cannot simply evaluate them on the characteristic function of a set. Antoni\'c, Mitrovi\'c, and Peri\'c \cite{antonic2025} address this by redefining orthogonality purely through the continuous bounding of test functions. They define orthogonality for finite-order distributions on a differentiable manifold $X$ as follows: Two distributions $\mu$ and $\nu$ of finite order (with orders $k$ and $l$, respectively) are orthogonal ($\mu \perp \nu$) if, for any $\eps > 0$, there exist open sets $U$ and $V$ in $X$ satisfying $U \cup V = X$, such that for all test functions $\varphi_1 \in C_c^k(X)$ and $\varphi_2 \in C_c^l(X)$ with $\text{supp}(\varphi_1) \subseteq U$ and $\text{supp}(\varphi_2) \subseteq V$:
\begin{align*}
    \abs{\inner{\mu}{\varphi_1}} &\le \eps \norm{\varphi_1}_{C_b^k(X)} \\
    \abs{\inner{\nu}{\varphi_2}} &\le \eps \norm{\varphi_2}_{C_b^l(X)}
\end{align*}

The authors prove in their Theorem 1 that when applied to Radon measures ($k=l=0$), this $\eps$-bounding definition is strictly equivalent to the measures being mutually alien. This topological separation forms the crucial bridge to their ultimate framework. By replacing the standard spatial manifold $X$ with the cospherical bundle $S^*\Omega$ and substituting the standard derivative norms with the anisotropic norm $C_b^{0,\kappa}(S^*\Omega)$, this precise $\eps$-bounding definition has been proposed in the recent literature as a characterization of the orthogonality of anisotropic distributions \cite{antonic2021, misur2025}. 

\textit{Anisotropic distributions.} We recall the spaces of \cite{antonic2021}, to which we refer for the full construction. For $\kappa \in \mathbb{N}_0$, the test space $C_c^{0,\kappa}(S^*\Omega)$ consists of compactly supported continuous functions $\varphi(x,\xi)$ on the cospherical bundle that are continuously differentiable to order $0$ in the base variable $x$ and to order $\kappa$ in the fibre variable $\xi \in S^{d-1}$, normed by
\begin{equation*}
    \norm{\varphi}_{C_b^{0,\kappa}(S^*\Omega)} = \max_{\abs{\alpha} \le \kappa} \norm{\partial_\xi^\alpha \varphi}_{L^\infty(S^*\Omega)},
\end{equation*}
and topologised as the strict inductive limit over compact subsets (so that the space is complete). The anisotropic distributions of order $(0,\kappa)$ form its dual $\D_{0,\kappa}'(S^*\Omega) = \big(C_c^{0,\kappa}(S^*\Omega)\big)'$: functionals obeying $\abs{\inner{\mu}{\varphi}} \le C \norm{\varphi}_{C_b^{0,\kappa}}$ on each compact set, which evaluate no spatial and up to $\kappa$ frequency derivatives of their arguments. The operator-valued analogue $\D_{0,\kappa}'(S^*\Omega, \mathcal{L}^1(H))$ replaces the scalar codomain by the trace-class operators, paired via the trace as made precise in Section~\ref{sec:matrix_projectors}.

However, as we will demonstrate, the $\eps$-bounding definition is itself perfectly sound; it is only the standard route for verifying it in practice---subordinating a partition of unity to a microlocal cover---that encounters a genuine obstruction at strictly positive order. This obstruction is intrinsic to the partition-of-unity technique and in no way diminishes the value of the definition itself. To orient the reader through our resolution of this problem, the remainder of this note is structured as follows. 

In Section~\ref{sec:order_zero}, we construct the order-zero baseline. We establish a robust, purely functional multiplier framework for scalar Radon measures (Section~\ref{sec:radon_multipliers}), verify its structural inheritance and physical applicability (Section~\ref{sec:structural_props}), and successfully extend the approach to operator-valued measures to capture polarization orthogonality (Section~\ref{sec:operator_valued}).

The barriers to separating higher-order distributions are systematically exposed in Section~\ref{sec:obstructions}. We demonstrate that subordinating a phase-space partition of unity to a microlocal cover triggers an $\mathcal{O}(\delta^{-\kappa})$ derivative divergence when frequency supports touch, and prove that algebraic separation is impossible for scalar distributions due to topological triviality.

Finally, Section~\ref{sec:resolution} delivers the resolution exclusively within vector-valued systems. We establish a complementary duality for phase-space separation. While scalar spatial multipliers remain strictly necessary and mathematically sound for separating geometrically disjoint wave packets of the same mode, they act as arbitrary, non-physical boundaries. For distinct modes, we abandon manual geometric construction entirely. By leveraging the algebraic matrix projectors of the principal symbol, we establish Canonical Algebraic Separation, isolating modes natively through their intrinsic spectral polarization. We further demonstrate that while this algebraic framework successfully recovers linear additivity and flow conservation, the exact Pythagorean decoupling of local energy is fundamentally and permanently lost for higher-order distributions due to the absence of measure-theoretic positivity. We conclude by addressing alternative paradigms (Section~\ref{sec:alternative_paradigms}) and the future frontier of eigenvalue crossings (Section~\ref{sec:future_work}).

\section{The Order-Zero Baseline}
\label{sec:order_zero}

\subsection{Multiplier Frameworks for Scalar Radon Measures}
\label{sec:radon_multipliers}

Given that H-measures are built upon pseudo-differential operators acting on smooth symbols, it is natural to seek a definition of orthogonality driven by smooth scalar multipliers rather than rigid topological open covers. For Radon measures (distributions of order zero, $\D_0'(X)$), this can be achieved without triggering derivative divergences. We formally introduce two equivalent multiplier-based characterisations.

\begin{definition}[Asymptotic Multiplier]
\label{def:asymptotic_mult}
Two Radon measures $\mu, \nu \in \D_0'(X)$ are orthogonal if there exists a sequence of smooth cut-offs $(\chi_k)_{k \in \mathbb{N}} \subset C^\infty(X)$, uniformly bounded such that $0 \le \chi_k \le 1$, satisfying for all $\phi \in C_c^\infty(X)$:
\begin{equation*}
    \lim_{k \to \infty} \inner{\mu}{(1 - \chi_k)\phi} = 0 \quad \text{and} \quad \lim_{k \to \infty} \inner{\nu}{\chi_k\phi} = 0.
\end{equation*}
\end{definition}

While the asymptotic sequence elegantly separates the measures, it forces a limit inside the functional. An alternative, algebraically stable formulation shifts this mechanism to an infimum over the space of all valid multipliers.

\begin{definition}[Infimum Multiplier]
\label{def:infimum_mult}
Two Radon measures $\mu, \nu \in \D_0'(X)$ are orthogonal if, for every test function $\phi \in C_c^\infty(X)$, the following condition holds:
\begin{equation*}
    \inf_{\substack{\chi \in C^\infty(X) \\ 0 \le \chi \le 1}} \Big( \abs{\inner{\mu}{\chi \phi}} + \abs{\inner{\nu}{(1 - \chi) \phi}} \Big) = 0.
\end{equation*}
\end{definition}

To ensure these definitions form a rigorous foundation for microlocal analysis, we verify that they form a complete equivalence class with both the classical measure-theoretic understanding of orthogonality (mutually alien measures) and the topological $\eps$-bounding extension proposed in recent literature.

\begin{theorem}[Equivalence for Radon Measures]
\label{thm:equivalence_radon}
Let $\mu, \nu \in \D_0'(X)$ be complex Radon measures. The following statements are strictly equivalent:
\begin{enumerate}
    \item \label{item:alien} $\mu$ and $\nu$ are mutually alien (i.e., concentrated on disjoint universally measurable sets).
    \item \label{item:eps_bound} $\mu$ and $\nu$ satisfy the topological $\eps$-bounding definition \cite{antonic2025}.
    \item \label{item:asymptotic} $\mu$ and $\nu$ satisfy the Asymptotic Multiplier condition.
    \item \label{item:infimum} $\mu$ and $\nu$ satisfy the Infimum Multiplier condition.
\end{enumerate}
\end{theorem}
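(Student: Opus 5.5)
We prove the cycle of implications (1)$\Rightarrow$(3)$\Rightarrow$(4)$\Rightarrow$(1). The equivalence (1)$\Leftrightarrow$(2) is exactly Theorem~1 of \cite{antonic2025}, which we invoke directly, so (2) joins the chain at no extra cost. Throughout, $X$ is a differentiable manifold, hence locally compact, $\sigma$-compact and metrizable. Consequently $\abs{\mu}$ and $\abs{\nu}$ are Radon, and we may use inner regularity by compact sets on Borel sets of locally finite measure.

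\emph{(1)$\Rightarrow$(3).} Let $M,N$ be disjoint sets with $\abs{\mu}(X\setminus M)=0$ and $\abs{\nu}(X\setminus N)=0$; after discarding null sets we may take them Borel. Fix an exhaustion $X=\bigcup_j K_j$ by compacts. By inner regularity choose compact sets $A_k\subseteq M$ and $B_k\subseteq N$ such that $\abs{\mu}(K_k\cap M\setminus A_k)<1/k$ and $\abs{\nu}(K_k\cap N\setminus B_k)<1/k$. The sets $A_k$ and $B_k$ are disjoint compacts, so smooth Urysohn gives $\chi_k\in C^\infty(X)$ with $0\le\chi_k\le1$, $\chi_k=1$ near $A_k$ and $\chi_k=0$ near $B_k$. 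For $\phi\in C_c^\infty(X)$ with $\operatorname{supp}\phi\subseteq K_{k_0}$ and $k\ge k_0$,
\[
\abs{\inner{\mu}{(1-\chi_k)\phi}}\le \norm{\phi}_\infty\,\abs{\mu}(K_k\cap M\setminus A_k)\le \norm{\phi}_\infty/k ,
\]
and the symmetric estimate holds for $\nu$. This gives (3).

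\emph{(3)$\Rightarrow$(4).} This step is immediate: for each fixed $\phi$ the infimum is bounded by the $k$-th term, which tends to $0$.

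\emph{(4)$\Rightarrow$(1).} This is the main obstacle. Condition (4) gives, for each test function separately, a multiplier $\chi$ that may depend on $\phi$, and we must manufacture a global disjoint pair of supports. We reduce to the Lebesgue decomposition. Write $\nu=\nu_a+\nu_s$ with $\nu_a\ll\abs{\mu}$ and $\nu_s\perp\mu$. It suffices to show $\nu_a=0$, since then $\mu$ and $\nu$ are alien.

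Suppose $\nu_a\neq0$, so $\nu_a=h\abs{\mu}$ with $h\ne0$ on a set of positive $\abs{\mu}$-measure. Write $\mu=g\abs{\mu}$ with $\abs g=1$. We aim for a contradiction by testing against positive $\phi$ localized where $\abs h$ is bounded below and $g$ and $h$ are nearly constant (phase $\approx g_0$, $h\approx h_0$). Lusin's theorem makes this possible on a compact set $K$ of positive $\abs{\mu}$-measure. Take $\phi\ge0$ concentrated near $K$ so that the contribution of $\nu_s$ and of the complement of $K$ is small. Then for every admissible $\chi$,
\[
\abs{\inner{\mu}{\chi\phi}}+\abs{\inner{\nu}{(1-\chi)\phi}}\gtrsim \int\chi\phi\,d\abs{\mu}+\abs{h_0}\int(1-\chi)\phi\,d\abs{\mu}-\text{small}\ge \min(1,\abs{h_0})\int\phi\,d\abs{\mu}-\text{small},
\]
which is strictly positive. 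This contradicts (4).

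The delicate part is making the error terms small uniformly in $\chi$. Since $0\le\chi\le1$, every error is bounded by $\norm{\phi}_\infty$ times the measure of the bad set, which is independent of $\chi$. The remaining work is therefore to select $K$ and $\phi$ so that the bad sets (namely $\abs{\nu_s}$ on the support of $\phi$, and the oscillation of $g,h$) have small measure. For $\abs{\nu_s}$, use a $\abs{\mu}$-null Borel set carrying $\nu_s$, which by outer regularity has open neighbourhoods of small $\abs{\nu_s}$-complement mass. The oscillation of $g$ and $h$ is handled by Lusin's theorem.
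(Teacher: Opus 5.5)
Your proposal is correct. Its hard direction (4)$\Rightarrow$(1) uses the same mechanism as the paper's: Lusin localization to a compact set of positive measure on which the densities have frozen phase, a test function $0\le\phi\le1$ concentrated near that set, and a lower bound independent of $\chi$. The bound is independent of $\chi$ because $\chi$ and $1-\chi$ enter as a convex combination, while every error term is a $\chi$-free mass of a bad set.

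The route differs from the paper's in three places.
\begin{itemize}
\item You import (1)$\Leftrightarrow$(2) from Theorem~1 of \cite{antonic2025}. The paper instead keeps the cycle self-contained. It proves (1)$\Rightarrow$(2) by outer regularity: open $V\supset M$ and $U\supset N$ with $\abs{\nu}(V),\abs{\mu}(U)<\eps$, so that $U\cup V=X$. It proves (2)$\Rightarrow$(3) by taking $\chi_k$ from a partition of unity subordinate to $\{U_k,V_k\}$ at $\eps=1/k$. Citing is legitimate, since the paper itself attributes that equivalence to \cite{antonic2025}. Still, those two steps cost only a few lines, and they are what ties the $\eps$-definition to the multipliers.
\item Your direct (1)$\Rightarrow$(3), via inner regularity and smooth Urysohn on compacts $A_k\subseteq M$, $B_k\subseteq N$, is a clean alternative. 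It yields an explicit $\phi$-independent sequence with rate $\norm{\phi}_\infty/k$ once $\operatorname{supp}\phi\subseteq K_k$. For (3)$\Rightarrow$(4) you of course test with $\chi=1-\chi_k$.
\item For (4)$\Rightarrow$(1), the paper writes $\mu=f\lambda$, $\nu=g\lambda$ with $\lambda=\abs{\mu}+\abs{\nu}$ and argues on $S=\{f\neq0,\ g\neq0\}$. This symmetric set-up absorbs all singular mass into the tail estimate around the Lusin set. Your Lebesgue decomposition $\nu=\nu_a+\nu_s$ relative to $\abs{\mu}$ isolates $\nu_a$ as the exact obstruction, but it forces you to treat $\nu_s$ separately.
\end{itemize}

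That separate step is stated imprecisely: ``open neighbourhoods of small $\abs{\nu_s}$-complement mass'' is not what is needed. Let $Z$ be a $\abs{\mu}$-null Borel carrier of $\nu_s$. You should choose the Lusin compact $K$ inside $X\setminus Z$, which is possible exactly because $\abs{\mu}(Z)=0$; then $\abs{\nu_s}(K)=0$. Outer regularity of $\abs{\nu_s}$, $\abs{\mu}$ and $\abs{\nu_a}=\abs{h}\,\abs{\mu}$ at $K$ then gives an open $W\supset K$ with $\abs{\nu_s}(W)$, $\abs{\mu}(W\setminus K)$ and $\abs{\nu_a}(W\setminus K)$ all small. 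You then take $\phi\in C_c^\infty(W)$ with $\phi=1$ on $K$.

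Two cautions apply here. First, you cannot ask $\operatorname{supp}\phi$ to miss $Z$, because the singular part may consist of atoms on a dense set. Second, the $\nu_a$-tail must be measured by $\abs{h}\,\abs{\mu}$ rather than by $\norm{\phi}_\infty$ times $\abs{\mu}$, since $h$ need not be bounded. With this made explicit, your argument is complete.
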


\begin{proof}
We establish the equivalence via the cycle (\ref{item:alien} $\implies$ \ref{item:eps_bound} $\implies$ \ref{item:asymptotic} $\implies$ \ref{item:infimum} $\implies$ \ref{item:alien}).

\vspace{0.1cm}
\noindent\textbf{(\ref{item:alien} $\implies$ \ref{item:eps_bound}):} Assume $\mu$ and $\nu$ are mutually alien. By the measure-theoretic definition of alienness, there exists a partition of the space into disjoint universally measurable sets $M$ and $N$ ($M \cup N = X$) such that $\mu$ is strictly concentrated on $M$ and $\nu$ is strictly concentrated on $N$. 

By the outer regularity of Radon measures, for any $\eps > 0$, there exist open neighborhoods $V \supset M$ and $U \supset N$ such that the cross-measures are strictly bounded: $\abs{\nu}(V) < \eps$ and $\abs{\mu}(U) < \eps$. Because $M \cup N = X$, their open neighborhoods naturally satisfy the topological cover prerequisite: $U \cup V = X$. 

For any test functions $\varphi_1 \in C_c(X)$ supported in $U$ and $\varphi_2 \in C_c(X)$ supported in $V$, the duality pairings evaluate purely over these low-mass regions. Applying the variation bounds yields:
\begin{equation*}
    \abs{\inner{\mu}{\varphi_1}} \le \abs{\mu}(U) \norm{\varphi_1}_\infty \le \eps \norm{\varphi_1}_\infty,
\end{equation*}
and symmetrically for $\nu$, satisfying the topological $\eps$-bound without requiring $U$ and $V$ to be disjoint.

\vspace{0.1cm}
\noindent\textbf{(\ref{item:eps_bound} $\implies$ \ref{item:asymptotic}):} Assume the topological $\eps$-bounding definition holds. For any $k \in \mathbb{N}$, set $\eps = 1/k$. By definition, there exist open sets $U_k$ and $V_k$ such that $U_k \cup V_k = X$. Let $(\psi_1^{(k)}, \psi_2^{(k)})$ be a smooth partition of unity strictly subordinate to the open cover $\{U_k, V_k\}$. We define our multiplier sequence as $\chi_k = \psi_2^{(k)}$. 

Consequently, $1 - \chi_k = \psi_1^{(k)}$. By the properties of the partition of unity, the support of $1 - \chi_k$ is strictly contained in $U_k$, and the support of $\chi_k$ is strictly contained in $V_k$. For any test function $\phi \in C_c^\infty(X)$, the modulated functions $(1 - \chi_k)\phi$ and $\chi_k\phi$ inherit these support constraints. Applying the topological $\eps$-bound yields:
\begin{equation*}
    \abs{\inner{\mu}{(1 - \chi_k)\phi}} \le \frac{1}{k} \norm{(1 - \chi_k)\phi}_\infty \le \frac{1}{k} \norm{\phi}_\infty,
\end{equation*}
and similarly, $\abs{\inner{\nu}{\chi_k\phi}} \le \frac{1}{k} \norm{\phi}_\infty$. Taking the limit as $k \to \infty$, both evaluations converge to $0$, satisfying the Asymptotic Multiplier condition.

\vspace{0.1cm}
\noindent\textbf{(\ref{item:asymptotic} $\implies$ \ref{item:infimum}):} Assume the Asymptotic Multiplier condition holds. For any $\eps > 0$ and any test function $\phi$, the limit definition guarantees the existence of some finite index $N$ such that $\abs{\inner{\mu}{(1-\chi_N) \phi}} < \eps/2$ and $\abs{\inner{\nu}{\chi_N \phi}} < \eps/2$. To align the indices with the infimum definition, we set our static multiplier to $\tilde{\chi} = 1 - \chi_N$. Because $\chi_N$ is bounded between $0$ and $1$, $\tilde{\chi}$ is a valid test multiplier. Summing the two bounds yields:
\begin{equation*}
     \abs{\inner{\mu}{\tilde{\chi} \phi}} + \abs{\inner{\nu}{(1 - \tilde{\chi}) \phi}} < \eps.
\end{equation*}
Since $\tilde{\chi}$ exists for any arbitrary $\eps > 0$, the infimum over all valid smooth multipliers must be exactly $0$.

\vspace{0.2cm}
\noindent\textbf{(\ref{item:infimum} $\implies$ \ref{item:alien}):} Assume the Infimum Multiplier condition holds. We must show $\inf\{\abs{\mu}, \abs{\nu}\} = 0$. Let $\lambda = \abs{\mu} + \abs{\nu}$. By the Radon-Nikod\'ym theorem, there exist complex-valued functions $f, g \in L^1(\lambda)$ such that $d\mu = f d\lambda$ and $d\nu = g d\lambda$.

Suppose for contradiction that $\mu$ and $\nu$ are not mutually alien. Then the set $S = \{x \in X \mid \abs{f(x)} > 0 \text{ and } \abs{g(x)} > 0\}$ has strictly positive measure: $\lambda(S) > 0$.

By Lusin's theorem, we can find a compact subset $E \subset S$ with $\lambda(E) > 0$ where $f$ and $g$ are continuous. Furthermore, by restricting $E$ to a sufficiently small neighborhood, we can ensure that the phases of $f$ and $g$ are strictly confined. There exist complex constants $z_f, z_g$ with $\abs{z_f} = \abs{z_g} = 1$ such that $\text{Re}( \overline{z_f} f(x) ) \ge c_f > 0$ and $\text{Re}( \overline{z_g} g(x) ) \ge c_g > 0$ for all $x \in E$.

Let $\phi \in C_c^\infty(X)$ be a real-valued cut-off such that $0 \le \phi \le 1$, $\phi = 1$ identically on $E$, and the support of $\phi$ is tightly bounded around $E$ to satisfy $\int_{X \setminus E} \abs{f} \phi \, d\lambda < \frac{1}{4} c_0 \lambda(E)$ and $\int_{X \setminus E} \abs{g} \phi \, d\lambda < \frac{1}{4} c_0 \lambda(E)$, where $c_0 := \min(c_f, c_g) > 0$. Such a $\phi$ exists because $f, g \in L^1(\lambda)$, so the tails over $X \setminus E$ vanish as the support of $\phi$ shrinks down to $E$.

By the Infimum condition, for this fixed $\phi$ and any $\eps > 0$, there exists $\chi \in C^\infty(X)$ with $0 \le \chi \le 1$ such that $\abs{\int_X \chi \phi f \, d\lambda} + \abs{\int_X (1-\chi) \phi g \, d\lambda} < \eps$.

We bound both terms from below by projecting onto the unimodular constants $z_f, z_g$ (recall $\abs{z_f} = \abs{z_g} = 1$). Using $\phi \equiv 1$ on $E$, the phase bounds $\text{Re}(\overline{z_f} f) \ge c_f$ and $\text{Re}(\overline{z_g} g) \ge c_g$ on $E$, and $0 \le \chi, 1-\chi \le 1$:
\begin{align*}
    \abs{\int_X \chi \phi f \, d\lambda} &\ge \text{Re} \left( \overline{z_f} \int_X \chi \phi f \, d\lambda \right) \ge c_f \int_E \chi \, d\lambda - \int_{X \setminus E} \chi \phi \abs{f} \, d\lambda, \\
    \abs{\int_X (1-\chi) \phi g \, d\lambda} &\ge \text{Re} \left( \overline{z_g} \int_X (1-\chi) \phi g \, d\lambda \right) \ge c_g \int_E (1-\chi) \, d\lambda - \int_{X \setminus E} (1-\chi) \phi \abs{g} \, d\lambda.
\end{align*}
The choice of multiplier $\chi$ is irrelevant on the diagonal: pointwise on $E$ the integrand $c_f \chi + c_g (1-\chi)$ is a convex combination of $c_f$ and $c_g$, hence bounded below by $c_0 = \min(c_f, c_g)$, so
\begin{equation*}
    c_f \int_E \chi \, d\lambda + c_g \int_E (1-\chi) \, d\lambda \ge c_0 \int_E \big(\chi + (1-\chi)\big) \, d\lambda = c_0 \, \lambda(E).
\end{equation*}
Adding the two inequalities and discarding the two boundary tails, which are each strictly smaller than $\tfrac14 c_0 \lambda(E)$ by our choice of $\phi$, yields
\begin{equation*}
    \abs{\int_X \chi \phi f \, d\lambda} + \abs{\int_X (1-\chi) \phi g \, d\lambda} \ge c_0 \, \lambda(E) - \tfrac14 c_0 \lambda(E) - \tfrac14 c_0 \lambda(E) = \tfrac12 c_0 \, \lambda(E) > 0.
\end{equation*}
This lower bound is independent of $\chi$, so the infimum over all admissible smooth multipliers is strictly positive, contradicting the Infimum Multiplier condition. This contradiction establishes that $\lambda(S) = 0$, meaning $\mu$ and $\nu$ are mutually alien.
\end{proof}

\subsection{Structural Properties of Multiplier Orthogonality}
\label{sec:structural_props}

Because the multiplier frameworks recover the classical measure-theoretic orthogonality without relying on rigid geometric boundaries, they naturally satisfy the core structural properties intrinsic to distribution theory. For clarity, we utilize the algebraically stable Infimum Multiplier formulation to demonstrate these properties for Radon measures.

\begin{theorem}[Symmetry]
\label{thm:symmetry_radon}
For any $\mu, \nu \in \D_0'(X)$, if $\mu \perp \nu$, then $\nu \perp \mu$.
\end{theorem}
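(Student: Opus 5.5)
The plan is to argue directly from the Infimum Multiplier formulation (Definition~\ref{def:infimum_mult}), using the substitution $\chi \mapsto 1-\chi$. Theorem~\ref{thm:equivalence_radon} also supplies a second, independent route through mutual alienness.

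\textbf{Step 1: the substitution.} The admissible class $\mathcal{A} = \{\chi \in C^\infty(X) : 0 \le \chi \le 1\}$ is invariant under the involution $\chi \mapsto \tilde\chi := 1-\chi$. Indeed, $\tilde\chi$ is smooth, and $0 \le \chi \le 1$ holds if and only if $0 \le 1-\chi \le 1$. So this map is a bijection of $\mathcal{A}$ onto itself.

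\textbf{Step 2: the identity of infima.} Fix $\phi \in C_c^\infty(X)$. For every $\chi \in \mathcal{A}$ we have
\begin{equation*}
\abs{\inner{\nu}{\tilde\chi \phi}} + \abs{\inner{\mu}{(1-\tilde\chi)\phi}} = \abs{\inner{\mu}{\chi\phi}} + \abs{\inner{\nu}{(1-\chi)\phi}},
\end{equation*}
which is just commutativity of addition after substituting. Taking infima over $\mathcal{A}$ and using the bijection from Step~1, the infimum defining $\nu \perp \mu$ equals the infimum defining $\mu \perp \nu$. The latter vanishes by hypothesis, so the former does too. Since $\phi$ was arbitrary, this gives $\nu \perp \mu$.

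\textbf{Alternative check via alienness.} One can instead invoke the equivalence (\ref{item:alien})$\iff$(\ref{item:infimum}) of Theorem~\ref{thm:equivalence_radon}. Mutual alienness is manifestly symmetric: if $\mu$ is concentrated on $M$ and $\nu$ on $N$ with $M \cap N = \emptyset$, simply swap the roles of the two sets.

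\textbf{Main obstacle.} There is essentially no obstacle. The only point needing care is confirming that the constraint $0 \le \chi \le 1$ is preserved under $\chi \mapsto 1-\chi$, so that the infimum ranges over the same class of multipliers. I would present the direct argument as the proof, since it uses no measure theory and mirrors the multiplier philosophy of this section.
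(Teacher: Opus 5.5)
Your proposal is correct and is essentially the paper's own proof: the paper likewise substitutes $\tilde\chi = 1-\chi$ and notes that this involution maps the admissible class $\{0 \le \chi \le 1\}$ onto itself, so the two infima coincide. Your remark that symmetry can also be read off from mutual alienness via Theorem~\ref{thm:equivalence_radon} is a valid extra check the paper does not include.
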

\begin{proof}
Let $\tilde{\chi} = 1 - \chi$. As $\chi$ ranges over valid smooth functions bounded between $0$ and $1$, $\tilde{\chi}$ spans the exact same set of multipliers. Substituting $\tilde{\chi}$ into the functional immediately yields commutativity under the infimum.
\end{proof}

\begin{theorem}[Topological Consistency]
\label{thm:topo_consistency_radon}
If the topological supports are disjoint, $\text{supp}(\mu) \cap \text{supp}(\nu) = \emptyset$, then $\mu \perp \nu$.
\end{theorem}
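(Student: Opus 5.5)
The plan is to verify the Infimum Multiplier condition of Definition~\ref{def:infimum_mult} directly, by exhibiting, for every test function $\phi$, a single multiplier $\chi$ for which the functional vanishes exactly. Set $K = \text{supp}(\mu)$ and $L = \text{supp}(\nu)$. Both are closed and disjoint in the manifold $X$. The idea is to build a smooth function $\chi \in C^\infty(X)$ with $0 \le \chi \le 1$, $\chi \equiv 1$ on an open neighbourhood of $L$, and $\chi \equiv 0$ on an open neighbourhood of $K$.

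\textbf{Constructing $\chi$.} Since $X$ is a (paracompact, hence normal) manifold, disjoint closed sets have disjoint open neighbourhoods. We take open $V \supset L$ and $U \supset K$ with $\overline{U} \cap \overline{V} = \emptyset$; shrinking once more via normality if needed secures the closures. The cover $\{X \setminus \overline{V},\, X \setminus \overline{U}\}$ is an open cover of $X$. We pick a smooth partition of unity $(\psi_1,\psi_2)$ subordinate to it and set $\chi = \psi_1$. Then $\chi = 0$ on $V$ and $\chi = 1$ on $U$. This is the opposite orientation from what we want, so we instead define $\chi := \psi_2$, which vanishes on $U \supset K$ and equals $1$ on $V \supset L$. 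The smooth Urysohn lemma on manifolds gives this directly. It is the only place where topology enters, and I expect it to be the main (though mild) point: we need genuine open neighbourhoods, not merely $\chi = 0$ on $K$, to control the support of $\chi\phi$.

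\textbf{Evaluating the pairings.} Fix $\phi \in C_c^\infty(X)$. The function $\chi\phi$ is compactly supported in $\text{supp}(\chi) \subset X \setminus U$, which is disjoint from $K = \text{supp}(\mu)$. By the definition of support, $\langle \mu, \chi\phi \rangle = 0$; for a Radon measure this is immediate, since $\abs{\mu}(X \setminus K) = 0$. Similarly, $(1-\chi)\phi$ is supported in $\text{supp}(1-\chi) \subset X \setminus V$, disjoint from $L$, so $\langle \nu, (1-\chi)\phi\rangle = 0$. The quantity inside the infimum is therefore $0$ for this $\chi$, and the infimum is $0$ for every $\phi$.

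This gives $\mu \perp \nu$ in the sense of Definition~\ref{def:infimum_mult}, and hence, by Theorem~\ref{thm:equivalence_radon}, in all four equivalent senses. As a remark, the same $\chi$ taken as a constant sequence $\chi_k = 1-\chi$ verifies Definition~\ref{def:asymptotic_mult} directly. Symmetry, by Theorem~\ref{thm:symmetry_radon}, means the orientation convention of $\chi$ is immaterial.
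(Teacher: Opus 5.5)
Your proof is correct and follows the paper's route. Like the paper, you construct a smooth Urysohn multiplier $\chi$ that vanishes near $\text{supp}(\mu)$ and equals $1$ near $\text{supp}(\nu)$, which makes both pairings in the Infimum Multiplier condition vanish exactly. The one minor difference is that the paper first intersects the supports with $\text{supp}(\phi)$ to get disjoint compact sets, while you apply the closed-set version of the lemma via normality of $X$, which gives a single multiplier independent of $\phi$.
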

\begin{proof}
Let $K_\mu$ and $K_\nu$ be the intersections of the respective supports with the compact support of the test function $\phi$. Because the supports are closed and disjoint, $K_\mu$ and $K_\nu$ are disjoint compact sets. By the Smooth Urysohn Lemma, there exists $\chi \in C^\infty(X)$ such that $\chi = 0$ near $K_\mu$ and $\chi = 1$ near $K_\nu$. The partitioned test functions are disjoint from the active distributions, forcing both duality pairings to exactly zero.
\end{proof}

\begin{theorem}[Coordinate Invariance]
\label{thm:coord_invariance_radon}
Let $\Phi: X \to Y$ be a diffeomorphism. If $\mu \perp \nu$ on $X$, then the pushforward measures are orthogonal on $Y$: $\Phi_* \mu \perp \Phi_* \nu$.
\end{theorem}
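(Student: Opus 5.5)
The plan is to work directly with the Infimum Multiplier formulation (Definition~\ref{def:infimum_mult}), which Theorem~\ref{thm:equivalence_radon} shows is equivalent to the other characterisations, and to transport multipliers and test functions along $\Phi$. The pushforward of a Radon measure under a diffeomorphism acts by $\inner{\Phi_*\mu}{\psi} = \inner{\mu}{\psi\circ\Phi}$ for $\psi \in C_c(Y)$. Because $\Phi$ is a homeomorphism, hence proper, $\psi \circ \Phi$ is compactly supported. Because $\Phi$ is smooth, $\psi\circ\Phi \in C_c^\infty(X)$ whenever $\psi \in C_c^\infty(Y)$. So $\Phi_*\mu$ and $\Phi_*\nu$ are again Radon measures on $Y$, and pullback $\psi \mapsto \psi \circ \Phi$ is a bijection $C_c^\infty(Y) \to C_c^\infty(X)$ with inverse $\phi \mapsto \phi\circ\Phi^{-1}$.

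\textbf{Steps.}
\begin{enumerate}
    \item Fix $\psi \in C_c^\infty(Y)$ and set $\phi = \psi\circ\Phi \in C_c^\infty(X)$.
    \item Given $\eps>0$, apply the hypothesis $\mu\perp\nu$ to obtain $\chi\in C^\infty(X)$ with $0\le\chi\le1$ and
    \begin{equation*}
    \abs{\inner{\mu}{\chi\phi}} + \abs{\inner{\nu}{(1-\chi)\phi}} < \eps .
    \end{equation*}
    \item Define $\tilde\chi = \chi\circ\Phi^{-1} \in C^\infty(Y)$. This still satisfies $0\le\tilde\chi\le1$, since composition preserves pointwise bounds.
    \item Observe the identity $(\tilde\chi\psi)\circ\Phi = \chi\phi$, and similarly $\big((1-\tilde\chi)\psi\big)\circ\Phi = (1-\chi)\phi$.
    \item Conclude
    \begin{equation*}
    \abs{\inner{\Phi_*\mu}{\tilde\chi\psi}} + \abs{\inner{\Phi_*\nu}{(1-\tilde\chi)\psi}} = \abs{\inner{\mu}{\chi\phi}} + \abs{\inner{\nu}{(1-\chi)\phi}} < \eps .
    \end{equation*}
    Since $\eps$ is arbitrary, the infimum over admissible multipliers on $Y$ vanishes, so $\Phi_*\mu\perp\Phi_*\nu$.
\end{enumerate}

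As a cross-check, there is an alternative route through item~(\ref{item:alien}) of Theorem~\ref{thm:equivalence_radon}. If $\mu$ is concentrated on $M$ and $\nu$ on $N$ with $M \cap N = \emptyset$, then $\Phi(M)$ and $\Phi(N)$ are disjoint universally measurable sets, because $\Phi$ is a Borel isomorphism. By definition of pushforward, $\abs{\Phi_*\mu} = \Phi_*\abs{\mu}$ is concentrated on $\Phi(M)$, and likewise $\Phi_*\nu$ on $\Phi(N)$.

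I expect no serious obstacle. The only points needing care are two bookkeeping facts used above. First, the pushforward is well defined on compactly supported test functions, which relies on properness of $\Phi$. Second, the admissible multiplier class $\{\chi\in C^\infty,\ 0\le\chi\le1\}$ is preserved exactly under composition with $\Phi^{\pm1}$. The second fact is what makes the argument purely formal at order zero: no derivatives of $\chi$ enter the bound, unlike what the introduction announces for higher orders.
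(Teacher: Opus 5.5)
Your proof is correct and takes essentially the same approach as the paper. Both pull back the test function as $\phi = \psi\circ\Phi$, take the multiplier $\chi$ supplied by $\mu\perp\nu$, and transport it as $\chi\circ\Phi^{-1}$, with the identity $(\tilde\chi\psi)\circ\Phi = \chi\phi$ carrying the bound over exactly. Your bookkeeping (properness of $\Phi$, preservation of $0\le\chi\le 1$) and the alternative route through mutual alienness are valid details that the paper leaves implicit.
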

\begin{proof}
The action of the pushforward relies on the pullback of the test function $\psi \in C_c^\infty(Y)$. Let $\phi = \psi \circ \Phi \in C_c^\infty(X)$. For any $\eps > 0$, the orthogonality on $X$ provides an optimal multiplier $\chi \in C^\infty(X)$. We construct a valid multiplier on $Y$ via the pushforward: $\eta = \chi \circ \Phi^{-1}$. Utilizing functional composition, the bounded evaluation translates to the new coordinate system: $\abs{\inner{\Phi_* \mu}{\eta \psi}} + \abs{\inner{\Phi_* \nu}{(1 - \eta) \psi}} < \eps$.
\end{proof}

\begin{theorem}[Localization]
\label{thm:localization_radon}
Let $\Omega \subset X$ be an open subset. If $\mu \perp \nu$ on $X$, then their restrictions to the sub-domain are orthogonal: $\mu|_\Omega \perp \nu|_\Omega$.
\end{theorem}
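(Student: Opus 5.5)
The plan is to work directly with the Infimum Multiplier formulation (Definition~\ref{def:infimum_mult}), as in the preceding structural results. The restrictions $\mu|_\Omega,\nu|_\Omega\in\D_0'(\Omega)$ act on $\phi\in C_c^\infty(\Omega)$ by extending $\phi$ by zero. So the idea is to transfer the infimum condition from $X$ to $\Omega$ by restricting the multipliers.

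First, fix $\phi\in C_c^\infty(\Omega)$. Its extension by zero, $\tilde\phi$, lies in $C_c^\infty(X)$, because $\operatorname{supp}\phi$ is a compact subset of the open set $\Omega$, so $\tilde\phi$ vanishes on a neighbourhood of $X\setminus\Omega$. By definition of restriction,
\begin{equation*}
\inner{\mu|_\Omega}{\psi}=\inner{\mu}{\tilde\psi}
\end{equation*}
for every $\psi\in C_c(\Omega)$, and likewise for $\nu$.

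Second, given $\eps>0$, apply the hypothesis $\mu\perp\nu$ on $X$ to $\tilde\phi$. This yields $\chi\in C^\infty(X)$ with $0\le\chi\le1$ and
\begin{equation*}
\abs{\inner{\mu}{\chi\tilde\phi}}+\abs{\inner{\nu}{(1-\chi)\tilde\phi}}<\eps .
\end{equation*}
Set $\eta=\chi|_\Omega\in C^\infty(\Omega)$, which still satisfies $0\le\eta\le1$. The products $\eta\phi$ and $(1-\eta)\phi$ are compactly supported in $\Omega$, and their zero extensions are exactly $\chi\tilde\phi$ and $(1-\chi)\tilde\phi$. Hence
\begin{equation*}
\abs{\inner{\mu|_\Omega}{\eta\phi}}+\abs{\inner{\nu|_\Omega}{(1-\eta)\phi}}<\eps .
\end{equation*}
Since $\eps$ is arbitrary, the infimum over admissible multipliers on $\Omega$ vanishes, which gives $\mu|_\Omega\perp\nu|_\Omega$.

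There is no serious obstacle. The only point needing care is the identity between the extension by zero of $\eta\phi$ and $\chi\tilde\phi$. This identity holds pointwise, and smoothness is guaranteed by the support of $\phi$ being compact in $\Omega$.

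As an independent cross-check, one can argue via Theorem~\ref{thm:equivalence_radon}. Suppose $\mu,\nu$ are concentrated on disjoint sets $M,N$. Then $\mu|_\Omega$ and $\nu|_\Omega$ are concentrated on $M\cap\Omega$ and $N\cap\Omega$, which are still disjoint and universally measurable in $\Omega$. So the restrictions are mutually alien, hence orthogonal by the same theorem applied on $\Omega$.
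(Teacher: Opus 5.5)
Your proof is correct and follows the paper's argument. You extend $\phi$ by zero, take the global multiplier $\chi$ supplied by $\mu \perp \nu$ on $X$, and localize it to $\Omega$ so that both pairings coincide with the global ones; the paper localizes via $\eta = \theta\chi$ with a bump $\theta \in C_c^\infty(\Omega)$ equal to $1$ on $\operatorname{supp}\phi$, whereas your plain restriction $\eta = \chi|_\Omega$ works just as well (and is slightly simpler), since admissible multipliers on $\Omega$ need not be compactly supported.
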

\begin{proof}
For any test function $\phi \in C_c^\infty(\Omega)$, extend it by zero to $X$. The global orthogonality provides an optimal global multiplier $\chi \in C^\infty(X)$. Define a localized multiplier $\eta = \theta \chi \in C^\infty(\Omega)$, where $\theta \in C_c^\infty(\Omega)$ is a bump function identically $1$ on the support of $\phi$. Because $\eta \phi = \chi \phi$, the bounded evaluation holds strictly within the sub-domain.
\end{proof}

\begin{theorem}[Smooth Module Structure]
\label{thm:module_structure_radon}
The orthogonal complement behaves as a module over $C^\infty(X)$. If $\mu \perp \nu$, then for any smooth functions $f, g \in C^\infty(X)$, $f\mu \perp g\nu$.
\end{theorem}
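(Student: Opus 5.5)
The plan is to check the Infimum Multiplier condition of Definition~\ref{def:infimum_mult} for the pair $(f\mu, g\nu)$ directly, by absorbing the smooth weights into the test function. Fix $\phi \in C_c^\infty(X)$. By the definition of multiplication of a distribution by a smooth function, $\inner{f\mu}{\chi\phi} = \inner{\mu}{\chi (f\phi)}$ and $\inner{g\nu}{(1-\chi)\phi} = \inner{\nu}{(1-\chi)(g\phi)}$. The new test functions $f\phi$ and $g\phi$ lie in $C_c^\infty(X)$, but they differ from one another. Since the infimum condition is stated for a single test function at a time, we cannot apply it to $f\phi$ for $\mu$ and to $g\phi$ for $\nu$ separately: the two infima need not be attained by the same $\chi$.

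This decoupling is the main obstacle. I would resolve it by passing through Theorem~\ref{thm:equivalence_radon}, which lets us use the Asymptotic Multiplier form (item~\ref{item:asymptotic}). That form supplies a single sequence $(\chi_k)$, with $0 \le \chi_k \le 1$, that works simultaneously for every test function. Applying it to $\psi_1 = f\phi$ and $\psi_2 = g\phi$ gives $\inner{\mu}{(1-\chi_k)f\phi} \to 0$ and $\inner{\nu}{\chi_k g\phi} \to 0$. Equivalently,
\begin{equation*}
\inner{f\mu}{(1-\chi_k)\phi} \to 0 \quad\text{and}\quad \inner{g\nu}{\chi_k\phi} \to 0.
\end{equation*}
Hence $f\mu \perp g\nu$ holds in the asymptotic sense, with the same sequence $\chi_k$. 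Applying Theorem~\ref{thm:equivalence_radon} once more (the step \ref{item:asymptotic} $\implies$ \ref{item:infimum}) returns the Infimum formulation.

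We should first note that $f\mu$ and $g\nu$ are again Radon measures, with densities $f\,d\mu$ and $g\,d\nu$, locally finite because $f$ and $g$ are continuous. Theorem~\ref{thm:equivalence_radon} therefore applies to them.

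As an independent check, there is a second route through item~\ref{item:alien}. If $\mu$ is concentrated on $M$ and $\nu$ on $N$ with $M \cap N = \emptyset$, then $|f\mu| = |f|\,|\mu|$ is still concentrated on $M$ and $|g\nu|$ on $N$. So $f\mu$ and $g\nu$ are mutually alien, and the equivalence gives the claim immediately. This route has the advantage of allowing $f$ and $g$ to be arbitrary continuous (or even locally bounded Borel) functions.
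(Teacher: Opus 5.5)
Your proof is correct. Your second route is exactly the paper's argument: the paper uses Theorem~\ref{thm:equivalence_radon} to obtain disjoint sets $M,N$ carrying $\mu,\nu$, observes that $f\mu$ and $g\nu$ remain concentrated on $M$ and $N$, and returns through the equivalence.

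Your main route uses the same cycle of implications but modifies at a different link. Instead of acting on the alien sets (item~1), you act on the asymptotic sequence (item~3). This exploits the fact that Definition~\ref{def:asymptotic_mult} fixes $(\chi_k)$ before the test function. Applying it separately to $f\phi$ and $g\phi$ is therefore legitimate, and it removes the decoupling problem you correctly identified in the infimum form: for complex measures, smallness of $\inner{\mu}{\chi\phi}$ gives no control on $\inner{\mu}{\chi f\phi}$.

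What your route buys is slightly more information. The very same separating sequence for $(\mu,\nu)$ separates every modulated pair $(f\mu,g\nu)$, independently of $f$ and $g$. The return trip also needs only the trivial implication (3)$\Rightarrow$(4).

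What it costs is generality. You need $f\phi$ and $g\phi$ to be admissible test functions, so you need $f,g$ smooth. Tracing the bound $|\inner{\mu}{(1-\chi_k)\psi}|\le k^{-1}\|\psi\|_\infty$ through the proof shows continuous $f,g$ would also do. The alienness route, by contrast, works for arbitrary locally bounded Borel weights, as you note.

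Finally, your main route is not logically independent of alienness. For $(\mu,\nu)$, the step (4)$\Rightarrow$(3) itself passes through (1) and (2) in the paper's cycle. Both arguments therefore rest on the same measure-theoretic step (4)$\Rightarrow$(1).
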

\begin{proof}
By Theorem~\ref{thm:equivalence_radon}, the Infimum Multiplier condition is strictly equivalent to $\mu$ and $\nu$ being mutually alien. Therefore, there exist disjoint sets $M$ and $N$ such that $\mu$ is concentrated on $M$ and $\nu$ is concentrated on $N$. Multiplying by smooth functions $f, g \in C^\infty(X)$ preserves the support structure; the modulated measure $f\mu$ remains concentrated on $M$, and $g\nu$ remains concentrated on $N$. Because they remain mutually alien, applying the equivalence theorem in reverse guarantees they satisfy the Infimum Multiplier condition: $f\mu \perp g\nu$.
\end{proof}

\begin{theorem}[Tensor Product Invariance]
\label{thm:tensor_invariance_radon}
For $\mu \perp \nu$ on $X$ and arbitrary $\omega_1, \omega_2 \in \D_0'(Y)$, the tensor products satisfy $\mu \otimes \omega_1 \perp \nu \otimes \omega_2$ on $X \times Y$.
\end{theorem}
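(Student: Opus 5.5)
The plan is to follow the strategy of Theorem~\ref{thm:module_structure_radon} and pass through the measure-theoretic characterisation of Theorem~\ref{thm:equivalence_radon}, rather than building a multiplier on $X \times Y$ by hand. Since $\mu \perp \nu$, Theorem~\ref{thm:equivalence_radon} gives disjoint universally measurable sets $M, N \subset X$ with $M \cup N = X$, such that $\mu$ is concentrated on $M$ and $\nu$ on $N$. This means $\abs{\mu}(X \setminus M) = 0$ and $\abs{\nu}(X \setminus N) = 0$. The natural candidate sets on the product are $M \times Y$ and $N \times Y$. These are disjoint, they cover $X \times Y$, and they are universally measurable, because they are preimages under the continuous projection $\pi_X : X \times Y \to X$. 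It will be convenient to first shrink $M$ and $N$ to Borel sets, using a $\sigma$-compact Borel kernel of each set for the complete measures $\abs{\mu}$ and $\abs{\nu}$. After this step the product sets are Borel and no measurability subtleties arise.

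The key step is the concentration estimate for the tensor product. I will use the total variation inequality $\abs{\mu \otimes \omega_1} \le \abs{\mu} \otimes \abs{\omega_1}$. This follows from the polar decompositions $d\mu = h\,d\abs{\mu}$ and $d\omega_1 = k\,d\abs{\omega_1}$ with $\abs{h} = \abs{k} = 1$: they give $d(\mu \otimes \omega_1) = h(x)k(y)\, d(\abs{\mu} \otimes \abs{\omega_1})$, and in fact the inequality is an equality. Fubini--Tonelli for the positive Radon product then yields
\begin{equation*}
\abs{\mu \otimes \omega_1}\big((X \setminus M) \times Y\big) \le \abs{\mu}(X \setminus M)\, \abs{\omega_1}(Y) = 0.
\end{equation*}
If $\omega_1$ is only locally finite, I apply this on $K \times Y$ for each compact exhaustion set $K$ and take a countable union. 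The analogous statement holds for $\nu \otimes \omega_2$ on $N \times Y$. Hence $\mu \otimes \omega_1$ and $\nu \otimes \omega_2$ are mutually alien, and Theorem~\ref{thm:equivalence_radon} applied on the manifold $X \times Y$ gives the Infimum Multiplier condition, that is, $\mu \otimes \omega_1 \perp \nu \otimes \omega_2$.

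The main obstacle I expect is measure-theoretic bookkeeping rather than analysis. On a non-compact space the product of Radon measures must be identified as the Radon measure whose action on $C_c(X \times Y)$ is the distributional tensor product, and this identification has to be compatible with the Fubini estimate above. I would handle this by working locally: test functions of the form $\phi_1 \otimes \phi_2$ are dense, and the uniqueness of the Riesz representation ties the distributional $\mu \otimes \omega_1$ to the product measure. The zero-measure statement therefore needs only $\sigma$-finiteness on compacta.

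As a fallback, I could instead verify the Infimum condition directly with multipliers $\chi(x)$ pulled back to $X \times Y$. The difficulty there is that the test function $\phi(x,y)$ need not split as a product, so one would need a uniform-in-$y$ version of the estimate. That is exactly why the route through alienness is preferable.
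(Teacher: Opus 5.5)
Your proposal is correct and follows the paper's own route: you invoke Theorem~\ref{thm:equivalence_radon} to obtain disjoint concentration sets $M,N\subset X$, observe that $\mu\otimes\omega_1$ and $\nu\otimes\omega_2$ are concentrated on the cylinders $M\times Y$ and $N\times Y$, and then apply the equivalence again on $X\times Y$. Your identity $\abs{\mu\otimes\omega_1}=\abs{\mu}\otimes\abs{\omega_1}$, combined with the Fubini--Tonelli estimate (where the case $\abs{\omega_1}(Y)=\infty$ is handled by exhausting $Y$ by compacts or by the convention $0\cdot\infty=0$), just supplies the concentration step that the paper asserts without detail.
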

\begin{proof}
Attempting to evaluate the functional infimum directly yields split test functions along the $X$-axis, which breaks the scalar bound for complex measures. Instead, we route through the established structural equivalence. By Theorem~\ref{thm:equivalence_radon}, $\mu \perp \nu$ implies $\mu$ and $\nu$ are concentrated on disjoint sets $M$ and $N$ in $X$. Consequently, the tensor products $\mu \otimes \omega_1$ and $\nu \otimes \omega_2$ are strictly concentrated on the disjoint product cylinders $M \times Y$ and $N \times Y$. Because the tensor measures are mutually alien on $X \times Y$, the equivalence theorem guarantees they satisfy the joint Infimum Multiplier condition.
\end{proof}

\begin{theorem}[Linear Additivity]
\label{thm:additivity_radon}
If $\mu \perp \nu_1$ and $\mu \perp \nu_2$, then for any complex scalars $\alpha, \beta \in \mathbb{C}$, $\mu \perp (\alpha\nu_1 + \beta\nu_2)$.
\end{theorem}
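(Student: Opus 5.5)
Following the pattern of Theorems~\ref{thm:module_structure_radon} and~\ref{thm:tensor_invariance_radon}, we will not work with the infimum directly. We route the argument through the measure-theoretic characterisation provided by Theorem~\ref{thm:equivalence_radon}. A direct functional attack is awkward, because the two optimal multipliers $\chi_1$ (for the pair $\mu,\nu_1$) and $\chi_2$ (for the pair $\mu,\nu_2$) need not be compatible. Any naive combination such as $\chi_1\chi_2$ or $\max(\chi_1,\chi_2)$ produces cross terms of the form $\inner{\mu}{(1-\chi_1)\chi_2\phi}$. These are not controlled by either hypothesis individually once $\mu$ is complex, which is the same phenomenon noted in the proof of Theorem~\ref{thm:tensor_invariance_radon}.

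\textbf{Step 1: pass to alien sets.} By Theorem~\ref{thm:equivalence_radon}, $\mu\perp\nu_1$ yields a universally measurable set $M_1$ on which $\mu$ is concentrated, with $\abs{\nu_1}(M_1)=0$. Likewise $\mu\perp\nu_2$ yields $M_2$ with $\mu$ concentrated on $M_2$ and $\abs{\nu_2}(M_2)=0$. Set $M=M_1\cap M_2$ and $N=X\setminus M$. Then $\mu$ is concentrated on $M$, since $\abs{\mu}(X\setminus M)\le\abs{\mu}(X\setminus M_1)+\abs{\mu}(X\setminus M_2)=0$.

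\textbf{Step 2: control the combination.} The combination satisfies the total-variation inequality $\abs{\alpha\nu_1+\beta\nu_2}\le\abs{\alpha}\abs{\nu_1}+\abs{\beta}\abs{\nu_2}$ as measures. Hence
\[
\abs{\alpha\nu_1+\beta\nu_2}(M)\le\abs{\alpha}\abs{\nu_1}(M_1)+\abs{\beta}\abs{\nu_2}(M_2)=0,
\]
so $\alpha\nu_1+\beta\nu_2$ is concentrated on $N$. Universal measurability is preserved under finite intersections and complements, so $M,N$ are an admissible disjoint pair. Statement~(\ref{item:alien}) of Theorem~\ref{thm:equivalence_radon} therefore holds for $\mu$ and $\alpha\nu_1+\beta\nu_2$.

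\textbf{Step 3: return to multipliers.} Applying the implication (\ref{item:alien})$\implies$(\ref{item:infimum}) of Theorem~\ref{thm:equivalence_radon} gives the Infimum Multiplier condition for the pair, that is, $\mu\perp(\alpha\nu_1+\beta\nu_2)$.

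The only delicate point is the bookkeeping of null sets. We must make sure that \emph{concentrated on} is used in the sense $\abs{\mu}(X\setminus M)=0$ for the total variation, so that complex cancellations in $\alpha\nu_1+\beta\nu_2$ cannot cause trouble. This is handled by the subadditivity of total variation in Step 2. Local finiteness and the passage between the local Radon setting and global sets do not cause difficulty, since all inequalities are between positive measures and $\sigma$-additivity suffices. By Theorem~\ref{thm:symmetry_radon}, the same result holds with the roles of the two arguments exchanged.
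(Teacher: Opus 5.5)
Your proposal is correct and takes essentially the same route as the paper: you pass through Theorem~\ref{thm:equivalence_radon} to alien sets, take $M = M_1 \cap M_2$, observe that $\alpha\nu_1+\beta\nu_2$ is concentrated on $X\setminus M$, and then return to the Infimum Multiplier condition. Your explicit appeal to the subadditivity $\abs{\alpha\nu_1+\beta\nu_2}\le\abs{\alpha}\abs{\nu_1}+\abs{\beta}\abs{\nu_2}$ makes rigorous the paper's one-line assertion that the combination is concentrated on $(X\setminus M_1)\cup(X\setminus M_2)$.
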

\begin{proof}
By the equivalence established in Theorem~\ref{thm:equivalence_radon}, $\mu \perp \nu_1$ and $\mu \perp \nu_2$ imply that $\mu$ is mutually alien to both $\nu_1$ and $\nu_2$. Therefore, there exist universally measurable sets $M_1, M_2$ such that $\mu$ is concentrated on $M_1 \cap M_2$, while $\nu_1$ is concentrated on $X \setminus M_1$ and $\nu_2$ is concentrated on $X \setminus M_2$. The linear combination $\alpha\nu_1 + \beta\nu_2$ is strictly supported on the union $(X \setminus M_1) \cup (X \setminus M_2) = X \setminus (M_1 \cap M_2)$. Because the support sets remain disjoint, the superposition remains mutually alien to $\mu$, satisfying the Infimum Multiplier condition.
\end{proof}

\begin{theorem}[Subordination via Absolute Continuity]
\label{thm:subordination_radon}
Let $\omega \in \D_0'(X)$ be a Radon measure that is absolutely continuous with respect to $\mu$ ($\omega \ll \mu$). If $\mu \perp \nu$, then $\omega \perp \nu$.
\end{theorem}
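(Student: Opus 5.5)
The plan is to follow the pattern of Theorems~\ref{thm:module_structure_radon} and~\ref{thm:additivity_radon}. I would pass through the structural equivalence of Theorem~\ref{thm:equivalence_radon} rather than manipulate the infimum functional directly.

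First, from $\mu \perp \nu$ (in the Infimum Multiplier sense), Theorem~\ref{thm:equivalence_radon} gives disjoint universally measurable sets $M$ and $N$ with $M \cup N = X$ such that $\mu$ is concentrated on $M$ and $\nu$ on $N$. In terms of total variations, this means $\abs{\mu}(N) = 0$ and $\abs{\nu}(M) = 0$.

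Second, I would transfer concentration from $\mu$ to $\omega$. The hypothesis $\omega \ll \mu$ is to be read as absolute continuity with respect to the total variation: $\abs{\mu}(A) = 0$ implies $\abs{\omega}(A) = 0$. Applying this to $A = N$ gives $\abs{\omega}(N) = 0$, so $\omega$ is concentrated on $M$. The one point needing care is universal measurability of $N$. Since $N$ is $\abs{\mu}$-measurable with $\abs{\mu}(N) = 0$, I would choose a Borel set $B \supset N$ with $\abs{\mu}(B) = 0$. Then $\abs{\omega}(B) = 0$ by absolute continuity, and hence $\abs{\omega}(N) = 0$, so $N$ is also $\abs{\omega}$-null. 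Alternatively, Radon--Nikod\'ym gives $d\omega = h\, d\abs{\mu}$ with $h \in L^1_{\mathrm{loc}}(\abs{\mu})$, so $\abs{\omega} = \abs{h}\,\abs{\mu}$ vanishes on $N$ directly.

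Third, $\omega$ is concentrated on $M$ and $\nu$ on the disjoint set $N$, so $\omega$ and $\nu$ are mutually alien. Applying Theorem~\ref{thm:equivalence_radon} in reverse, (\ref{item:alien}) $\implies$ (\ref{item:infimum}), then gives $\omega \perp \nu$ in the Infimum Multiplier sense.

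The main obstacle is only bookkeeping: fixing the meaning of $\omega \ll \mu$ for complex (locally finite) Radon measures, and making sure the null set $N$ is handled measurably for $\abs{\omega}$. A direct multiplier argument, reusing for $\omega$ the multiplier $\chi$ that works for $\mu$, would fail, because $\abs{\inner{\mu}{\chi\phi}}$ small does not control $\abs{\inner{\omega}{\chi\phi}}$ when cancellations occur. This is why I route through alienness.
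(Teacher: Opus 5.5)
Your proposal is correct and follows essentially the same route as the paper. Both arguments pass through Theorem~\ref{thm:equivalence_radon} to get the alien partition $M \cup N = X$, use $\omega \ll \mu$ to conclude $\abs{\omega}(N) = 0$, and apply the equivalence in reverse; your extra step handling the $\abs{\omega}$-measurability of $N$ (via a Borel $\abs{\mu}$-null hull or the Radon--Nikod\'ym density) supplies a detail the paper leaves implicit.
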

\begin{proof}
By Theorem~\ref{thm:equivalence_radon}, $\mu \perp \nu$ is equivalent to mutual alienness, so there exist disjoint universally measurable sets $M$ and $N$ with $M \cup N = X$ on which $\mu$ and $\nu$ are respectively concentrated. Since $\omega \ll \mu$, the measure $\omega$ is negligible on every $\mu$-negligible set; in particular $\abs{\omega}(N) = 0$, so $\omega$ is concentrated on $M$. Thus $\omega$ and $\nu$ are concentrated on the disjoint sets $M$ and $N$, hence mutually alien, and the equivalence theorem gives $\omega \perp \nu$.
\end{proof}

\begin{theorem}[Annihilation of Cross-Measures]
\label{thm:cross_measure_annihilation}
Let $\mu, \nu \in \D_0'(X)$ be positive Radon measures generated as the diagonal defect measures of bounded sequences in $L^2$. Let $\gamma \in \D_0'(X)$ be their associated complex cross-measure. By the polarization identity, $\gamma$ natively satisfies the Cauchy-Schwarz bound $\abs{\gamma(E)}^2 \le \mu(E)\nu(E)$ for any Borel set $E \subset X$. If $\mu \perp \nu$, then the cross-measure vanishes identically: $\gamma \equiv 0$.
\end{theorem}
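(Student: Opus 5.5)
The plan is to route through Theorem~\ref{thm:equivalence_radon}, as the preceding structural results do. From $\mu \perp \nu$ we obtain disjoint universally measurable sets $M$ and $N$ with $M \cup N = X$, such that $\mu$ is concentrated on $M$ and $\nu$ on $N$. Since both are positive measures, this means $\mu(N) = 0$ and $\nu(M) = 0$. Because $\gamma$ is a complex Radon measure, it suffices to show that $\gamma(E) = 0$ for every Borel set $E$.

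\textbf{Key step: the Cauchy--Schwarz bound forces $\gamma$ to vanish on each piece.} For a Borel set $E$, split $E = (E \cap M) \cup (E \cap N)$. After replacing $M$ and $N$ by Borel subsets of full measure (possible by universal measurability, with respect to the completion of $\mu + \nu + \abs{\gamma}$), both pieces are Borel. The hypothesis then gives
\begin{equation*}
\abs{\gamma(E \cap M)}^2 \le \mu(E \cap M)\,\nu(E \cap M) \le \mu(E\cap M)\,\nu(M) = 0,
\end{equation*}
and symmetrically $\abs{\gamma(E \cap N)}^2 \le \mu(N)\,\nu(E\cap N) = 0$. Additivity of $\gamma$ yields $\gamma(E) = 0$, so $\gamma \equiv 0$.

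\textbf{Main obstacle.} The delicate point is the measurability bookkeeping. The sets $M$ and $N$ are only universally measurable, while the bound is stated for Borel sets. I would handle this by choosing a single finite positive measure $\lambda = \mu + \nu + \abs{\gamma}$ and taking a Borel set $B \subset M$ with $\lambda(M \setminus B) = 0$. Then $\mu$ is concentrated on $B$, $\nu$ on $X \setminus B$, and $\abs{\gamma}$ loses nothing. Running the argument with $B$ and $X\setminus B$ makes every set involved Borel.

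If the paper intends $\gamma$ only as a distribution of order zero, the bound should be read on the associated Radon measure, which is justified by Riesz representation. Locally finite, rather than finite, measures would be handled by exhausting $X$ with compacts.
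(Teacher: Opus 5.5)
Your proof is correct and follows the paper's route: you pass through Theorem~\ref{thm:equivalence_radon} to an alien partition $M \cup N = X$ with $\mu(N) = \nu(M) = 0$, and then let the Cauchy--Schwarz bound annihilate $\gamma$ on each piece. The paper first upgrades the set-wise bound to the total-variation estimate $\abs{\gamma}(A) \le \mu(A)^{1/2}\nu(A)^{1/2}$ (via finite partitions and discrete Cauchy--Schwarz) and evaluates it on $M$ and $N$; you apply the set-wise bound directly to $E \cap M$ and $E \cap N$ using monotonicity, which makes that upgrade unnecessary, and you also justify the Borel replacement of the universally measurable sets, which the paper simply asserts.
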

\begin{proof}
While one could evaluate $\gamma$ against the optimal multiplier $\chi$, the result follows absolutely from the structural equivalences established in Theorem~\ref{thm:equivalence_radon}. Because $\mu \perp \nu$, the measures are mutually alien. There exists a strict partition of the manifold into disjoint Borel sets $M$ and $N$ ($M \cup N = X$) such that $\mu(N) = 0$ and $\nu(M) = 0$. 

Evaluating the total variation of the cross-measure over this partition requires upgrading the set-wise Cauchy--Schwarz bound to the total variation. For any Borel set $A$ and any finite Borel partition $\{E_i\}$ of $A$, the pointwise bound together with the discrete Cauchy--Schwarz inequality gives
\begin{equation*}
    \sum_i \abs{\gamma(E_i)} \le \sum_i \mu(E_i)^{1/2}\nu(E_i)^{1/2} \le \Big(\sum_i \mu(E_i)\Big)^{1/2}\Big(\sum_i \nu(E_i)\Big)^{1/2} = \mu(A)^{1/2}\nu(A)^{1/2}.
\end{equation*}
Taking the supremum over all such partitions yields $\abs{\gamma}(A) \le \mu(A)^{1/2}\nu(A)^{1/2}$. Applying this over the alien partition:
\begin{align*}
    \abs{\gamma}(M) &\le \mu(M)^{1/2} \nu(M)^{1/2} = \mu(M)^{1/2} \cdot 0 = 0, \\
    \abs{\gamma}(N) &\le \mu(N)^{1/2} \nu(N)^{1/2} = 0 \cdot \nu(N)^{1/2} = 0.
\end{align*}
Because $\abs{\gamma}(X) = \abs{\gamma}(M) + \abs{\gamma}(N) = 0$, the cross-measure is globally annihilated. Consequently, sequences with orthogonal phase-space concentrations decouple in the physical limit.
\end{proof}

\begin{theorem}[Conservation under Hamiltonian Flow]
\label{thm:hamiltonian_conservation}
Let $H_p$ be a smooth Hamiltonian vector field generating a global flow $\Phi_t: X \to X$ for $t \in \R$. Let initial measures $\mu_0, \nu_0 \in \D_0'(X)$ propagate along the flow such that $\mu_t = (\Phi_t)_* \mu_0$ and $\nu_t = (\Phi_t)_* \nu_0$. If $\mu_0 \perp \nu_0$, then $\mu_t \perp \nu_t$ for all $t$.
\end{theorem}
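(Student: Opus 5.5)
The statement is a direct corollary of Coordinate Invariance (Theorem~\ref{thm:coord_invariance_radon}) applied to the time-$t$ map. Fix $t \in \R$. Because $H_p$ is a smooth vector field generating a global flow, the map $\Phi_t : X \to X$ is smooth, with smooth inverse $\Phi_{-t}$ by the group law $\Phi_t \circ \Phi_{-t} = \Phi_0 = \mathrm{id}$. So $\Phi_t$ is a diffeomorphism of $X$ onto itself, which is exactly the setting of Theorem~\ref{thm:coord_invariance_radon} with $Y = X$.

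Applying that theorem to $\mu_0 \perp \nu_0$ gives $(\Phi_t)_*\mu_0 \perp (\Phi_t)_*\nu_0$, which is $\mu_t \perp \nu_t$ by definition. Since $t$ was arbitrary, the claim holds for all $t$. The Hamiltonian structure is never used; only the diffeomorphism property of the flow matters.

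As a robustness check, independent of how the proof of Theorem~\ref{thm:coord_invariance_radon} handles multipliers, I would also route through Theorem~\ref{thm:equivalence_radon}. Orthogonality of $\mu_0$ and $\nu_0$ means they are concentrated on disjoint universally measurable sets $M$ and $N$. Since $\Phi_t$ is a Borel bijection, the sets $\Phi_t(M)$ and $\Phi_t(N)$ are disjoint. For universal measurability: a set that is $\abs{\lambda\circ\Phi_t}$-measurable for every Radon $\lambda$ is carried by $\Phi_t$ to a set measurable for every pushed-forward Radon measure. Every Radon measure on $X$ arises as such a pushforward, namely of $(\Phi_{-t})_*\lambda$, so universal measurability is preserved. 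Moreover
\begin{equation*}
    \abs{\mu_t}\big(X\setminus \Phi_t(M)\big) = \abs{\mu_0}(X\setminus M) = 0,
\end{equation*}
using that the total variation commutes with pushforward under a bijection, and similarly for $\nu_t$. Hence $\mu_t$ and $\nu_t$ are mutually alien, and the equivalence theorem returns the multiplier orthogonality.

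The only point needing care is that $\mu_t$ is again a Radon measure, so that it lies in $\D_0'(X)$. This holds because pullback by a diffeomorphism preserves $C_c(X)$ and the sup norm, and I would state this in one line. No step presents a real obstacle.
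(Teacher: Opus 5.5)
Your proof is correct and matches the paper's own argument: the paper likewise notes that each time-$t$ map $\Phi_t$ is a smooth diffeomorphism of $X$ onto itself and then applies Coordinate Invariance (Theorem~\ref{thm:coord_invariance_radon}) directly. Your additional cross-check through mutual alienness and Theorem~\ref{thm:equivalence_radon} is also sound, but it is not needed.
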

\begin{proof}
By the fundamental properties of dynamical systems, the mapping operator of the flow $\Phi_t$ is a smooth diffeomorphism of the manifold $X$ onto itself for any fixed time $t \in \R$. By the Coordinate Invariance established in Theorem~\ref{thm:coord_invariance_radon}, the functional infimum bounding the measures is strictly invariant under diffeomorphic pullbacks of the test space. Therefore, the orthogonality of the measures is conserved along the integral curves of the vector field. 
\end{proof}

\begin{theorem}[Pythagorean Decoupling of Local Energy]
\label{thm:pythagorean_decoupling}
Let $(u_n)$ and $(v_n)$ be weakly converging sequences in $L_{loc}^2(X)$ that generate the positive defect measures $\mu$ and $\nu$, respectively. If $\mu \perp \nu$, then the defect measure $\omega$ generated by the superposed sequence $(u_n + v_n)$ satisfies the exact additive splitting:
\begin{equation*}
    \omega = \mu + \nu.
\end{equation*}
\end{theorem}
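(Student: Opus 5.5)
Passing to a common subsequence, I assume the sequences generate defect measures jointly: with $u_n \rightharpoonup u$, $v_n \rightharpoonup v$, the matrix measure of $(u_n - u, v_n - v)$ has diagonal entries $\mu, \nu$ and off-diagonal cross-measure $\gamma$. (Along the diagonal subsequence $\mu$ and $\nu$ are unchanged, since they are the limits of the full sequences; if $\omega$ is only defined along a subsequence, every subsequential limit is shown to equal $\mu+\nu$.) The strategy is to expand $\abs{u_n + v_n - u - v}^2$ quadratically, identify $\omega$ as $\mu + \nu + 2\,\text{Re}\,\gamma$, and then use Theorem~\ref{thm:cross_measure_annihilation} to remove the cross term.

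\textbf{Step 1 (polarization).} For $\phi \in C_c(X)$, $\phi \ge 0$, write $w_n = u_n - u$ and $z_n = v_n - v$. Then
\begin{equation*}
\int_X \phi \abs{w_n + z_n}^2 \, dx = \int_X \phi \abs{w_n}^2 \, dx + \int_X \phi \abs{z_n}^2 \, dx + 2\,\text{Re} \int_X \phi \, w_n \overline{z_n} \, dx .
\end{equation*}
The $L^2_{loc}$ bounds make $w_n \overline{z_n}$ bounded in $L^1_{loc}$, so a further subsequence gives $w_n \overline{z_n} \rightharpoonup \gamma$ weakly-$*$ as complex Radon measures. Passing to the limit yields $\omega = \mu + \nu + 2\,\text{Re}\,\gamma$.

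\textbf{Step 2 (Cauchy--Schwarz for $\gamma$).} The hypothesis of Theorem~\ref{thm:cross_measure_annihilation} requires $\abs{\gamma(E)}^2 \le \mu(E)\nu(E)$. From $\abs{\int \phi\, w_n \overline{z_n}} \le (\int \phi \abs{w_n}^2)^{1/2} (\int \phi \abs{z_n}^2)^{1/2}$ for $\phi \ge 0$, one gets $\abs{\inner{\gamma}{\phi}}^2 \le \inner{\mu}{\phi}\inner{\nu}{\phi}$. To pass to Borel sets, approximate $1_E$: for compact $K$, take $\phi_j \downarrow 1_K$; for open sets, use inner regularity; for general $E$, use outer regularity of $\abs{\gamma}+\mu+\nu$. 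This approximation is the main technical obstacle, because the bound is not linear in $\phi$. I would avoid it by proving the total-variation bound directly. For disjoint compact sets $K_i$ with disjoint neighbourhoods carrying cut-offs $\phi_i$, sum the inequality and apply discrete Cauchy--Schwarz, as in the proof of Theorem~\ref{thm:cross_measure_annihilation}. This gives $\abs{\gamma}(A) \le \mu(A)^{1/2}\nu(A)^{1/2}$ for open $A$, and then for Borel $A$ by outer regularity.

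\textbf{Step 3 (conclusion).} By Theorem~\ref{thm:equivalence_radon}, $\mu \perp \nu$ means $\mu$ and $\nu$ are mutually alien. Theorem~\ref{thm:cross_measure_annihilation}, or directly the total-variation bound of Step 2 on the alien partition $M, N$, then gives $\gamma \equiv 0$. Hence $\omega = \mu + \nu$. Because the limit does not depend on the subsequence chosen, the full superposed sequence generates $\omega = \mu + \nu$.
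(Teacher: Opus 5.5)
Your proposal is correct and follows the paper's route: you expand the defect measure of the superposed sequence sesquilinearly as $\omega=\mu+\nu+\gamma+\overline{\gamma}$, then remove the cross term using Theorem~\ref{thm:cross_measure_annihilation} on the alien partition furnished by Theorem~\ref{thm:equivalence_radon}. Beyond the paper's two-line argument, you also handle the subsequence bookkeeping and actually derive the bound $\abs{\gamma}(A)\le\mu(A)^{1/2}\nu(A)^{1/2}$, which the paper simply asserts as a consequence of polarization.
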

\begin{proof}
By the sesquilinear properties of pseudo-differential limits, the measure generated by the sum of two sequences expands to $\omega = \mu + \nu + \gamma + \gamma^*$, where $\gamma$ is the associated cross-measure and $\gamma^*$ is its complex conjugate. Because $\mu \perp \nu$, Theorem~\ref{thm:cross_measure_annihilation} guarantees that $\gamma \equiv 0$ and $\gamma^* \equiv 0$. The cross-interference vanishes, leaving the local energy densities to decouple into $\omega = \mu + \nu$.
\end{proof}

\begin{theorem}[Self-Orthogonality and Strong Compactness]
\label{thm:self_orthogonality}
Let $\mu \in \D_0'(X)$ be a positive Radon measure generated by a sequence $(u_n)$ converging weakly to zero in $L_{loc}^2(X)$. The measure is self-orthogonal ($\mu \perp \mu$) if and only if $u_n \to 0$ strongly in $L_{loc}^2(X)$.
\end{theorem}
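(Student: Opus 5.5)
Self-orthogonality reduces, via Theorem~\ref{thm:equivalence_radon}, to $\mu = 0$; the equivalence with strong convergence then follows from the defining property of the defect measure. Applying the equivalence with $\nu = \mu$, the relation $\mu \perp \mu$ says that $\mu$ is mutually alien to itself. Concretely, there are disjoint universally measurable sets $M, N$ with $M \cup N = X$, $\abs{\mu}(N) = 0$ (since $\mu$ is concentrated on $M$) and $\abs{\mu}(M) = 0$ (since $\mu$ is concentrated on $N$). Hence $\abs{\mu}(X) = 0$, i.e.\ $\mu \equiv 0$. Conversely, $\mu = 0$ is trivially alien to itself: take $M = X$, $N = \emptyset$. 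So $\mu \perp \mu \iff \mu = 0$.

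An equally short route avoids the equivalence theorem. In the Infimum Multiplier condition with $\nu = \mu$ and $\phi \ge 0$, positivity of $\mu$ gives
\begin{equation*}
\abs{\inner{\mu}{\chi\phi}} + \abs{\inner{\mu}{(1-\chi)\phi}} = \inner{\mu}{\chi\phi} + \inner{\mu}{(1-\chi)\phi} = \inner{\mu}{\phi}
\end{equation*}
for every admissible $\chi$. The infimum being zero therefore forces $\inner{\mu}{\phi} = 0$ for all nonnegative $\phi$, hence $\mu = 0$. I would include this as the cleaner argument, since it uses positivity directly.

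Next I relate $\mu = 0$ to strong convergence. I interpret $\mu$ as the defect measure of $(u_n)$, passing to a subsequence if necessary so that it is defined. Whether this is the H-measure on $X \times S^{d-1}$ or the spatial defect measure, its defining property with a symbol $\varphi \otimes 1$ (no frequency dependence, so the pseudo-differential operator is just multiplication) gives
\begin{equation*}
\lim_{n} \int_X \abs{\varphi}^2 \abs{u_n}^2 \, dx = \inner{\mu}{\abs{\varphi}^2 \otimes 1}, \qquad \varphi \in C_c^\infty(X).
\end{equation*}
If $\mu = 0$, the right side vanishes for every $\varphi$. Choosing $\varphi \equiv 1$ on a given compact $K$ yields $\norm{u_n}_{L^2(K)} \to 0$, which is strong convergence in $L^2_{loc}$. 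Conversely, if $u_n \to 0$ strongly in $L^2_{loc}$, the left side tends to $0$ for every $\varphi$, so $\mu$ vanishes on all nonnegative test functions of the form $\abs{\varphi}^2 \otimes 1$. Positivity of $\mu$ together with density of such tensor products then gives $\mu = 0$; more simply, strong convergence makes all pseudo-differential pairings $\inner{A u_n}{u_n}$ tend to zero, so the defect measure is zero.

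The main obstacle is the subsequence issue. The defect measure is only defined along a subsequence, and strong convergence of the whole sequence must be derived from it. If $\mu$ is regarded as the measure of the full sequence, I would use a standard argument: every subsequence has a further subsequence whose defect measure is again $\mu = 0$, so the norms converge to zero along it, and therefore along the whole sequence. I would state this convention explicitly and leave the remaining steps as routine.
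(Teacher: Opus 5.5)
Your proof is correct, and your preferred positivity-based argument is exactly the paper's: take $\nu=\mu$ and $\phi\ge 0$, so the multiplier $\chi$ drops out and the infimum equals $\inner{\mu}{\phi}$; self-orthogonality then forces $\mu=0$, and the defining property of the defect measure links this to strong $L^2_{loc}$ convergence. You handle that last step more carefully than the paper, which only cites the ``fundamental theorem of defect measures'': you test against $\abs{\varphi}^2\otimes 1$ and state explicitly that $\mu$ is generated by the full sequence, which is needed for the converse to hold.
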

\begin{proof}
We evaluate the Infimum Multiplier condition for $\mu$ against itself. For any non-negative test function $\phi \in C_c^\infty(X)$, the functional becomes:
\begin{equation*}
    \inf_{\chi} \Big( \inner{\mu}{\chi \phi} + \inner{\mu}{(1 - \chi) \phi} \Big) = \inf_{\chi} \inner{\mu}{(\chi + 1 - \chi) \phi} = \inner{\mu}{\phi}.
\end{equation*}
Because the sum of the partition is identically $1$, the choice of multiplier $\chi$ drops out entirely. The infimum evaluates to exactly zero if and only if $\inner{\mu}{\phi} = 0$ for all valid $\phi$, which implies $\mu \equiv 0$. By the fundamental theorem of defect measures, the generating sequence lacks both oscillation and concentration limits, which is strictly equivalent to strong convergence $u_n \to 0$ in $L_{loc}^2(X)$.
\end{proof}

\subsection{Extension to Operator-Valued Measures and Polarization}
\label{sec:operator_valued}

The robustness of the Infimum Multiplier for order-zero distributions extends beyond scalar geometry into the realm of vector-valued measures. In the analysis of PDE systems (e.g., Maxwell or Dirac equations), microlocal defect measures are valued in the trace-class operators of a Hilbert space $H$. In this regime, distributions may occupy the exact same phase-space coordinates provided their internal states, or \textit{polarizations}, are orthogonal.

Let $\mu, \nu \in \mathcal{M}(X, \mathcal{L}^1(H))$ be positive operator-valued Radon measures. By the Radon-Nikod\'ym theorem, we can decompose them relative to a dominating positive scalar measure $\lambda$ (e.g., $\lambda = tr(\mu) + tr(\nu)$) such that $\mu = F(x)\lambda$ and $\nu = G(x)\lambda$, where $F(x)$ and $G(x)$ are positive semi-definite trace-class operators. Polarization orthogonality is naturally characterised via the pointwise trace of these density matrices, in the spirit of G\'erard's vector-valued microlocal defect measures \cite{gerard1991}.

\begin{definition}[Pointwise Polarization Orthogonality]
\label{def:gerard_ortho}
Two positive operator-valued measures $\mu$ and $\nu$ are orthogonal ($\mu \perp \nu$) if $tr(F(x)G(x)) = 0$ for $\lambda$-almost every $x \in X$.
\end{definition}

While this pointwise definition is foundational, it steps outside the purely functional duality bracket. We can recover this exact separation variationally by upgrading our scalar cut-off to a smooth, operator-valued polarization filter.

\begin{definition}[Operator-Valued Infimum Multiplier]
\label{def:op_infimum}
Two operator-valued Radon measures $\mu, \nu \in \D_0'(X, \mathcal{L}^1(H))$ are orthogonal if, for every scalar test function $\phi \in C_c^\infty(X)$, the following holds:
\begin{equation*}
    \inf_{\substack{\chi \in C^\infty(X, \mathcal{L}(H)) \\ 0 \le \chi \le I}} \Big( \abs{\inner{\mu}{\chi \phi}} + \abs{\inner{\nu}{(I - \chi) \phi}} \Big) = 0.
\end{equation*}
where the pairing computes the spatial integral of the trace: $\inner{\mu}{\Phi} = \int_X tr(\Phi(x) d\mu(x))$.
\end{definition}

Here, the bound $0 \le \chi \le I$ ensures $\chi(x)$ is a positive semi-definite operator bounded by the identity. Because Radon measures do not evaluate spatial derivatives, the matrix $\chi(x)$ can smoothly rotate its eigenspaces to filter polarizations without triggering a derivative divergence.

\begin{theorem}[Equivalence of Polarization]
\label{thm:equiv_polarization}
Let $\mu, \nu$ be positive operator-valued Radon measures. The Operator-Valued Infimum Multiplier condition holds if and only if $tr(F(x)G(x)) = 0$ almost everywhere.
\end{theorem}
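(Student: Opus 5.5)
The plan mirrors the proof of Theorem~\ref{thm:equivalence_radon}, with scalar densities replaced by the density operators $F,G$ and scalar phases replaced by positivity. Throughout I write $\mu = F\lambda$ and $\nu = G\lambda$ with $F(x),G(x)\ge 0$ trace-class, and $\lambda = tr(\mu)+tr(\nu)$, so that $tr F + tr G = 1$ holds $\lambda$-a.e. The key structural fact is that for a positive semi-definite multiplier $0\le\chi\le I$ and a nonnegative test function $\phi$, both pairings are nonnegative reals:
\begin{equation*}
\inner{\mu}{\chi\phi} = \int_X \phi\, tr(\chi F)\,d\lambda \ge 0.
\end{equation*}
Hence the absolute values in Definition~\ref{def:op_infimum} can be dropped. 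For a general complex $\phi$, I first reduce to $\phi \ge 0$ by bounding $\abs{\inner{\mu}{\chi\phi}} \le \inner{\mu}{\chi\abs{\phi}}$. This uses $\abs{tr(\chi F)} = tr(\chi F)$ together with a smooth majorant $\psi \ge \abs{\phi}$, $\psi\in C_c^\infty$. This reduction handles the sufficiency direction.

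\textbf{Sufficiency ($tr(FG)=0$ a.e. $\Rightarrow$ infimum zero).} From $tr(FG) = \norm{F^{1/2}G^{1/2}}_{HS}^2 = 0$ I get $F(x)G(x)=0$, so the ranges of $F(x)$ and $G(x)$ are orthogonal. The ideal multiplier is the measurable projection $P(x)$ onto $\overline{\mathrm{ran}\,G(x)}$, for which $tr(PF)=0$ and $tr((I-P)G)=0$. Since $P$ is only measurable, I approximate it. By Lusin's theorem applied to the strongly measurable, $\mathcal{L}(H)$-valued $P$ (restricting to a finite-dimensional truncation of $H$ if needed), there is a compact $K\subset\mathrm{supp}\,\phi$ on which $P$ is continuous and $\int_{\mathrm{supp}\phi\setminus K} \phi\, d\lambda < \eps$. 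I then mollify or extend $P|_K$ to a smooth $\chi$ with $0\le\chi\le I$; the order constraint survives because convex combinations of projections stay between $0$ and $I$. Dominated convergence then makes $\int \phi\,[tr(\chi F)+tr((I-\chi)G)]\,d\lambda$ smaller than $C\eps$. Here $tr(\chi F)\le tr F$, so all integrands are dominated by $\phi\,(trF+trG)\in L^1$. This approximation step is the main technical obstacle. In infinite dimensions the map $x\mapsto P(x)$ need not be norm-continuous anywhere, so I would instead approximate in the weaker sense that $tr(\chi_k F)\to tr(PF)$ pointwise a.e. To do this I take a finite-rank truncation $F_N, G_N$ using an orthonormal basis, with tails controlled by $tr F - tr F_N\to 0$ in $L^1(\lambda)$, and run the Lusin/mollification argument in finite dimensions. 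There the projection onto the range is continuous on the strata of constant rank.

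\textbf{Necessity (infimum zero $\Rightarrow$ $tr(FG)=0$ a.e.).} I argue by contradiction, as in the last step of Theorem~\ref{thm:equivalence_radon}. Suppose $S=\{tr(FG)>0\}$ has $\lambda(S)>0$. The pointwise inequality I need is that for every $0\le\chi\le I$,
\begin{equation*}
tr(\chi F)+tr((I-\chi)G) \ge c(x) > 0 \quad\text{on } S,
\end{equation*}
with $c(x)$ independent of $\chi$. I obtain it by taking $\lambda_{\max}(F),\lambda_{\max}(G)$ to be the top eigenvalues of $F$ and $G$ and estimating
\begin{equation*}
tr(FG) = tr(G^{1/2}\chi^{1/2}F\chi^{1/2}G^{1/2}) + tr(G^{1/2}(I-\chi)^{1/2}F(I-\chi)^{1/2}G^{1/2}) \le \norm{G}\,tr(\chi F) + \norm{F}\,tr((I-\chi)G).
\end{equation*}
Both terms are bounded by cyclicity and positivity; the first identity is only heuristic when $\chi$ and $F$ fail to commute, so I would instead use the cleaner bound $tr(\chi F)\ge tr(\chi F)$ directly. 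Specifically, I use $tr(FG)\le \norm{G}\,tr(F^{1/2}\chi F^{1/2}) + \norm{F}\,tr(G^{1/2}(I-\chi)G^{1/2})$, which follows by splitting $G = G^{1/2}(\chi+(I-\chi))G^{1/2}$ inside $tr(FG)$ after an operator Cauchy--Schwarz step. This yields $c(x) = tr(FG)/\max(\norm{F},\norm{G},1)$. Verifying this inequality rigorously is the second delicate point.

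With the pointwise bound in hand, I choose a compact $E\subset S$ with $\lambda(E)>0$ and $c\ge c_0>0$ on $E$, and a cutoff $0\le\phi\le1$ with $\phi=1$ on $E$ and small tails, exactly as in Theorem~\ref{thm:equivalence_radon}. Positivity then means no tail subtraction is even needed, and
\begin{equation*}
\inner{\mu}{\chi\phi}+\inner{\nu}{(I-\chi)\phi}\ge c_0\lambda(E)
\end{equation*}
holds uniformly in $\chi$, which is the desired contradiction.
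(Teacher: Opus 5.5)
Your proof has a genuine gap in the necessity direction. The pointwise inequality $tr(FG)\le\norm{G}\,tr(\chi F)+\norm{F}\,tr\big((I-\chi)G\big)$ is false, and your sketched justification cannot prove it. The bound ``$tr(\chi F)\ge tr(\chi F)$'' is vacuous. Splitting $G=G^{1/2}(\chi+(I-\chi))G^{1/2}$ produces $tr(FG^{1/2}\chi G^{1/2})$, in which $\chi$ is sandwiched by $G^{1/2}$ and so is not controlled by $tr(\chi F)$.

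For a counterexample, take $H=\mathbb{C}^2$, $F=e_1e_1^*$ and $G=vv^*$ with $v=(\cos\theta,\sin\theta)$, $\theta=\pi/6$. Then $\norm{F}=\norm{G}=1$, $tr(FG)=\tfrac34$, and $F-G$ has eigenvalues $\pm\tfrac12$. Let $\chi$ be the spectral projection of $F-G$ for $-\tfrac12$. The right-hand side is then $tr(G)+tr\big(\chi(F-G)\big)=\tfrac12<\tfrac34$. Both sides are homogeneous of degree two in $(F,G)$, so your normalisation $tr F+tr G=1$ does not rescue it.

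The repair is a Hilbert--Schmidt Cauchy--Schwarz estimate. Write $tr(F\chi G)=tr(A^*B)$ with $A=\chi^{1/2}F^{1/2}$ and $B=\chi^{1/2}GF^{1/2}$. Using $G\chi G\le G^2\le\norm{G}\,G$ gives $\abs{tr(F\chi G)}\le\big(tr(\chi F)\big)^{1/2}\big(\norm{G}\,tr(FG)\big)^{1/2}$. Symmetrically, $\abs{tr(F(I-\chi)G)}\le\big(tr((I-\chi)G)\big)^{1/2}\big(\norm{F}\,tr(FG)\big)^{1/2}$. Add these, divide by $tr(FG)^{1/2}>0$ on $S$, and square to get $tr(FG)\le 2\big(\norm{G}\,tr(\chi F)+\norm{F}\,tr((I-\chi)G)\big)$. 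Hence $c(x)=tr(FG)/\big(2\max(\norm{F},\norm{G})\big)\ge\tfrac12\,tr(FG)$ under your normalisation. The rest of your contradiction argument, where positivity removes the tails, then goes through unchanged. Your sufficiency direction is essentially the paper's argument: measurable range projection, Lusin, extension into $[0,I]$, mollification, and an absolute-continuity estimate.

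With this correction, your necessity proof takes a genuinely different route. The paper computes the exact pointwise infimum $\min_{0\le\chi\le I}tr\big(\chi F+(I-\chi)G\big)=\tfrac12\,tr\big(F+G-\abs{F-G}\big)$, attained at $\mathbf{1}_{(-\infty,0)}(F-G)$, and then shows it vanishes exactly when $FG=0$. That identifies the optimal multiplier, but it needs the spectral calculus of $F-G$ and a separate measurability claim for $m(F(\cdot),G(\cdot))$. Your corrected bound is more elementary and quantitative: it bounds the infimum below by an explicit multiple of $tr(FG)$, so positivity on $S$ and measurability are immediate.

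You are right that norm-valued Lusin for $\mathcal{L}(H)$-valued $P$ is delicate when $\dim H=\infty$; the paper passes over this. Your finite-rank truncation does not fix it, because compressing by $Q_N$ destroys $FG=0$. For example, $f=(e_1+e_{N+1})/\sqrt2$ and $g=(e_1-e_{N+1})/\sqrt2$ satisfy $f\perp g$, yet $Q_Nf=Q_Ng$. Instead, run Lusin and the Dugundji extension in the weak operator topology on $[0,I]$, which is compact metrizable for separable $H$. Mollifying then gives norm-smooth $[0,I]$-valued multipliers converging pointwise in that topology. Since $T\mapsto tr(TF)$ is weak-operator continuous on bounded sets for trace-class $F$, this yields the a.e. convergence $tr(\chi_kF)\to tr(PF)=0$ that your dominated-convergence step needs.
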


\begin{proof}
\textbf{Forward Direction ($tr(FG)=0 \implies$ Infimum condition):} \\
Assume $tr(F(x)G(x)) = 0$ for $\lambda$-a.e. $x$. Since $F(x)$ and $G(x)$ are positive semi-definite, this implies their ranges are strictly orthogonal subspaces in $H$. Let $P(x)$ be the orthogonal projection matrix onto the closure of the range of $G(x)$.

Consequently, $P(x)G(x) = G(x)$ and $P(x)F(x) = 0$, which implies $(I - P(x))G(x) = 0$. Because $G(x)$ is $\lambda$-measurable, the projector $P(x)$ is a $\lambda$-measurable matrix-valued function with $0 \le P(x) \le I$.

Let $\phi \in C_c^\infty(X)$ and fix $\eps > 0$. Since $\mu, \nu$ are trace-class-valued Radon measures, $tr(F), tr(G) \in L^1(\lambda)$ on the compact set $\operatorname{supp}\phi$, so the finite measure $\rho := (tr(F) + tr(G))\,\abs{\phi}\,\lambda$ is absolutely continuous with respect to $\lambda$; choose $\eta > 0$ so that $\rho(B) < \eps/2$ whenever $\lambda(B \cap \operatorname{supp}\phi) < \eta$. By Lusin's theorem applied to the $\lambda$-measurable map $P(\cdot)$, there is a compact set $K \subseteq \operatorname{supp}\phi$ with $\lambda(\operatorname{supp}\phi \setminus K) < \eta$ on which $P$ is continuous; by the matrix Tietze--Dugundji extension theorem $P|_K$ extends to a continuous map $\widetilde P : X \to \{T : 0 \le T \le I\}$ into the same compact convex operator interval. Mollifying $\widetilde P$ chartwise against a probability kernel yields $\chi \in C^\infty(X, \mathcal{L}(H))$ with $0 \le \chi \le I$ (convexity of $[0,I]$ is preserved by averaging) and $\norm{\chi - \widetilde P}_\infty$ as small as desired; in particular we may arrange $\chi = P$ up to negligible error on $K$. Estimating the trace pairing, and using $\norm{\chi - P} \le 2$ together with $\abs{tr((\chi-P)F)} \le \norm{\chi - P}\, tr(F)$,
\begin{equation*}
    \abs{\inner{\mu}{\chi \phi} - \inner{\mu}{P \phi}} \le \int_{\operatorname{supp}\phi} \norm{\chi - P}\, tr(F)\, \abs{\phi}\, d\lambda \le 2\,\rho\big(\operatorname{supp}\phi \setminus K\big) + o(1) < \frac{\eps}{2},
\end{equation*}
and symmetrically $\abs{\inner{\nu}{(I-\chi) \phi} - \inner{\nu}{(I-P) \phi}} < \eps/2$. Because $\inner{\mu}{P \phi} = \int tr(PF) \phi \, d\lambda = 0$ (as $PF = 0$) and $\inner{\nu}{(I-P)\phi} = \int tr((I-P)G) \phi \, d\lambda = 0$ (as $(I-P)G = 0$), the two reference pairings vanish. Hence $\abs{\inner{\mu}{\chi \phi}} + \abs{\inner{\nu}{(I-\chi)\phi}} < \eps$, and since $\eps > 0$ was arbitrary the infimum is zero.

\vspace{0.2cm}
\noindent\textbf{Reverse Direction (Infimum condition $\implies tr(FG)=0$):} \\
We first record the exact pointwise value of the multiplier infimum. For positive semi-definite $F, G \in \mathcal{L}^1(H)$ and the operator interval $[0, I]$,
\begin{equation}
\label{eq:pointwise_min}
    \min_{0 \le \chi \le I} tr\big(\chi F + (I - \chi) G\big) = \tfrac{1}{2} tr\big(F + G - \abs{F - G}\big) =: m(F,G),
\end{equation}
the minimum being attained at the spectral projector $\chi_\ast = \mathbf{1}_{(-\infty,0)}(F - G)$ onto the negative part of $F - G$. Indeed, $tr\big(\chi F + (I-\chi)G\big) = tr(G) + tr\big(\chi(F-G)\big)$, and minimising the linear functional $\chi \mapsto tr\big(\chi(F-G)\big)$ over $0 \le \chi \le I$ yields $-tr\big((F-G)_-\big)$, attained at $\chi_\ast$; since $(F-G)_- = \tfrac12\big(\abs{F-G} - (F-G)\big)$ and $tr(F-G) = tr F - tr G$, this gives \eqref{eq:pointwise_min}. In particular $m(F,G) \ge 0$, as $tr(\chi F + (I-\chi)G) \ge 0$ for every admissible $\chi$.

Moreover, $m(F,G) = 0$ if and only if $tr(FG) = 0$. Writing $tr(FG) = tr\big((F^{1/2}G^{1/2})(F^{1/2}G^{1/2})^\ast\big) = \norm{F^{1/2}G^{1/2}}_{HS}^2$, the condition $tr(FG)=0$ is equivalent to $F^{1/2}G^{1/2} = 0$, hence to $FG = 0$, i.e. to $\text{Range}\,F \perp \text{Range}\,G$. If the ranges are orthogonal then $\abs{F-G} = F+G$, so $m(F,G)=0$. Conversely, since $m(F,G) = tr(\chi_\ast F) + tr\big((I-\chi_\ast)G\big)$ is a sum of two nonnegative terms, $m(F,G)=0$ forces $tr(\chi_\ast F) = 0$ and $tr\big((I-\chi_\ast)G\big) = 0$; since $F, G \succeq 0$ these give $\chi_\ast F = 0$ and $\chi_\ast G = G$, whence $\text{Range}\,G \subseteq \text{Range}\,\chi_\ast \subseteq \ker F$ and the ranges are orthogonal.

Now suppose the infimum condition holds but, for contradiction, $tr\big(F(x)G(x)\big) > 0$ on a set $E$ of strictly positive $\lambda$-measure. By the equivalence just proved, $m\big(F(x),G(x)\big) > 0$ on $E$; since $m(F(\cdot),G(\cdot))$ is $\lambda$-measurable, there exist a constant $c > 0$ and a compact $K \subseteq E$ with $\lambda(K) > 0$ such that $m\big(F(x),G(x)\big) \ge c$ for all $x \in K$. Fix a nonnegative $\phi \in C_c^\infty(X)$ with $\phi \ge 1$ on $K$. For every admissible $\chi \in C^\infty(X, \mathcal{L}(H))$ with $0 \le \chi \le I$, the integrands $tr(\chi F)\phi$ and $tr\big((I-\chi)G\big)\phi$ are nonnegative, so the absolute values may be dropped; bounding the integrand pointwise from below by \eqref{eq:pointwise_min} and using $m(F,G)\phi \ge 0$ on all of $X$,
\begin{align*}
    \abs{\inner{\mu}{\chi \phi}} + \abs{\inner{\nu}{(I - \chi) \phi}}
    &= \int_X tr\big(\chi F + (I-\chi)G\big)\, \phi \, d\lambda
    \ge \int_X m\big(F,G\big)\, \phi \, d\lambda \\
    &\ge \int_K m\big(F,G\big)\, \phi \, d\lambda \ge c\, \lambda(K) > 0.
\end{align*}
This lower bound is independent of $\chi$, so the infimum is strictly positive, contradicting the hypothesis. Hence $tr\big(F(x)G(x)\big) = 0$ for $\lambda$-a.e. $x$.
\end{proof}

\begin{theorem}[Structural and Physical Inheritance]
\label{thm:inheritance}
Let $\mu, \nu \in \D_0'(X, \mathcal{L}^1(H))$ be positive operator-valued Radon measures such that $\mu \perp \nu$. The orthogonality condition inherits the foundational geometry: it is symmetric, topologically consistent, coordinate-invariant, localizable, invariant under smooth scalar module multiplication, and tensor-product invariant. 

Furthermore, the framework strictly preserves the physical applications established for scalar measures: it guarantees linear additivity, subordination via absolute continuity, the global annihilation of operator-valued cross-measures, conservation under Hamiltonian flow, Pythagorean energy decoupling, and strong compactness via self-orthogonality.
\end{theorem}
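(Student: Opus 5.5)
The plan is to route every property through Theorem~\ref{thm:equiv_polarization}, which turns the variational condition into the pointwise statement $tr(F(x)G(x))=0$ $\lambda$-a.e. Equivalently, $F(x)G(x)=0$, so the closed ranges of $F(x)$ and $G(x)$ are orthogonal. This plays the role that mutual alienness played in the scalar proofs of Section~\ref{sec:structural_props}. Each inherited property is then checked on the densities, and Theorem~\ref{thm:equiv_polarization} is applied backwards. The one technical point throughout is that a change of dominating measure $\lambda\mapsto\lambda'$ rescales $F,G$ by a common nonnegative Radon--Nikod\'ym factor, which preserves $FG=0$. So the condition does not depend on the choice of $\lambda$.

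For the geometric properties I would argue as follows.

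\textbf{Symmetry.} Use $tr(FG)=tr(GF)$, or substitute $\chi\mapsto I-\chi$ directly in Definition~\ref{def:op_infimum}.

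\textbf{Topological consistency.} If the supports are disjoint, $\lambda$ splits into pieces carrying only $F$ or only $G$, so $FG=0$ a.e.

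\textbf{Coordinate invariance.} The pushforward under a diffeomorphism $\Phi$ has densities $F\circ\Phi^{-1}$, $G\circ\Phi^{-1}$ relative to $\Phi_*\lambda$.

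\textbf{Localization.} Restriction keeps the densities.

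\textbf{Module structure.} For scalar $f,g\in C^\infty(X)$, the product $(fF)(gG)=fg\,FG$ vanishes. Positivity is lost when $f,g$ are complex, but the pointwise condition $FG=0$ still makes sense, and the forward direction of Theorem~\ref{thm:equiv_polarization} only used $PF=0$ and $(I-P)G=0$. I would note this explicitly rather than invoke positivity.

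\textbf{Tensor products.} The densities become $F(x)\otimes \omega_1$-type products on $X\times Y$. Interpreting the scalar factors $\omega_1,\omega_2$ through their own Radon--Nikod\'ym densities $h_1,h_2$ gives $tr(F h_1\,G h_2)=h_1h_2\,tr(FG)=0$.

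For the physical properties I would argue as follows.

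\textbf{Linear additivity.} For $\mu\perp\nu_1$ and $\mu\perp\nu_2$, the range of $\alpha G_1+\beta G_2$ lies in $\overline{\mathrm{Ran}\,G_1+\mathrm{Ran}\,G_2}\subseteq(\mathrm{Ran}\,F)^\perp$. Hence $F(\alpha G_1+\beta G_2)=0$; the forward direction of the equivalence again needs only range orthogonality.

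\textbf{Subordination.} If $\omega\ll\mu$ in the operator sense $\omega=W\lambda$ with $0\le W\le CF$, then $\mathrm{Ran}\,W\subseteq\mathrm{Ran}\,F$ and $WG=0$. I would adopt this operator domination as the meaning of absolute continuity, since scalar $tr\,\omega\ll tr\,\mu$ does not suffice.

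\textbf{Hamiltonian conservation.} This is coordinate invariance applied to $\Phi_t$.

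\textbf{Cross-measure annihilation.} This is the main obstacle. Here the operator cross-measure $\gamma=C\lambda$ of two sequences satisfies the matrix Cauchy--Schwarz positivity $\begin{pmatrix}F & C\\ C^* & G\end{pmatrix}\succeq 0$. This holds because it is the defect measure of the pair $(u_n,v_n)$ in $H\oplus H$. Positivity gives $C=F^{1/2}KG^{1/2}$ with $\norm{K}\le1$. The trace pairing tested against a multiplier $\Phi$ is then $tr(\Phi F^{1/2}KG^{1/2})$, which need not vanish even when $FG=0$, because $K$ can rotate the range of $G$ onto that of $F$. So the literal statement ``$\gamma\equiv0$'' is false in the operator setting. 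The correct inherited statement is that the interference term entering the total energy vanishes: $tr(C)=tr(KG^{1/2}F^{1/2})=0$. Indeed, $F^{1/2}G^{1/2}=0$ is equivalent to $FG=0$, and cyclicity of trace carries this over. I would prove annihilation in this traced form, equivalently for the $\mathcal L^1$-diagonal pairing against scalar $\phi I$.

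\textbf{Pythagorean decoupling and self-orthogonality.} Decoupling then follows at the level of $tr\,\omega=tr\,\mu+tr\,\nu$. Self-orthogonality follows because $tr(F^2)=0$ forces $F=0$, so $\mu=0$, which is strong convergence by the defect-measure characterization used in Theorem~\ref{thm:self_orthogonality}. The other claims should go through as stated. The expected difficulty is precisely formulating and justifying this traced version of cross-measure annihilation.
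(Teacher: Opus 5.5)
Your backbone is the paper's: every item is pushed through the pointwise criterion of Theorem~\ref{thm:equiv_polarization}. For symmetry, subordination (with the same operator domination $0\preceq W\preceq cF$) and self-orthogonality you argue exactly as the paper does. For consistency, localization, pushforwards and modules the paper instead inserts multipliers $\theta(x)I$ directly into Definition~\ref{def:op_infimum}. It also restricts additivity to $\alpha,\beta\ge 0$, because Definition~\ref{def:gerard_ortho} presupposes positivity. You instead check $F'G'=0$ on the transformed densities and reuse only the forward half of Theorem~\ref{thm:equiv_polarization}. That buys complex coefficients, complex $f,g$ and complex $\omega_i$. It also gives a genuine argument for the module property, where the scalar proof used no bump function for the $\theta I$ remark to translate. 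One detail needs adding. The approximation step in that forward half bounds $\abs{tr((\chi-P)F)}$ by $\norm{\chi-P}\,tr(F)$, which uses $F\succeq 0$. For non-positive densities such as $G'=\alpha G_1+\beta G_2$ or $fF$, you must use the trace norm $\norm{G'}_1$ and a $\lambda$ dominating it; the argument is then unchanged.

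At the cross-measure item you are right and the paper's proof is not. The paper asserts that orthogonal ranges make ``any cross-interference operators vanish identically''. It then records $tr(F+G)=tr(F)+tr(G)$, which is just linearity of the trace and never touches the cross term. Your factorisation $C=F^{1/2}KG^{1/2}$ pinpoints the failure, and it is easy to realise.

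\begin{itemize}
\item Take a scalar sequence $w_n\rightharpoonup 0$ in $L^2$ with defect measure $\sigma$, and set $u_n=w_n e_1$, $v_n=w_n e_2$ in $\mathbb{C}^2$.
\item Then $\mu=e_1e_1^*\sigma$ and $\nu=e_2e_2^*\sigma$ have densities with $tr(FG)=0$, so $\mu\perp\nu$. This is the same phase-space carrier with orthogonal polarizations, exactly the regime the operator framework is meant for.
\item Yet the cross-measure is $e_1e_2^*\sigma\neq 0$.
\item Moreover, $u_n+v_n$ generates $(e_1+e_2)(e_1+e_2)^*\sigma\neq I\sigma=\mu+\nu$.
\end{itemize}

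So operator-level annihilation and operator-level Pythagorean splitting, as literally claimed in the statement, are false. What survives are your traced versions. Your computation $tr(C)=tr(KG^{1/2}F^{1/2})=0$, via $F^{1/2}G^{1/2}=0$ and cyclicity, is precisely the step the paper's proof is missing to obtain $tr\,\omega=tr\,\mu+tr\,\nu$.
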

\begin{proof}
The proofs follow by translating the scalar bounding geometry of Section~\ref{sec:radon_multipliers} into the positive semi-definite cone of the operator algebra, $0 \le \chi \le I$, and leveraging the strict pointwise equivalence $tr(F(x)G(x)) = 0$ a.e. established in Theorem~\ref{thm:equiv_polarization}:
\begin{itemize}
    \item \textbf{Symmetry:} Follows immediately by substituting the valid multiplier $\tilde{\chi}(x) = I - \chi(x)$.
    \item \textbf{Topological \& Geometric Properties (Consistency, Localization, Pushforwards, Modules):} Where the scalar proofs utilized a smooth spatial bump function $\theta \in [0,1]$, the matrix equivalent is constructed via the scalar multiplication of the identity operator: $\eta(x) = \theta(x)I$. Because the identity commutes with all trace-class operators and $\theta \in [0,1]$, the resulting matrix satisfies $0 \le \eta(x) \le I$, carrying the bounded zero evaluations across sub-domains and coordinate transformations without requiring non-commutative matrix manipulation.
    \item \textbf{Linear Additivity \& Subordination:} The polarization condition of Definition~\ref{def:gerard_ortho} is formulated for \emph{positive} operator-valued measures, so additivity is intrinsically restricted to combinations that remain in the positive cone. For nonnegative coefficients $\alpha, \beta \ge 0$, the combination $\alpha\nu_1 + \beta\nu_2$ is again a positive operator-valued measure with density $\alpha G_1 + \beta G_2$, so by linearity of the trace $tr\big(F(\alpha G_1 + \beta G_2)\big) = \alpha\, tr(FG_1) + \beta\, tr(FG_2) = 0$, giving $\mu \perp (\alpha\nu_1 + \beta\nu_2)$. Complex-linear combinations leave the positive cone and hence fall outside the scope of this pointwise-trace definition; full complex additivity is available only in the algebraic projector formulation of Section~\ref{sec:matrix_projectors} (Theorem~\ref{thm:additivity_matrix_proj}), which does not presuppose positivity. Similarly, if a positive operator measure $\omega$ is absolutely continuous with respect to $\mu$ with $0 \preceq \tfrac{d\omega}{d\lambda} \preceq c\,F$ for some $c > 0$, then $\text{Range}\,\tfrac{d\omega}{d\lambda} \subseteq \text{Range}\,F \perp \text{Range}\,G$, so the trace evaluation against $G$ remains zero.
    \item \textbf{Cross-Measures \& Pythagorean Decoupling:} Because $F(x)$ and $G(x)$ are positive semi-definite matrices with $tr(FG) = 0$, their ranges are mutually orthogonal subspaces in $H$. Consequently, any cross-interference operators vanish identically. The energies decouple on the diagonal, $tr(F+G) = tr(F) + tr(G)$, verifying the Pythagorean splitting for multi-phase systems.
    \item \textbf{Self-Orthogonality:} If $\mu \perp \mu$, the pointwise condition forces $tr(F(x)^2) = 0$. Because $F(x)$ is positive semi-definite, this strictly requires $F(x) \equiv 0$ almost everywhere, extinguishing the measure and guaranteeing strong convergence of the generating sequence.
\end{itemize}
Thus, the purely functional matrix-multiplier defines a complete, rigorous equivalence class for operator-valued orthogonality that preserves all macro-scale physical decoupling mechanics.
\end{proof}

\section{The Obstructions}
\label{sec:obstructions}

\subsection{The Breakdown of Topological Orthogonality for Finite-Order Distributions}
\label{sec:limitations_topo}

We now examine the applicability of the topological $\eps$-bounding definition to the orthogonality of higher-order distributions. The $\eps$-bounding definition of Antoni\'c, Mitrovi\'c, and Peri\'c \cite{antonic2025} is an elegant and natural way to lift set-theoretic alienness to finite order, and it is the natural definition whose verification we scrutinize here. Our aim is constructive rather than critical: we wish to understand what happens when the standard partition-of-unity route---the most direct way of verifying the definition in practice---is transported to the cospherical bundle $S^*\Omega$. This analysis reveals a structural obstruction at strictly positive order. We stress that this concerns the verification route only, and makes no claim whatsoever about the validity of any result of \cite{antonic2025}: the difficulty lies entirely in the partition-of-unity technique rather than in the $\eps$-bounding definition itself, since subordinating a partition of unity to a microlocal cover introduces an unavoidable dichotomy between analytical divergence and topological inconsistency.

\subsubsection*{Analytical Divergence of Microlocal Partitions}
When attempting to separate distributions, the open covers $U$ and $V$ must isolate distinct frequency directions. If one constructs a proper phase-space partition of unity $\chi(x,\xi)$ subordinate to the microlocal cover $\{U, V\}$, the derivatives of the partition necessarily scale inversely with the width of the transition overlap region $\delta$. We make this precise and quantitative.

\begin{lemma}[Anisotropic derivative divergence of frequency partitions]
\label{lem:derivative_divergence}
Fix $\kappa \ge 1$. Let $A \subsetneq S^{d-1}$ be a closed spherical cap with nonempty interior, and for $\delta \in (0,\delta_0)$ let $A_\delta$ denote its (open) $\delta$-collar $\{\xi : \operatorname{dist}(\xi, A) < \delta\}$. Suppose $\chi_\delta \in C^\infty(S^{d-1})$ satisfies $0 \le \chi_\delta \le 1$, $\chi_\delta \equiv 1$ on $A$, and $\operatorname{supp}\chi_\delta \subseteq \overline{A_\delta}$. Then there is a constant $c_\kappa > 0$, depending only on $\kappa$ and the local geometry of $\partial A$ but not on $\delta$, such that
\begin{equation}
\label{eq:norm_divergence}
    \norm{\chi_\delta}_{C_b^{0,\kappa}(S^{d-1})} = \max_{\abs{\alpha} \le \kappa} \norm{\partial_\xi^\alpha \chi_\delta}_{L^\infty} \ge c_\kappa\, \delta^{-\kappa}.
\end{equation}
\end{lemma}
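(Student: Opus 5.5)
The plan is to reduce the estimate to a one-dimensional Taylor argument along a normal geodesic leaving the cap, and then transfer the resulting bound back to the coordinate derivatives $\partial_\xi^\alpha\chi_\delta$ by the chain rule. Fix a boundary point $p \in \partial A$ and let $\gamma:[0,\delta_0) \to S^{d-1}$ be the unit-speed great-circle arc starting at $p$ in the outward normal direction to $\partial A$. Since $A$ is a spherical cap, $\partial A$ is a smooth small circle. For $\delta_0$ small enough (depending only on the geometry of $\partial A$) we then have $\operatorname{dist}(\gamma(t), A) = t$ for $0 \le t < \delta_0$. We interpret $\partial_\xi^\alpha$ in a fixed smooth chart around $p$ (equivalently, via the degree-zero homogeneous extension to $\R^d$). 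We also shrink $\delta_0$ so that $\gamma([0,\delta_0))$ lies in that chart with uniformly bounded chart derivatives.

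Set $f(t) = \chi_\delta(\gamma(t))$ for $t$ in a neighbourhood of $[0,\delta]$, extending $\gamma$ slightly backwards into $A$. Because $\chi_\delta \equiv 1$ on $A$, $f \equiv 1$ for $t \le 0$, so $f(0)=1$. Because $\operatorname{supp}\chi_\delta \subseteq \overline{A_\delta}$ and $\gamma(t) \notin \overline{A_\delta}$ for $t>\delta$, $f \equiv 0$ on $(\delta, \delta_0)$. By smoothness, $f^{(j)}(\delta) = 0$ for every $j \ge 0$. Taylor's formula at $t=\delta$ with integral remainder of order $\kappa$ then gives
\begin{equation*}
    1 = f(0) = \frac{1}{(\kappa-1)!}\int_\delta^0 (0-s)^{\kappa-1} f^{(\kappa)}(s)\, ds,
\end{equation*}
hence $\sup_{[0,\delta]} \abs{f^{(\kappa)}} \ge \kappa!\,\delta^{-\kappa}$. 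This is the core of the divergence. It uses only that $\chi_\delta$ drops from $1$ to $0$ across a strip of width $\delta$ with all derivatives vanishing at the outer edge. Every $\kappa \ge 1$ is covered uniformly, and no assumption on the specific profile of $\chi_\delta$ is needed.

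It remains to compare $f^{(\kappa)}$ with the anisotropic norm. By the Fa\`a di Bruno formula in the chart,
\begin{equation*}
    f^{(\kappa)}(t) = \sum_{1 \le \abs{\alpha} \le \kappa} (\partial_\xi^\alpha \chi_\delta)(\gamma(t))\, P_\alpha\big(\gamma'(t), \dots, \gamma^{(\kappa)}(t)\big),
\end{equation*}
where the $P_\alpha$ are universal polynomials. The derivatives of $\gamma$ (in chart coordinates) are bounded on $[0,\delta_0)$ by a constant $C_\gamma$ independent of $\delta$. Hence $\abs{f^{(\kappa)}} \le C'_\kappa \max_{\abs{\alpha}\le\kappa}\norm{\partial_\xi^\alpha\chi_\delta}_{L^\infty}$. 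The lower-order terms that arise from the curvature of $\gamma$ are harmless precisely because the $C_b^{0,\kappa}$ norm takes the maximum over all $\abs{\alpha} \le \kappa$. Combining the two estimates yields \eqref{eq:norm_divergence} with $c_\kappa = \kappa!/C'_\kappa$, which depends only on $\kappa$ and on the chart and geodesic near $\partial A$.

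The main obstacle is not the Taylor step but this last bookkeeping. We must make precise what $\partial_\xi^\alpha$ means on $S^{d-1}$ (chart versus homogeneous extension), and check that both the chart constants and the identity $\operatorname{dist}(\gamma(t),A)=t$ hold uniformly for $\delta<\delta_0$. My first approach is to use the homogeneous extension $\tilde\chi(\eta)=\chi_\delta(\eta/\abs{\eta})$ restricted to the straight segment tangent to $\gamma$. I would fall back to the geodesic-chart argument above if the extension's norm equivalence proves awkward.
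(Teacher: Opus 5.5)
Your proposal is correct and matches the paper's proof in all essentials. Both restrict $\chi_\delta$ to a unit-speed great-circle arc crossing $\partial A$, use a one-dimensional Taylor estimate with integral remainder to get $\sup\abs{f^{(\kappa)}} \ge \kappa!\,\delta^{-\kappa}$, and transfer this back to $\max_{\abs{\alpha}\le\kappa}\norm{\partial_\xi^\alpha\chi_\delta}_{L^\infty}$ via Fa\`a di Bruno with $\delta$-independent bounds on the derivatives of $\gamma$. The only difference is cosmetic: you expand at the outer edge $t=\delta$, where all derivatives vanish, whereas the paper expands at the inner edge $t=0$ using $g\equiv 1$ for $t\le 0$. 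Your variant needs only $f(0)=1$, so it does not even require the backward extension of $\gamma$ into the interior of $A$.
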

\begin{proof}
Fix a boundary point $\xi_0 \in \partial A$ and a unit-speed geodesic (great circle arc) $\gamma:(-\delta_0,\delta_0) \to S^{d-1}$ with $\gamma(0)=\xi_0$ meeting $\partial A$ transversally, oriented so that $\gamma(t) \in A$ for $t \le 0$ and $\gamma(t) \notin \overline{A_\delta}$ for $t \ge \delta$. Set $g(t) := \chi_\delta(\gamma(t)) \in C^\infty(-\delta_0,\delta_0)$; then $g(t)=1$ for $t \le 0$ and $g(t)=0$ for $t \ge \delta$, with $0 \le g \le 1$. Since $g \equiv 1$ on $(-\delta_0, 0]$, all its derivatives vanish at the left gluing point: $g^{(j)}(0)=0$ for every $j \ge 1$. Taylor's formula with integral remainder at $t=0$ therefore reads
\begin{equation*}
    g(t) = 1 + \int_0^t \frac{(t-r)^{\kappa-1}}{(\kappa-1)!}\, g^{(\kappa)}(r)\, dr,
\end{equation*}
and evaluating at $t=\delta$, where $g(\delta)=0$,
\begin{equation*}
    1 = -\int_0^\delta \frac{(\delta-r)^{\kappa-1}}{(\kappa-1)!}\, g^{(\kappa)}(r)\, dr \le \norm{g^{(\kappa)}}_{L^\infty[0,\delta]} \int_0^\delta \frac{(\delta-r)^{\kappa-1}}{(\kappa-1)!}\, dr = \norm{g^{(\kappa)}}_{L^\infty}\, \frac{\delta^\kappa}{\kappa!},
\end{equation*}
whence $\norm{g^{(\kappa)}}_{L^\infty} \ge \kappa!\, \delta^{-\kappa}$. Along the unit-speed spherical geodesic the ambient derivatives of $\gamma$ are bounded ($\gamma'' = -\gamma$, etc.), so by the Fa\`a di Bruno formula $g^{(\kappa)}(t)$ is a fixed universal linear combination of $\partial_\xi^\alpha \chi_\delta(\gamma(t))$, $\abs{\alpha}\le\kappa$, with coefficients depending only on $\kappa$ and $d$; hence $\norm{g^{(\kappa)}}_{L^\infty} \le C_{\kappa,d}\, \max_{\abs{\alpha}\le\kappa}\norm{\partial_\xi^\alpha \chi_\delta}_\infty$. Combining the two bounds yields \eqref{eq:norm_divergence} with $c_\kappa = \kappa!/C_{\kappa,d} > 0$.
\end{proof}

\begin{corollary}[Failure of the partition-of-unity route at positive order]
\label{cor:pou_failure}
Let $\mu \in \D_{0,\kappa}'(S^*\Omega)$ with $\kappa \ge 1$ and let $\{\chi_\delta\}$ be any family of frequency partitions as in Lemma~\ref{lem:derivative_divergence} separating touching frequency supports. The a priori estimate furnished by the topological $\eps$-condition,
\begin{equation*}
    \abs{\inner{\mu}{\chi_\delta \varphi}} \le \eps\, \norm{\chi_\delta \varphi}_{C_b^{0,\kappa}} = \mathcal{O}\!\big(\eps\, \delta^{-\kappa}\big),
\end{equation*}
does not vanish as the separation sharpens ($\delta \to 0$), since the definition imposes no lower bound on the rate of $\delta(\eps)$. Hence subordinating a partition of unity to a microlocal cover cannot certify orthogonality at order $\kappa \ge 1$; a genuine obstruction must be exhibited by other means, which we do in Theorem~\ref{thm:boundary_limit} and Theorem~\ref{thm:scalar_inseparable}.
\end{corollary}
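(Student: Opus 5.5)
The argument reduces to Lemma~\ref{lem:derivative_divergence} plus a product-rule accounting. The statement has three parts: (i) the displayed a priori bound is $\mathcal{O}(\eps\,\delta^{-\kappa})$; (ii) this bound is sharp in $\delta$, so it cannot be improved merely by choosing the cut-off more cleverly; (iii) since the $\eps$-definition says nothing about how $\delta$ depends on $\eps$, the product $\eps\,\delta(\eps)^{-\kappa}$ need not tend to zero. I will treat these in that order.

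For (i), fix $\varphi \in C_c^{0,\kappa}(S^*\Omega)$ supported in a compact set on which $\mu$ obeys its order-$(0,\kappa)$ bound. Apply the Leibniz rule in the fibre variable:
\begin{equation*}
\partial_\xi^\alpha(\chi_\delta\varphi)=\sum_{\beta\le\alpha}\binom{\alpha}{\beta}\partial_\xi^\beta\chi_\delta\,\partial_\xi^{\alpha-\beta}\varphi .
\end{equation*}
There are no $x$-derivatives, because the norm is of order $0$ in $x$. This gives
\begin{equation*}
\norm{\chi_\delta\varphi}_{C_b^{0,\kappa}}\le C_\kappa\norm{\chi_\delta}_{C_b^{0,\kappa}}\norm{\varphi}_{C_b^{0,\kappa}}.
\end{equation*}
I then take the standard construction in which $\chi_\delta$ is a mollified indicator of $A_{\delta/2}$ at scale $\delta/4$, composed with a geodesic-distance profile. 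For this family $\norm{\partial^\beta\chi_\delta}_\infty\lesssim\delta^{-\abs{\beta}}$, hence $\norm{\chi_\delta}_{C_b^{0,\kappa}}=\mathcal{O}(\delta^{-\kappa})$. Inserting this into the $\eps$-inequality yields the upper bound.

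For (ii), Lemma~\ref{lem:derivative_divergence} gives the matching lower bound $\norm{\chi_\delta}_{C_b^{0,\kappa}}\ge c_\kappa\delta^{-\kappa}$ for every admissible family, so the rate $\delta^{-\kappa}$ is intrinsic. One technical point must be checked: the norm of the product $\chi_\delta\varphi$ must also blow up, not only the norm of $\chi_\delta$. To arrange this, I choose $\varphi\equiv1$ in $\xi$ near a boundary point $\xi_0\in\partial A$, times a fixed spatial bump. Then $\partial_\xi^\alpha(\chi_\delta\varphi)=\partial_\xi^\alpha\chi_\delta\cdot\varphi$ there, and the restricted-geodesic argument from the lemma's proof applies verbatim to $\chi_\delta\varphi$. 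So the a priori bound is genuinely of size $\eps\,\delta^{-\kappa}$.

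For (iii), observe that touching frequency supports force $\delta\to0$: the collar must shrink as the covers $U_\eps,V_\eps$ tighten around the common boundary. Since the definition leaves $\delta=\delta(\eps)$ free, one can take, for example, $\delta(\eps)=\eps^{1/\kappa}$ or anything smaller. In that case $\eps\,\delta^{-\kappa}\not\to0$, so no vanishing conclusion follows from the partition route.

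The main obstacle is stating (iii) precisely. Nonvanishing of an upper bound is not a counterexample; it only shows that the method yields nothing. I therefore state the corollary as a failure of certification and defer the actual non-orthogonality to Theorem~\ref{thm:boundary_limit}. I expect that theorem to use a concrete order-one example, something like $\partial_\xi$ of a surface measure concentrated on $\partial A$, to show that the pairing really is of order $\delta^{-1}\cdot\delta$ and does not tend to zero.
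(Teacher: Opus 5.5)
Your proposal is correct and follows the paper's own argument. The paper gives no separate proof: the corollary is just the lower bound $\norm{\chi_\delta}_{C_b^{0,\kappa}} \ge c_\kappa \delta^{-\kappa}$ of Lemma~\ref{lem:derivative_divergence} combined with the observation that the $\eps$-definition leaves $\delta(\eps)$ uncontrolled. Your Leibniz upper bound for a standard cut-off and your choice $\varphi \equiv 1$ in $\xi$ near $\xi_0$ make explicit two points the paper leaves tacit: the sharpness of the $\delta^{-\kappa}$ rate, and the fact that $\norm{\chi_\delta\varphi}_{C_b^{0,\kappa}}$ itself blows up.

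One side remark, outside the proof of the corollary: your guess about Theorem~\ref{thm:boundary_limit} is off in detail. The paper takes $\mu = \mathbf{1}_{[0,\pi]}(\theta)$ and $\nu = \delta_0(\theta)$ on $S^1$, and uses $\partial_\theta \mu = \delta_0 - \delta_\pi$ to bound the infimum-multiplier functional below by $\abs{\phi(0)}$. The $\delta^{-1}\cdot\delta$ scaling you anticipate appears only heuristically, in Remark~\ref{rem:analytical_div}.
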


\subsubsection*{Support Violation via Spatial Projection}
To circumvent the derivative divergence, one might attempt—as proposed in the aforementioned manuscript—to project the covers onto the base manifold $\Omega$ and employ a purely spatial partition of unity, $a(x)$ and $b(x)$. Because $a(x)$ is independent of the frequency variable, $\partial_\xi^\alpha a = 0$ for all $\abs{\alpha} \ge 1$, which trivially bounds the anisotropic norm and bypasses the $\mathcal{O}(\delta^{-\kappa})$ divergence.

However, this spatial projection introduces a fundamental topological contradiction. The $\eps$-bounding definition strictly requires the support of the localized test function to be contained within the open set $U$. For a test function constructed as $\Phi(x,\xi) = \varphi(x) a(x) \otimes 1(\xi)$, the phase-space support is a full cylindrical fiber:
\begin{equation*}
    \text{supp}(\Phi) = \text{supp}(\varphi a) \times S^{d-1}.
\end{equation*}

Geometrically, a genuine microlocal set $U \subset S^*\Omega$ designed to separate frequencies restricts the dual variable $\xi$ to a specific sub-region. For instance, $U$ might isolate a specific cone or hemisphere of the frequency bundle. Consequently, the fiber over any spatial point $x$ within $U$ is strictly a proper subset of the sphere: $U_x \subsetneq S^{d-1}$.

By contrast, the spatial partition $a(x)$ imposes absolutely no restriction on the frequency variable. For any spatial point $x$ in its support, the tensor product extends uniformly across all possible frequency directions. Therefore, the cylindrical support of $\Phi$ necessarily extends beyond the restricted conical boundaries of $U$, as one cannot embed a full spherical fiber into a proper conical subset. Because $\text{supp}(\Phi) \not\subseteq U$, the prerequisite support condition is systematically violated, and the topological $\eps$-bound cannot be validly applied to the spatial partition.

\subsection{The Derivative Obstruction for Scalar Phase-Space Multipliers}
\label{sec:limitations_scalar_phase}

To resolve the spatial support violation, one might attempt to redefine the separation via an infimum over all valid smooth scalar phase-space multipliers, $\chi \in C^\infty(S^*\Omega)$.
However, a rigorous functional analysis reveals that taking an infimum does not shield the evaluation from the $\mathcal{O}(\delta^{-\kappa})$ divergence if the microlocal frequency supports of the distributions intersect or touch.
We establish this failure explicitly by demonstrating that orthogonality under scalar multipliers is fundamentally destroyed by anisotropic differentiation in the frequency fibers.

\begin{theorem}[The Boundary Interaction Limit in Frequency]
\label{thm:boundary_limit}
Orthogonality defined via scalar multipliers is not preserved under anisotropic differentiation when frequency supports touch.
There exist phase-space measures $\mu, \nu \in \D_0'(S^*\Omega)$ such that $\mu \perp \nu$, but for a frequency differential operator $P(D_\xi)$, the action is no longer orthogonal: $P(D_\xi)\mu \not\perp \nu$.
\end{theorem}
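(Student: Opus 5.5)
Since the statement is existential, I will prove it by explicit construction, working locally in a chart of $S^*\Omega$. I fix a spatial point $x_0 \in \Omega$ and a direction $\xi_0 \in S^{d-1}$, and choose a unit-speed great-circle coordinate $t$ through $\xi_0$ so that $t=0$ corresponds to $\xi_0$. I take the two measures
\begin{equation*}
\mu = \delta_{x_0} \otimes \mathbf{1}_{\{t \le 0\}}\, d\sigma, \qquad \nu = \delta_{x_0} \otimes \mathbf{1}_{\{t > 0\}}\, d\sigma,
\end{equation*}
cut off smoothly away from $\xi_0$ if needed. These are Radon measures on $S^*\Omega$. They are concentrated on the disjoint sets $\{t\le 0\}$ and $\{t>0\}$, so they are mutually alien. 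Theorem~\ref{thm:equivalence_radon} then gives $\mu \perp \nu$ in the Infimum Multiplier sense. Their supports, however, touch along the hypersurface $\{t=0\}$.

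For the operator I take the first-order fibre derivative $P(D_\xi) = \partial_t$, differentiating along the geodesic. The distributional derivative of $\mu$ produces a boundary layer on the interface: up to sign, $\partial_t \mu$ contains the single layer $\delta_{x_0} \otimes \delta_{\{t=0\}}$, together with a smooth absolutely continuous part coming from the cut-off. Thus $P(D_\xi)\mu$ is a distribution of order $(0,1)$ whose singular part sits exactly on the closed set $\{t=0\}$. That set carries no $\nu$-mass, but every neighbourhood of it carries $\nu$-mass of order $\delta$.

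To show $P(D_\xi)\mu \not\perp \nu$, I use the scalar-multiplier infimum with a test function $\phi$ that equals $1$ near $(x_0,\xi_0)$. I will show that
\begin{equation*}
\inf_\chi \Big(\abs{\inner{\partial_t\mu}{\chi\phi}} + \abs{\inner{\nu}{(1-\chi)\phi}}\Big)
\end{equation*}
is bounded below. The first pairing equals, up to sign, $\int_{t\le0}\partial_t(\chi\phi)\,d\sigma$. After integrating out the transverse variables, this reduces to the boundary term $\chi\phi|_{t=0}$ minus the contribution of the cut-off, so the first term is essentially $\abs{\chi(x_0,\xi_0)}$ averaged over the interface. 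For the second term to be small, $1-\chi$ must be small on $\{t>0\}$ in an averaged sense. By continuity of $\chi$ in $t$, the trace of $\chi$ at $t=0$ then tends to be near $1$. Quantitatively, if the interface average of $\chi$ equals $\eta$, then on a strip of width $h$ the average of $1-\chi$ is at least roughly $1-\eta-h\norm{\partial_t\chi}$. This gives the trade-off: either $\eta$ is bounded below, or $\chi$ has a steep transition. A steep transition does not affect the order-zero pairing with $\nu$, so I will argue via the layer directly rather than via derivative norms.

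The main obstacle is that continuity in $t$ alone does not rule out $\chi$ dropping from $0$ at the interface to $1$ within a tiny $\delta$-layer. That would make the $\nu$-term $O(\delta)$ and the layer term zero, seemingly defeating the argument. I will close this gap by working instead with the $\eps$-bounding definition at order $\kappa=1$, which is the notion intended for $P(D_\xi)\mu$. There, any admissible multiplier must satisfy $\abs{\inner{\partial_t\mu}{\psi}} \le \eps\norm{\psi}_{C^{0,1}_b}$ for $\psi$ supported in $U$. The cover condition $U\cup V=X$ forces the interface point to lie in $U$ or in $V$. In the first case I test with a bump $\psi$ of fixed width that is independent of $\eps$, concentrated near the interface, with $\psi(\xi_0)=1$. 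This yields $\abs{\inner{\partial_t\mu}{\psi}} \approx 1$, while $\norm{\psi}_{C^{0,1}_b}$ stays bounded, which contradicts the bound for small $\eps$. In the second case the same fixed-width bump tested against $\nu$ gives mass of order one. This reproduces the $\mathcal{O}(\eps\delta^{-1})$ mechanism of Corollary~\ref{cor:pou_failure} as an explicit non-orthogonality, and I expect that making this case split precise, including fixed-width bumps that fit inside the open sets, will be the delicate step.
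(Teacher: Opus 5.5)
Your construction does not give a counterexample. For the pair you chose, one actually has $\partial_t\mu\perp\nu$, so the obstacle you flag is fatal, not merely apparent.

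The derivative of $\mathbf{1}_{\{t\le 0\}}$ in the single direction $t$ is itself a Radon measure, not a genuine order-$(0,1)$ object. It consists of a single layer on $\{t=0\}$ plus an absolutely continuous part on $\{t<0\}$ coming from the cut-off. Hence $\partial_t\mu\in\D_0'(S^*\Omega)$ is concentrated on $\{t\le 0\}$, while $\nu$ is concentrated on $\{t>0\}$. The two are mutually alien, and Theorem~\ref{thm:equivalence_radon} gives $\partial_t\mu\perp\nu$.

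Concretely, take a multiplier with $\chi=0$ on $\{t\le\delta/2\}$ and $\chi=1$ on $\{t\ge\delta\}$. Then $\inner{\partial_t\mu}{\chi\phi}=0$ exactly, and $\abs{\inner{\nu}{(1-\chi)\phi}}\le\norm{\phi}_\infty\,\nu(\{0<t<\delta\})=\mathcal{O}(\delta)$, so the infimum is $0$.

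Falling back to the $\eps$-bounding definition at order $(0,1)$ does not rescue the argument, quite apart from changing the notion of orthogonality in the statement. Take $U=\{t>0\}$ and $V=\{t<\delta\}$. Then $\inner{\partial_t\mu}{\psi}=0$ whenever $\operatorname{supp}\psi\subseteq U$. Also $\abs{\inner{\nu}{\psi}}\le\nu(\{0<t<\delta\})\norm{\psi}_\infty\le\eps\norm{\psi}_{C_b^{0,1}}$ whenever $\operatorname{supp}\psi\subseteq V$. A bump supported in $V$ sees at most $\mathcal{O}(\delta)$ of $\nu$, however its width is chosen, so your second case yields no contradiction. The interface lying in $V$ is harmless precisely because $\nu$ puts no mass there.

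The missing idea is that $\nu$ must charge the very set on which differentiation creates the boundary layer of $\mu$, while still being alien to $\mu$ itself. The paper takes $d=2$, $\mu=\rho\,dx\otimes\mathbf{1}_{[0,\pi]}(\theta)$ and $\nu=\rho\,dx\otimes\delta_0(\theta)$. These are alien, being absolutely continuous versus atomic in the fibre. But $\partial_\theta\mu=\delta_0-\delta_\pi$ has an atom exactly where $\nu$ does. Choosing $\phi$ with $\phi(0)\neq0$ and $\phi(\pi)=0$ reduces the infimum to $\inf_\chi\bigl(\abs{\chi(0)\phi(0)}+\abs{(1-\chi(0))\phi(0)}\bigr)\ge\abs{\phi(0)}>0$ by the triangle inequality. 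No trade-off between $\chi$ and its derivatives is needed.

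In your setting, the repair is to replace $\nu$ by a cut-off of the surface measure on the interface $\{t=0\}$, with the cut-off of $\mu$ positive near $\xi_0$. For $d=2$ this is the paper's point mass. For $d\ge3$ a point mass at $\xi_0$ alone would fail, since the single layer does not charge points. With this choice the layer part of $\partial_t\mu$ and $\nu$ are mutually absolutely continuous near $\xi_0$ on $\{t=0\}$. So $\partial_t\mu$ and $\nu$ are not alien, and Theorem~\ref{thm:equivalence_radon} gives $\partial_t\mu\not\perp\nu$.
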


\begin{proof}
We construct a counter-example exploiting touching frequency boundary supports on the cospherical fibers. Take $d = 2$, so that the fibre of $S^*\Omega = \Omega \times S^{d-1}$ is the frequency circle $X = S^1$, parameterized by angle $\theta \in [0, 2\pi)$, and fix an inert spatial density $\rho(x)\,dx$ with $\rho \in C_c^\infty(\Omega)$, $\rho \ge 0$, $\rho \not\equiv 0$. Define the genuine phase-space measures
\begin{equation*}
    \mu = \rho(x)\,dx \otimes \mathbf{1}_{[0,\pi]}(\theta), \qquad \nu = \rho(x)\,dx \otimes \delta_0(\theta) \quad \in \D_0'(S^*\Omega).
\end{equation*}
The frequency operator $P(D_\theta) = \partial_\theta$ acts purely in the fibre and leaves the base factor $\rho(x)\,dx$ inert, so the base factorises out of every pairing below and it suffices to analyse the fibre measures $\mathbf{1}_{[0,\pi]}(\theta)$ and $\delta_0(\theta)$, which we continue to denote $\mu$ and $\nu$ by a harmless abuse of notation.
Because $\mathbf{1}_{[0,\pi]}$ is absolutely continuous with respect to the spherical Lebesgue measure and $\delta_0$ is a point mass, they are mutually alien Radon measures. By the equivalences established in Theorem~\ref{thm:equivalence_radon}, they strictly satisfy the Infimum Multiplier condition: $\mu \perp \nu$.

On the closed manifold $S^1$, the distributional derivative of the indicator function naturally produces two boundary masses: $\partial_\theta \mu = \delta_0(\theta) - \delta_\pi(\theta)$.
We evaluate the scalar phase-space multiplier condition for $\partial_\theta \mu$ and $\nu$.
For any test function $\phi \in C_c^\infty(S^1)$ chosen such that $\phi(0) \neq 0$ and $\phi(\pi) = 0$:
\begin{align*}
    \inf_{\chi \in C^\infty} \Big( \abs{\inner{\partial_\theta \mu}{\chi \phi}} + \abs{\inner{\nu}{(1 - \chi) \phi}} \Big) &= \inf_{\chi} \Big( \abs{\inner{\delta_0 - \delta_\pi}{\chi \phi}} + \abs{\inner{\delta_0}{(1 - \chi) \phi}} \Big) \\
    &= \inf_{\chi} \Big( \abs{\chi(0)\phi(0) - \chi(\pi)\phi(\pi)} + \abs{(1 - \chi(0))\phi(0)} \Big).
\end{align*}
Because we explicitly localized our test function such that $\phi(\pi) = 0$, the second boundary mass is strictly annihilated, reducing the evaluation to:
\begin{equation*}
    \inf_{\chi} \Big( \abs{\chi(0)\phi(0)} + \abs{(1 - \chi(0))\phi(0)} \Big).
\end{equation*}
By the triangle inequality, $\abs{\chi(0)\phi(0)} + \abs{(1 - \chi(0))\phi(0)} \ge \abs{\phi(0)} > 0$.
The infimum is strictly bounded away from zero, proving that $\partial_\theta \mu \not\perp \nu$.
\end{proof}

\begin{remark}[The Analytical Divergence in the Frequency Fiber]
\label{rem:analytical_div}
This non-preservation reflects a structural feature of microlocal analysis. When two mutually alien distributions possess boundary-touching frequency supports (as in the $\kappa=1$ half-circle indicator example of Theorem~\ref{thm:boundary_limit}), the optimal scalar phase-space multiplier $\chi(x,\xi)$ separating them must transition from $0$ to $1$ over a frequency interval of shrinking width $\delta_\xi \to 0$.
Because an anisotropic distribution of finite order $\kappa \ge 1$ evaluates the $\kappa$-th frequency derivative of the test function, two distinct scalings appear and must be kept separate. The anisotropic norm of the multiplier saturates the worst-case derivative, $\norm{\partial_\xi^\kappa \chi}_\infty \ge c_\kappa\,\delta_\xi^{-\kappa}$ by Lemma~\ref{lem:derivative_divergence}, exactly as in the topological bound of Section~\ref{sec:limitations_topo}; the duality pairing itself, however, integrates $\partial_\xi^\kappa \chi$ against a density supported on the transition region of width $\delta_\xi$, gaining one power of $\delta_\xi$ and scaling as $\mathcal{O}(\delta_\xi^{1-\kappa})$.
\end{remark}

Consequently, as the transition sharpens, the internal evaluation stalls at $\mathcal{O}(1)$ for $\kappa = 1$ or diverges for $\kappa \ge 2$. The infimum over all such scalar phase-space multipliers therefore fails to reach zero, as established rigorously for $\kappa = 1$ in Theorem~\ref{thm:boundary_limit}. Together with the topological no-go result below, this yields a structural limit: \textbf{touching higher-order distributions cannot be separated by scalar phase-space partitions of unity.}

\subsection{The Topological Triviality of Scalar Algebraic Projectors}
\label{sec:algebraic_obstruction}

Because attempting to geometrically partition the phase space forces a divergent transition, geometric separation must be entirely abandoned. To bypass the derivative divergence, one must elevate the separation mechanism to the algebraic geometry of the principal symbols. 

For scalar distributions, one would naturally propose defining orthogonality via smooth, scalar projector-valued symbols $\pi_\mu(x, \xi)$ and $\pi_\nu(x, \xi)$ that map onto the characteristic varieties of the generating sequences, imposing the condition $\pi_\mu \pi_\nu = 0$. However, invoking scalar algebraic projectors introduces a topological triviality.

\begin{theorem}[The Topological No-Go Theorem for Scalar Distributions]
\label{thm:scalar_inseparable}
Let $\mathcal{M}$ be a connected characteristic manifold. It is mathematically impossible for two non-trivial higher-order scalar distributions $\mu, \nu \in \D_{0,\kappa}'(\mathcal{M})$ to be algebraically orthogonal.
\end{theorem}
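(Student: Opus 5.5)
The plan is to make precise what ``algebraically orthogonal'' means for scalar distributions, and then show that the algebraic condition collapses to the trivial case. Following the preceding paragraph, I would take this to mean the following. There exist smooth scalar projector-valued symbols $\pi_\mu, \pi_\nu \in C^\infty(\mathcal{M})$ with $\pi_\mu^2 = \pi_\mu$ and $\pi_\nu^2 = \pi_\nu$. They satisfy $\pi_\mu \pi_\nu = 0$. They \emph{represent} their distributions, in the sense that $\pi_\mu \mu = \mu$ and $\pi_\nu \nu = \nu$, i.e. $\inner{\mu}{\pi_\mu \varphi} = \inner{\mu}{\varphi}$ for all test functions $\varphi \in C_c^{0,\kappa}(\mathcal{M})$, and similarly for $\nu$. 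Multiplication by a smooth function is continuous on $C_c^{0,\kappa}$, so these pairings make sense at every order $\kappa$.

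The key step is pointwise algebra combined with connectedness. At each point $p \in \mathcal{M}$, the idempotent relation $\pi_\mu(p)^2 = \pi_\mu(p)$ in the scalar field $\mathbb{C}$ forces $\pi_\mu(p) \in \{0,1\}$. Since $\pi_\mu$ is continuous and $\mathcal{M}$ is connected, the preimages $\pi_\mu^{-1}(0)$ and $\pi_\mu^{-1}(1)$ are disjoint closed sets covering $\mathcal{M}$, so one of them is empty. Hence $\pi_\mu \equiv 0$ or $\pi_\mu \equiv 1$, and likewise for $\pi_\nu$. This is the ``topological triviality'' of one-dimensional projectors: on a connected base, the rank-one/rank-zero bundle $\operatorname{Range}\pi(p) \subseteq \mathbb{C}$ cannot jump.

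Next I would split into cases.
\begin{itemize}
\item If $\pi_\mu \equiv 0$, then $\inner{\mu}{\varphi} = \inner{\mu}{0 \cdot \varphi} = 0$ for every $\varphi$, so $\mu = 0$, contradicting non-triviality. The same argument applies to $\pi_\nu$.
\item Otherwise $\pi_\mu \equiv \pi_\nu \equiv 1$, and then $\pi_\mu \pi_\nu \equiv 1 \neq 0$, contradicting the orthogonality relation.
\end{itemize}
So no pair of non-trivial scalar distributions admits algebraically orthogonal scalar projectors, and the order $\kappa$ plays no role. This is the point to contrast with Section~\ref{sec:limitations_scalar_phase}. Geometric cutoffs could jump between $0$ and $1$ only at the cost of $\mathcal{O}(\delta^{-\kappa})$ derivatives by Lemma~\ref{lem:derivative_divergence}. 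Exact smooth projectors, by contrast, cannot jump at all.

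The main obstacle is not the algebra but the formalization. First, I must fix the definition so that the theorem is not vacuous or false. For instance, if one only required $\pi_\mu \pi_\nu = 0$ on the supports, disjoint components of a disconnected support would allow separation. This is why connectedness of $\mathcal{M}$, rather than of the supports, is essential, and I would state explicitly that the projectors are globally smooth on $\mathcal{M}$. Second, I must ensure that ``represents'' is phrased weakly enough to apply to higher-order distributions, via the duality pairing rather than via concentration on sets. If time permits, I would remark that the argument extends to projectors that are merely continuous, and that it fails precisely when the fibre is $\mathbb{C}^N$ with $N \ge 2$. This motivates the matrix projectors of Section~\ref{sec:resolution}.
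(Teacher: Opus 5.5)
Your proposal is correct and follows essentially the same route as the paper. Pointwise idempotency in $\mathbb{C}$ forces $\pi \in \{0,1\}$, continuity together with connectedness of $\mathcal{M}$ makes each projector globally constant, and the preservation identity $\inner{\mu}{\pi_\mu \Phi} = \inner{\mu}{\Phi}$ then annihilates whichever distribution carries the zero projector. The paper phrases your case split as ``$\pi_\mu\pi_\nu = 0$ forces one projector to vanish, w.l.o.g.\ $\pi_\mu \equiv 0$'', and your closed-preimage connectedness argument and your insistence that the projectors be global on $\mathcal{M}$ are slightly more careful than the paper's wording, but the substance is the same.
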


\begin{proof}
In a scalar partial differential equation, the state space at any phase-space coordinate $(x, \xi)$ is strictly one-dimensional ($\mathbb{C}$). An algebraic projector $\pi$ is defined by the idempotency property $\pi^2 = \pi$. In a one-dimensional state space, the only solutions to this algebraic requirement are strictly $\pi \in \{0, 1\}$.

Because microlocal principal symbols are required to be smooth functions over the manifold, they cannot jump discontinuously between $0$ and $1$. Therefore, on any connected manifold $\mathcal{M}$, the scalar symbols $\pi_\mu(x,\xi)$ and $\pi_\nu(x,\xi)$ must be globally constant functions, identically $0$ or identically $1$ everywhere.

Imposing the orthogonality condition $\pi_\mu \pi_\nu = 0$ forces at least one of the projectors to be identically zero. Assume without loss of generality that $\pi_\mu \equiv 0$. 

By definition, a principal projector mapping onto the characteristic variety of a generating sequence must preserve its own distribution, meaning $\pi_\mu \mu = \mu$. Evaluating this algebraic preservation yields:
\begin{equation*}
    \inner{\mu}{\pi_\mu \Phi} = \inner{\mu}{\Phi}.
\end{equation*}
Because $\pi_\mu \equiv 0$, the left-hand side is identically zero, yielding $\inner{\mu}{\Phi} = 0$. Because this must hold for all phase-space test functions $\Phi \in C_c^\infty(\mathcal{M})$, it strictly forces $\mu \equiv 0$, annihilating the distribution entirely. Thus, it is mathematically impossible to have two non-trivial orthogonal scalar distributions.
\end{proof}

This establishes a structural fact: because a one-dimensional space contains no internal dimensions to project into, \textbf{higher-order scalar distributions with touching microlocal supports are fundamentally inseparable.} They cannot be separated geometrically due to the derivative divergence $\mathcal{O}(\delta^{-\kappa})$, and they cannot be separated algebraically due to topological triviality. 

\begin{remark}[The Loss of Physical Decoupling]
\label{rem:loss_of_decoupling}
It is critical to note that the loss of order-zero regularity does not merely break geometric separation; it breaks the physical decoupling of the system. Because higher-order distributions lack strict measure-theoretic positivity, standard Cauchy-Schwarz annihilation fails, meaning Pythagorean energy decoupling is fundamentally lost under phase-space geometric frameworks.
\end{remark}

\section{The Resolution}
\label{sec:resolution}

\subsection{Vector-Valued Systems and the Failure of Spatial Matrices}
\label{sec:spatial_matrix_fail}

The algebraic resolution to this phase-space dichotomy is strictly confined to systems of PDEs. While scalar equations succumb to the topological triviality proven in Section~\ref{sec:algebraic_obstruction}, physical systems such as the Maxwell, Dirac, or heterogeneous elasticity equations generate defect measures valued in the trace-class operators of a multidimensional Hilbert space, $\mathcal{L}^1(H)$. The fundamental advantage of these inherently vector-valued systems is that oscillating sequences may occupy the exact same phase-space coordinates, distinguishable by their orthogonal internal polarizations. 

Before deploying algebraic symbols to isolate these polarizations, it is instructive to construct the matrix-valued analogue of the Smooth Urysohn Lemma, to definitively prove why \textit{spatial} matrices cannot resolve the higher-order divergence, even for systems.

\begin{theorem}[Matrix-Valued Smooth Urysohn Lemma]
\label{thm:matrix_urysohn}
Let $K_1, K_2 \subset X$ be disjoint compact sets. Let $P_1, P_2 \in \mathcal{L}(H)$ be orthogonal projection matrices ($P_1 P_2 = 0$). There exists a smooth operator-valued function $\chi \in C^\infty(X, \mathcal{L}(H))$ such that $0 \le \chi(x) \le I$ for all $x$, with $\chi(x) = P_1$ on an open neighborhood of $K_1$ and $\chi(x) = P_2$ on an open neighborhood of $K_2$. Crucially, all operator derivatives strictly vanish on these active neighborhoods: $\partial^\alpha \chi(x) = 0$ for all $\abs{\alpha} \ge 1$.
\end{theorem}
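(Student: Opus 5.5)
The construction is a convex interpolation between the two projectors, driven by scalar smooth Urysohn functions, followed by a check that the values stay in the operator interval $[0,I]$.

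First, apply the scalar Smooth Urysohn Lemma (already used in Theorem~\ref{thm:topo_consistency_radon}). Since $K_1, K_2$ are disjoint compacts, choose open neighbourhoods $W_1 \supset K_1$ and $W_2 \supset K_2$ with disjoint closures. Then take $\theta_1, \theta_2 \in C^\infty(X)$ with $0 \le \theta_i \le 1$, $\theta_i \equiv 1$ on an open neighbourhood $W_i' \Subset W_i$ of $K_i$, and $\operatorname{supp}\theta_i \subseteq W_i$. In particular $\theta_1\theta_2 \equiv 0$.

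Next, define
\begin{equation*}
    \chi(x) := \theta_1(x) P_1 + \theta_2(x) P_2 .
\end{equation*}
This map is smooth into $\mathcal{L}(H)$, because it is a finite combination of smooth scalar functions with constant operator coefficients. On $W_1'$ we have $\theta_1 = 1$ and $\theta_2 = 0$, so $\chi \equiv P_1$ there. Symmetrically, $\chi \equiv P_2$ on $W_2'$. Since $\chi$ is locally constant on these open sets, every derivative $\partial^\alpha \chi$ with $\abs{\alpha} \ge 1$ vanishes identically on them. This is exactly the ``crucially'' clause of the statement.

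The remaining step is the operator bound $0 \le \chi(x) \le I$, and this is where the hypothesis $P_1P_2 = 0$ enters. For orthogonal projections, $P_1P_2 = 0$ gives $P_2P_1 = (P_1P_2)^\ast = 0$, so $P_1 + P_2$ is again an orthogonal projection and satisfies $P_1 + P_2 \le I$.
\begin{itemize}
    \item Positivity: $\chi(x)$ is a sum of positive operators with nonnegative coefficients, so $\chi(x) \ge 0$.
    \item Upper bound: for every $h \in H$ we have $\inner{\chi(x)h}{h} = \theta_1\norm{P_1h}^2 + \theta_2\norm{P_2h}^2 \le \norm{P_1h}^2 + \norm{P_2h}^2 = \norm{(P_1+P_2)h}^2 \le \norm{h}^2$.
\end{itemize}
Strictly, the disjoint supports would already let us avoid using orthogonality here, since at each point at most one term is nonzero and $\theta_i P_i \le I$. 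Still, the estimate above is robust and also covers overlapping cutoffs if one prefers a partition of unity.

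I expect no genuine obstacle; the only care needed is in choosing nested neighbourhoods so that $\theta_1\theta_2 = 0$ and each $\theta_i$ equals $1$ on an \emph{open} set containing $K_i$. That point, rather than mere equality on $K_i$, is what makes the vanishing of all derivatives on the active neighbourhoods hold. I would close with a remark connecting back to the section's purpose: the transition region $W_i \setminus W_i'$ carries derivatives of order $\delta^{-\abs{\alpha}}$ when $\operatorname{dist}(K_1, K_2) \sim \delta$, just as in Lemma~\ref{lem:derivative_divergence}. This indicates that spatial matrix cutoffs inherit the same divergence once the separation is forced into the frequency fibre.
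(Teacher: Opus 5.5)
Your proof is correct and follows essentially the paper's route: you multiply the constant projectors by scalar smooth Urysohn cut-offs, so $\chi$ is locally constant, and hence has vanishing derivatives, on open neighbourhoods of $K_1$ and $K_2$. The only difference is cosmetic: the paper uses a single cut-off $\theta$ and the convex combination $\chi=\theta P_1+(1-\theta)P_2$, where $0\le\chi\le I$ follows from convexity of the operator interval $[0,I]$ alone (so $P_1P_2=0$ is not actually needed, as you also note for your disjointly supported bumps), whereas your $\chi=\theta_1P_1+\theta_2P_2$ vanishes away from the two neighbourhoods and you bound it via $P_1+P_2\le I$.
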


\begin{proof}
By the standard scalar Smooth Urysohn Lemma, there exists a scalar function $\theta \in C^\infty(X)$ such that $0 \le \theta(x) \le 1$, where $\theta(x) = 1$ strictly on an open neighborhood $N_1 \supset K_1$, and $\theta(x) = 0$ strictly on an open neighborhood $N_2 \supset K_2$. We define the operator-valued multiplier as the convex combination:
\begin{equation*}
    \chi(x) = \theta(x) P_1 + (1 - \theta(x)) P_2.
\end{equation*}
Since $P_1$ and $P_2$ are orthogonal projectors and $\theta(x) \in [0,1]$, the resulting matrix $\chi(x)$ is positive semi-definite and bounded by the identity. Because $\theta(x)$ is locally constant on $N_1$, its derivatives vanish, meaning the operator derivatives of the matrix evaluating via the Leibniz rule evaluate exactly to zero: $\partial^\alpha \chi(x) = 0$ on $N_1$. The symmetric argument holds for $N_2$.
\end{proof}

\begin{remark}[The Dimensional Blindness of Spatial Matrices]
\label{rem:illusion}
While Theorem~\ref{thm:matrix_urysohn} successfully constructs a matrix that is locally flat over active supports, it falls into the exact same topological trap as scalar spatial multipliers. It relies on a manually constructed, non-physical spatial bump function $\theta(x)$ that is entirely blind to the internal frequency dimensions of the phase space. If the distinct modes physically intersect or approach the crossing manifold $\mathcal{C}$, their spatial projections overlap. A purely spatial matrix $\chi(x)$ cannot separate them without becoming discontinuous, which violates the test space topology. To achieve a truly intrinsic resolution, we must abandon spatially engineered boundaries entirely and rely strictly on the native, phase-space algebraic projectors of the principal symbol.
\end{remark}

\subsection{Orthogonality in Vector-Valued Systems}
\label{sec:matrix_projectors}

Because spatial matrices fail due to dimensional blindness, and scalar phase-space projectors fail due to topological triviality, the exact mathematical resolution lies at their intersection: applying microlocal phase-space matrix projectors to vector-valued physical systems.

Let $\Pi_\mu(x, \xi)$ and $\Pi_\nu(x, \xi)$ be smooth, orthogonal matrix-valued projector symbols mapping onto the characteristic varieties of the sequences generating the operator-valued distributions $\mu$ and $\nu$. Because the state space $\mathbb{C}^N$ is multidimensional, these matrix symbols can smoothly rotate their orthogonal eigenspaces over the connected manifold $\mathcal{M}$ without ever jumping discontinuously to the zero matrix, thereby evading the scalar triviality trap.

To guarantee the mathematical validity of the duality pairing, we must strictly respect the smoothness requirement of the test space. Let $p(x,\xi)$ be the matrix-valued principal symbol of the governing PDE system. In standard physical vector-valued systems (e.g., Maxwell's equations, linear elasticity), the differential operator is formally self-adjoint, meaning its principal symbol $p(x,\xi)$ is a Hermitian matrix. By the Spectral Theorem, a Hermitian symbol intrinsically guarantees that its distinct wave modes correspond to mutually orthogonal eigenspaces. 

By standard matrix perturbation theory, the orthogonal spectral projectors $\Pi(x,\xi)$ mapping onto these eigenspaces are smooth only where the eigenvalues of $p(x,\xi)$ have constant multiplicity. We make the singular set precise.

\begin{definition}[The crossing set]
\label{def:crossing_set}
For $(x,\xi) \in \mathcal{M}$, let $\lambda_1(x,\xi) < \dots < \lambda_{r(x,\xi)}(x,\xi)$ enumerate the \emph{distinct} eigenvalues of the Hermitian principal symbol $p(x,\xi)$, and for each let
\begin{equation*}
    \Pi_k(x,\xi) = \frac{1}{2\pi i} \oint_{\gamma_k} \big(z I - p(x,\xi)\big)^{-1}\, dz
\end{equation*}
be the associated Riesz spectral projector, where $\gamma_k$ is a small positively oriented contour enclosing $\lambda_k(x,\xi)$ and no other eigenvalue. The \emph{crossing set} $\mathcal{C} \subset \mathcal{M}$ is the locus where the number $r(x,\xi)$ of distinct eigenvalues drops below its generic value---equivalently, where the rank of some spectral projector $\Pi_k$ jumps:
\begin{equation*}
    \mathcal{C} = \big\{ (x,\xi) \in \mathcal{M} : r \text{ is not locally constant at } (x,\xi) \big\}.
\end{equation*}
On the open complement $\mathcal{M} \setminus \mathcal{C}$, each distinct eigenvalue has locally constant multiplicity, so by Kato's perturbation theory every $\Pi_k$ is real-analytic in $(x,\xi)$; this is precisely the regularity required of the projector symbols in Definition~\ref{def:matrix_projectors} below.
\end{definition}

Equivalently, $\mathcal{C}$ admits an algebraic description as the zero set of the discriminant of the \emph{squarefree part} $q(t;x,\xi)$ (the radical) of the characteristic polynomial $\det\big(t I - p(x,\xi)\big)$ in the spectral variable $t$:
\begin{equation*}
    \mathcal{C} = \big\{ (x,\xi) \in \mathcal{M} : \operatorname{disc}_t q(t;x,\xi) = 0 \big\}.
\end{equation*}
Passing to the squarefree part is essential: it strips the \emph{permanent} multiplicities carried by physical systems with internal symmetry---such as the constant double speeds of Maxwell's equations or linear elasticity---so that $\operatorname{disc}_t q$ vanishes exactly at genuine eigenvalue crossings rather than identically. Using the discriminant of the full characteristic polynomial would erroneously collapse $\mathcal{C}$ onto all of $\mathcal{M}$ for such systems. To ensure the duality pairing is mathematically well-defined, the distributions must be supported safely away from this singular set.

\begin{remark}[Finite- and infinite-dimensional state spaces]
\label{rem:finite_infinite}
For the differential systems of primary interest---Maxwell's equations, the Dirac system, and linear elasticity---the principal symbol $p(x,\xi)$ is a finite $N \times N$ Hermitian matrix acting on the fibre $\mathbb{C}^N$, so $\mathcal{L}^1(H) = \mathcal{L}(H)$ is the full matrix algebra and the Riesz projectors of Definition~\ref{def:crossing_set} reduce to elementary matrix perturbation theory. The general formulation over an infinite-dimensional $H$ remains valid under the standing hypothesis that the characteristic eigenvalue $\lambda_\mu(x,\xi)$ is an isolated point of the spectrum of finite multiplicity, separated from the remainder of $\sigma\big(p(x,\xi)\big)$ by a uniform spectral gap on the (compact) microlocal support away from $\mathcal{C}$, the gap closing precisely on $\mathcal{C}$. Under this hypothesis the contour $\gamma_\mu$ encircles only $\lambda_\mu$, and the Riesz projector $\Pi_\mu$ is real-analytic by Kato's holomorphic perturbation theory exactly as in the finite-dimensional case.
\end{remark}

\begin{definition}
\label{def:matrix_projectors}
Let $\mathcal{M}$ be the projection of the PDE-induced characteristic variety onto the cospherical bundle $S^*\Omega$. Let $\mathcal{C} \subset \mathcal{M}$ be the crossing set of Definition~\ref{def:crossing_set}. Let $\mu, \nu \in \D_{0,\kappa}'(\mathcal{M}, \mathcal{L}^1(H))$ be operator-valued distributions whose microlocal supports are strictly disjoint from $\mathcal{C}$. 
Two such distributions are orthogonal if, for all scalar phase-space test functions $\Phi \in C_c^\infty(\mathcal{M} \setminus \mathcal{C})$, the following holds:
\begin{align*}
    \inner{\mu}{\Pi_\nu \Phi} &= 0 \\
    \inner{\nu}{\Pi_\mu \Phi} &= 0
\end{align*}
where $\Pi_\mu, \Pi_\nu$ are the smooth spectral projectors of the principal symbol associated with the two modes, satisfying $\Pi_\mu \Pi_\nu = 0$.
\end{definition}

We make the duality pairing precise, since finite-order distributions carry no integral representation. By definition $\mu \in \D_{0,\kappa}'(\mathcal{M} \setminus \mathcal{C}, \mathcal{L}^1(H))$ is a continuous linear map from the scalar anisotropic test space $C_c^{0,\kappa}(\mathcal{M} \setminus \mathcal{C})$ into the trace-class operators $\mathcal{L}^1(H)$; that is, $\inner{\mu}{\phi} \in \mathcal{L}^1(H)$ for every scalar $\phi$. The operator-valued test symbols factorise as the completed tensor product
\begin{equation*}
    C_c^{0,\kappa}(\mathcal{M} \setminus \mathcal{C}, \mathcal{L}(H)) = C_c^{0,\kappa}(\mathcal{M} \setminus \mathcal{C}) \,\widehat{\otimes}\, \mathcal{L}(H),
\end{equation*}
with $\mu$ acting on the phase-space factor and the trace contracting the internal factor. Accordingly, the scalar trace pairing is defined on elementary tensors $\Psi = B \otimes \phi$ (with $B \in \mathcal{L}(H)$ and $\phi$ scalar) by
\begin{equation*}
    \inner{\mu}{\Psi} := \text{tr}\big( B\, \inner{\mu}{\phi} \big),
\end{equation*}
and extended by linearity and $C_b^{0,\kappa}$-continuity to all operator-valued symbols; for order-zero $\mu$ this recovers $\int_{\mathcal{M}} \text{tr}\big(\Psi(x,\xi)\, d\mu(x,\xi)\big)$. For a scalar $\Phi \in C_c^\infty(\mathcal{M} \setminus \mathcal{C})$ the products $\Pi_\nu \Phi$ and $\Pi_\mu \Phi$ are admissible operator-valued test symbols, since $\Pi_\mu, \Pi_\nu$ are smooth on $\mathcal{M} \setminus \mathcal{C}$ by Definition~\ref{def:crossing_set} and $\Phi$ is compactly supported there.

The trace manipulations in the proofs below are not an extra hypothesis but a consequence of this factorisation. We stress that every operator product appearing below, such as $\Pi_\nu \Phi \Pi_\mu$, is a \emph{pointwise} multiplication of the symbol values at fixed $(x,\xi)$, not a pseudo-differential composition of operators; consequently no symbol-calculus (Moyal) corrections or lower-order commutator brackets arise, and the matrix algebra on the internal factor $\mathcal{L}(H)$ is exact. Because the trace contracts the internal Hilbert space $H$ independently of the phase-space evaluation, its cyclicity holds pointwise in $(x,\xi)$ and commutes with the action of $\mu$. In particular, since the $\mathcal{L}^1(H)$-values of $\mu$ satisfy the left polarization identity $\Pi_\mu \inner{\mu}{\phi} = \inner{\mu}{\phi}$ of Lemma~\ref{lem:intrinsic_polarization}, inserting $\Pi_\mu$ to the right of any symbol leaves the pairing unchanged, $\inner{\mu}{\Psi \Pi_\mu} = \inner{\mu}{\Psi}$, because $\text{tr}(\Psi \Pi_\mu T) = \text{tr}(\Psi T)$ whenever $\Pi_\mu T = T$; this identity is recorded as Lemma~\ref{lem:right_insertion} below. This single identity drives every algebraic collapse below and requires no integral representation of $\mu$.

This support restriction is a genuine hypothesis, not a vacuous one: on $\mathcal{M} \setminus \mathcal{C}$ the test class $C_c^\infty(\mathcal{M} \setminus \mathcal{C})$ is determining, so no information about an admissible $\mu$ is lost, while the complementary case of distributions charging $\mathcal{C}$ falls genuinely outside the present framework and is taken up as the central open problem in Section~\ref{sec:future_work}.

\begin{remark}[The Elliptic Vacuum]
\label{rem:elliptic_vacuum}
The restriction of the test space to the characteristic manifold $\mathcal{M}$ (and away from $\mathcal{C}$) is not a limitation of the framework, but a reflection of the underlying physics. In any region of phase space where the principal symbol $p(x,\xi)$ is invertible, the system is locally elliptic. By standard microlocal elliptic regularity, any sequence satisfying the PDE in this region must converge strongly, forcing the localized defect distribution to vanish identically. Therefore, the characteristic manifold $\mathcal{M}$ represents the absolute boundary of existence for high-frequency oscillatory limits. The space outside $\mathcal{M}$ acts as an "elliptic vacuum" where non-trivial distributions cannot exist, justifying our strict topological confinement to $\mathcal{M}$.
\end{remark}

\begin{remark}[The Hyperbolic Prerequisite]
\label{rem:hyperbolic_prerequisite}
The structural reliance on the characteristic manifold $\mathcal{M}$ explicitly defines the PDE classification scope of this framework. For purely elliptic systems, the principal symbol is globally invertible for all $\xi \neq 0$, meaning $\mathcal{M} = \emptyset$. Consequently, any localized defect distribution falls entirely into the elliptic vacuum and vanishes. Therefore, the existence of non-trivial phase-space orthogonality is structurally exclusive to systems with real characteristics—specifically hyperbolic, transport, or dispersive equations. Orthogonality in the $L^p$-$L^q$ phase space is intrinsically a phenomenon of propagating waves, not steady-state potentials.
\end{remark}

By elevating the separation mechanism to algebraic matrix symbols acting on the safe, open subspace $\mathcal{M} \setminus \mathcal{C}$, we decouple the separation from rigid engineered boundaries, natively bypassing the $\mathcal{O}(\delta^{-\kappa})$ derivative divergence while recovering the core structural properties of orthogonality.

The entire algebraic framework rests on a single structural fact: that a defect distribution generated by a fixed wave mode is polarised along the corresponding spectral eigenspace. We isolate and prove this preservation property, which underlies the mode-separation theorems below.

The polarisation identity is a consequence of the following localisation principle, which lifts the annihilation of the PDE residual to the symbol level. Its scalar prototype---the localisation principle for H-distributions of finite anisotropic order---is established in \cite{antonic2021}; we record the operator-valued extension needed here, whose only additional ingredient is the entrywise matrix assembly of the scalar statement.

\begin{proposition}[Localisation principle for anisotropic operator H-distributions]
\label{prop:localization}
Let $P(x,D)$ act on $\mathbb{C}^N$-valued functions in \emph{divergence form}
\begin{equation*}
    P(x,D)u = \sum_{\abs{\alpha}\le m} \partial^\alpha\big(A_\alpha(x)\,u\big),
\end{equation*}
with principal-part coefficients $A_\alpha \in C(\Omega; M_N(\mathbb{C}))$ $(\abs{\alpha}=m)$ and bounded lower-order coefficients, and matrix principal symbol
\begin{equation*}
    p(x,\xi) = \sum_{\abs{\alpha}=m} (2\pi i\xi)^\alpha\, A_\alpha(x).
\end{equation*}
Let $1 < p < \infty$, let $(u_n)$ be bounded with $u_n \rightharpoonup 0$ in $\mathrm{L}^p_{loc}(\Omega;\mathbb{C}^N)$, and take the canonical companion sequence $v_n := \abs{u_n}^{p-2}u_n$, which is bounded in $\mathrm{L}^{p'}_{loc}$ (since $\abs{v_n}^{p'} = \abs{u_n}^{p}$) with $\tfrac1p+\tfrac1{p'}=1$, as noted in \cite{antonic2021, antonic2018}; let $\mu \in \D_{0,d(\kappa_0+2)}'(\Omega\times S^{d-1}, \mathcal{L}^1(H))$ be the operator H-distribution generated by the pair $(u_n, v_n)$, where $\kappa_0 = \lfloor d/2\rfloor+1$ is the H\"ormander--Mihlin regularity index of the multiplier symbols and the frequency-order bound $d(\kappa_0+2)$ is furnished by the version of the Schwartz kernel theorem employed in \cite{antonic2021}. If the PDE residual is asymptotically negligible,
\begin{equation*}
    P(x,D)u_n \longrightarrow 0 \qquad\text{in } \mathrm{W}^{-m,p}_{loc}(\Omega;\mathbb{C}^N),
\end{equation*}
then, with the frequency variable restricted to the cosphere $S^{d-1}$,
\begin{equation*}
    p(x,\xi)\,\mu = 0 \qquad\text{in } \D_{0,d(\kappa_0+2)}'(\Omega\times S^{d-1}, \mathcal{L}^1(H)).
\end{equation*}
\end{proposition}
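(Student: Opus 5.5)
The plan is to reduce the operator-valued statement to the scalar localisation principle for H-distributions of finite anisotropic order established in \cite{antonic2021}, applied entrywise. First fix coordinates. Choose a basis of $\mathbb{C}^N$ and write the operator H-distribution as a matrix $\mu = (\mu_{jk})_{j,k=1}^N$. Here $\mu_{jk} \in \D_{0,d(\kappa_0+2)}'(\Omega\times S^{d-1})$ is the scalar H-distribution generated by the component pair $(u_n^j, v_n^k)$. Explicitly, for $\varphi_1 \in C_c^\infty(\Omega)$ and $\psi \in C^{\kappa}(S^{d-1})$, it is the limit of $\int \mathcal{A}_\psi(\varphi_1 u_n^j)\,\overline{\varphi_2 v_n^k}\,dx$, with the Schwartz kernel theorem of \cite{antonic2021} giving the stated order. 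The product $p(x,\xi)\mu$ is then the matrix whose $(i,k)$ entry is $\sum_{\abs{\alpha}=m}\sum_j (2\pi i\xi)^\alpha A_\alpha^{ij}(x)\,\mu_{jk}$. This is well defined because the multipliers are continuous in $x$ and smooth in $\xi$ on the cosphere, so multiplication maps $\D_{0,\kappa}'$ to itself. It therefore suffices to show that each entry vanishes.

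Next I would carry out the scalar localisation argument row by row. Fix $i$ and set $f_n^i := (P(x,D)u_n)^i = \sum_{\abs{\alpha}\le m}\sum_j \partial^\alpha(A_\alpha^{ij}u_n^j)$, which tends to $0$ in $W^{-m,p}_{loc}$. Multiply by $\varphi_1$ and apply the order-zero Fourier multiplier with symbol $\psi(\xi/\abs{\xi})\,\xi^{\beta}/\abs{\xi}^{m}$ for $\abs{\beta}=m$ (more precisely, the operator $\mathcal{A}_{\psi\,\xi^\beta/\abs{\xi}^m}\circ(1-\Delta)^{-m/2}$-type regularisation), and test against $\overline{\varphi_2 v_n^k}$. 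The right-hand side tends to zero because $\mathcal{A}\circ\partial^\beta(1-\Delta)^{-m/2}$ maps $W^{-m,p}$ boundedly into $L^p$ by H\"ormander--Mihlin, while $\varphi_2 v_n^k$ stays bounded in $L^{p'}$. On the left-hand side, the lower-order terms yield compact operators (Rellich together with $u_n\rightharpoonup 0$), so they vanish in the limit. Commuting $\varphi_1$ and the continuous coefficients $A_\alpha^{ij}$ past the multiplier costs only compact commutators, by the first commutation lemma in its $L^p$ form from \cite{antonic2018, antonic2021}. What survives is $\sum_{\abs{\alpha}=m}\sum_j\langle \mu_{jk}, \varphi_1\overline{\varphi_2}A^{ij}_\alpha\,(2\pi i\xi)^\alpha\psi\rangle = 0$. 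Since such products are dense in $C_c^{0,\kappa}$, this gives $(p\mu)_{ik}=0$.

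The hard step is the commutator handling. The coefficients $A_\alpha$ are only continuous, so I would first approximate them uniformly on compacts by smooth coefficients. The error is controlled by $\norm{A-A^\varepsilon}_\infty$ times uniform $L^p$–$L^{p'}$ bounds. The delicate point is that this estimate must be transferred to the limit distribution in the anisotropic norm rather than a measure norm; this is where the bound of order $d(\kappa_0+2)$ from \cite{antonic2021} has to be invoked uniformly in $\varepsilon$. Once that is done, matrix assembly over $i,k$ is purely algebraic and yields $p(x,\xi)\mu=0$.
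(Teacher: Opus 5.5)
Your proposal is correct and follows essentially the same route as the paper: an entrywise reduction to the scalar H-distributions of the component pairs $(u_n^j,v_n^k)$, an order-$(-m)$ Fourier multiplier applied to the residual, compactness of the commutators with the continuous coefficients via the $\mathrm{L}^p$ commutation lemma of \cite{antonic2018}, Rellich for the lower-order terms, and identification of the surviving limit with the entries of $p\mu$. The differences are cosmetic. You use the Bessel potential $(1-\Delta)^{-m/2}$ instead of the paper's homogeneous $\Lambda^{-m}$, which sidesteps the low-frequency singularity of $\abs{\xi}^{-m}$ at the price of a compact symbol correction, and you spell out the smoothing of the continuous coefficients, which needs only the order-zero continuity in $x$ of the fixed limit $\mu$. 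The stray factor $\xi^\beta$ and the claim that $\mathcal{A}\circ\partial^\beta(1-\Delta)^{-m/2}$ maps $\mathrm{W}^{-m,p}$ into $\mathrm{L}^p$ should be dropped, since that operator has order zero; it is $(1-\Delta)^{-m/2}$ followed by the order-zero multiplier that does the job.
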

\begin{proof}
The $\mathrm{L}^p$-boundedness of the Fourier multiplier $T_\psi$ with $0$-homogeneous symbol $\psi(\xi/\abs{\xi})$ requires the H\"ormander--Mihlin regularity $\psi \in C^{\kappa_0}(S^{d-1})$, $\kappa_0 = \lfloor d/2\rfloor+1$, with $\norm{T_\psi}_{\mathrm{L}^p\to\mathrm{L}^p} \le C_d\,\norm{\psi}_{C^{\kappa_0}(S^{d-1})}$; the resulting operator H-distribution $\mu$ then has anisotropic order $(0, d(\kappa_0+2))$, the frequency-order bound $d(\kappa_0+2)$ arising from the version of the Schwartz kernel theorem used in \cite{antonic2021}. Its scalar entries $\mu_{kl}$ ($1\le k,l\le N$) are the H-distributions of the component pairs $(u_n^k, v_n^l)$ in the sense of \cite{antonic2021}, obeying, for $\varphi\in C_c^\infty(\Omega)$ and $\psi\in C^{\kappa_0}(S^{d-1})$,
\begin{equation}
\label{eq:hdist_def}
    \inner{\mu_{kl}}{\varphi\otimes\psi} = \lim_n \int_\Omega T_\psi(\varphi\, u_n^k)\,\overline{\varphi\, v_n^l}\,dx .
\end{equation}
Set $\Lambda^{-m} := T_{\abs{2\pi\xi}^{-m}}$ and, for $\abs{\alpha}=m$, $\psi_\alpha(\xi) := (2\pi i\xi)^\alpha\abs{2\pi\xi}^{-m}$, a symbol homogeneous of degree $0$ and smooth off the origin; thus $\tilde p(x,\xi) := p(x,\xi)\abs{2\pi\xi}^{-m} = \sum_{\abs{\alpha}=m}\psi_\alpha(\xi)A_\alpha(x)$ restricts to a smooth (hence admissible) test symbol on $S^{d-1}$. Applying $\Lambda^{-m}$ to the divergence-form residual and using the operator identity $\Lambda^{-m}\partial^\alpha = T_{\psi_\alpha}$,
\begin{equation}
\label{eq:comm_split}
    \Lambda^{-m} P(x,D)u_n = \sum_{\abs{\alpha}=m} A_\alpha(x)\,T_{\psi_\alpha}u_n + R u_n,
    \qquad R := \sum_{\abs{\alpha}=m}\big[T_{\psi_\alpha}, A_\alpha\big] + \Lambda^{-m}\!\!\sum_{\abs{\alpha}<m} \partial^\alpha A_\alpha .
\end{equation}
Here we used $T_{\psi_\alpha}(A_\alpha u_n) = A_\alpha T_{\psi_\alpha}u_n + [T_{\psi_\alpha}, A_\alpha]u_n$. Each first commutator $[T_{\psi_\alpha}, A_\alpha]$ of the order-zero Fourier multiplier $T_{\psi_\alpha}$ with multiplication by the continuous matrix $A_\alpha$ is compact on $\mathrm{L}^p_{loc}$ by the commutation lemma of \cite{antonic2018} (see also \cite[Ch.~I]{misur2017}), whose $\mathrm{L}^2$ prototype is Tartar's first commutation lemma \cite{tartar1990}; each lower-order term $\Lambda^{-m}\partial^\alpha A_\alpha$ ($\abs{\alpha}<m$) is a locally regularising operator of order $\le -1$, hence compact from $\mathrm{L}^p_{loc}$ to $\mathrm{L}^p_{loc}$ by Rellich. Thus $R$ is compact.

Since $P u_n \to 0$ in $\mathrm{W}^{-m,p}_{loc}$ and $\Lambda^{-m}:\mathrm{W}^{-m,p}_{loc}\to\mathrm{L}^p_{loc}$ is continuous, $\Lambda^{-m}Pu_n \to 0$ strongly in $\mathrm{L}^p_{loc}$; and $R u_n \to 0$ strongly in $\mathrm{L}^p_{loc}$ because $R$ is compact and $u_n \rightharpoonup 0$. Hence, entrywise,
\begin{equation}
\label{eq:strong_null}
    \sum_{l}\sum_{\abs{\alpha}=m}(A_\alpha)_{kl}(x)\,T_{\psi_\alpha}u_n^l \longrightarrow 0 \quad\text{strongly in } \mathrm{L}^p_{loc},\qquad 1\le k\le N .
\end{equation}
Fix $k, l'$, $\varphi\in C_c^\infty(\Omega)$ and $\psi\in C^{\kappa_0}(S^{d-1})$. Pairing \eqref{eq:strong_null} (multiplied by $\varphi$) with the bounded companion factor $\overline{T_{\bar\psi}(\varphi\, v_n^{l'})} \in \mathrm{L}^{p'}_{loc}$ sends the left side to $0$ (strong $\times$ bounded). Moving the continuous coefficient $(A_\alpha)_{kl}$ and the cut-off $\varphi$ across the multipliers modulo compact errors---again by \cite{antonic2018}---and composing $T_\psi T_{\psi_\alpha}=T_{\psi\psi_\alpha}$, the same manipulation as in the proof of the scalar localisation principle for anisotropic H-distributions \cite{antonic2021} (applied here componentwise to each scalar entry $\mu_{kl}$ defined by \eqref{eq:hdist_def}) identifies the limit as
\begin{equation*}
    \sum_{l}\Big\langle \Big(\sum_{\abs{\alpha}=m} \psi_\alpha\,A_\alpha\Big)_{kl}\,\mu_{l l'},\ \varphi^2\otimes\psi \Big\rangle
    = \big\langle (\tilde p\,\mu)_{k l'},\ \varphi^2\otimes\psi\big\rangle .
\end{equation*}
Therefore $\langle(\tilde p\,\mu)_{kl'}, \varphi^2\otimes\psi\rangle = 0$ for all $k,l',\varphi,\psi$, i.e. $\tilde p(x,\xi)\,\mu = 0$; multiplying by the nonvanishing scalar $\abs{2\pi\xi}^{m}$ gives $p(x,\xi)\,\mu = 0$.
\end{proof}

\begin{lemma}[Intrinsic polarization]
\label{lem:intrinsic_polarization}
Let $P(x,D)$ be the governing PDE system of order $m$, with Hermitian principal symbol $p(x,\xi)$, and let $(u_n)$ be a generating sequence subject to the following standing hypotheses:
\begin{enumerate}
    \item[(i)] $(u_n)$ is bounded and weakly null in the underlying (locally reflexive) function space, and generates the operator-valued defect distribution $\mu \in \D_{0,\kappa}'(\mathcal{M} \setminus \mathcal{C}, \mathcal{L}^1(H))$, with anisotropic order $\kappa = d(\kappa_0+2)$ furnished by Proposition~\ref{prop:localization} (where $\kappa_0 = \lfloor d/2\rfloor+1$); this is the concrete value of the generic order $\kappa$ used in Definition~\ref{def:matrix_projectors};
    \item[(ii)] the PDE residual is asymptotically negligible, $P(x,D) u_n \to 0$ strongly in $\mathrm{W}^{-m,p}_{loc}$ (equivalently, the residual is relatively compact there);
    \item[(iii)] $(u_n)$ is microlocally concentrated on the single characteristic sheet $\Sigma_\mu = \{\lambda_\mu(x,\xi) = 0\}$, i.e. the microlocal support of $\mu$ is contained in $\Sigma_\mu \setminus \mathcal{C}$.
\end{enumerate}
Let $\Pi_\mu(x,\xi)$ be the orthogonal spectral projector onto the associated eigenspace $\ker p(x,\xi) = \text{Range}\,\Pi_\mu(x,\xi)$, which is smooth on $\mathcal{M} \setminus \mathcal{C}$ by Definition~\ref{def:crossing_set}. Then, as $\mathcal{L}^1(H)$-valued distributions,
\begin{equation*}
    \Pi_\mu \mu = \mu.
\end{equation*}
If, in addition, $\mu$ is Hermitian-valued---as for the $L^2$ $H$-measure generated by a single vector sequence---then also $\mu \Pi_\mu = \mu$.
\end{lemma}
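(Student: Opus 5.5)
The plan is to derive $\Pi_\mu\mu=\mu$ from the localisation principle (Proposition~\ref{prop:localization}), $p(x,\xi)\mu=0$, by inverting $p$ off its kernel. All manipulations stay at the level of pointwise symbol multiplication on $\mathcal{M}\setminus\mathcal{C}$. Hypotheses (i)--(ii) are exactly those of Proposition~\ref{prop:localization}, so we obtain $p\,\mu=0$ in $\D_{0,\kappa}'$, with $\kappa=d(\kappa_0+2)$. We restrict this identity to test symbols compactly supported in $\mathcal{M}\setminus\mathcal{C}$, which is legitimate because $\mu$ is supported there by (iii).

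The key construction is a smooth partial inverse of $p$ on the complement of its kernel. On $\mathcal{M}\setminus\mathcal{C}$ the eigenvalues have locally constant multiplicity, and by (iii) the relevant sheet is $\lambda_\mu=0$. Hence on a neighbourhood of $\operatorname{supp}\mu$ the remaining distinct eigenvalues $\lambda_k$, $k\neq\mu$, are bounded away from zero on each compact set. We define
\begin{equation*}
  Q(x,\xi)=\sum_{k\neq\mu}\lambda_k(x,\xi)^{-1}\Pi_k(x,\xi),
\end{equation*}
equivalently $Q=\frac{1}{2\pi i}\oint_{\Gamma} z^{-1}(zI-p)^{-1}\,dz$ with $\Gamma$ encircling the nonzero spectrum. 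By Definition~\ref{def:crossing_set}, $Q$ is smooth (indeed real-analytic) there. By the spectral theorem for the Hermitian $p$ we get the pointwise algebraic identity $Q\,p=I-\Pi_\mu$. In infinite dimensions one uses instead the uniform spectral gap of Remark~\ref{rem:finite_infinite}, so that $Q$ is bounded and smooth.

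Next we multiply $p\mu=0$ on the left by $Q$. For a scalar test function $\phi$ supported in $\mathcal{M}\setminus\mathcal{C}$, left multiplication of $\mu$ by a smooth matrix symbol $A$ is defined by duality on the internal factor, $\langle A\mu,\Psi\rangle=\langle\mu,\Psi A\rangle$. Since $\phi$ has compact support, the product $\phi\,Q$ lies in the test class $C_c^{0,\kappa}\widehat\otimes\mathcal{L}(H)$, and associativity of pointwise matrix products gives $Q(p\mu)=(Qp)\mu$. Therefore $0=(I-\Pi_\mu)\mu$, which is $\Pi_\mu\mu=\mu$.

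For the Hermitian addendum, note that $\mu^*=\mu$ and $\Pi_\mu^*=\Pi_\mu$. Taking adjoints, $\mu\Pi_\mu=(\Pi_\mu\mu)^*=\mu^*=\mu$, where the adjoint of an operator-valued distribution is $\langle\mu^*,\phi\rangle=\langle\mu,\bar\phi\rangle^*$.

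The main obstacle is the justification that left multiplication by the smooth but non-polynomial symbol $Q$ acts continuously on $\D_{0,\kappa}'$ and commutes with the identity $p\mu=0$. This requires that $\phi Q$, together with its $\xi$-derivatives up to order $\kappa$, stay bounded on $\operatorname{supp}\phi$, which follows from compactness and the spectral gap. It also requires that the conclusion extend from $C_c^\infty$ test functions to the full $C_c^{0,\kappa}$ test space. For that extension we would use density of smooth symbols in the anisotropic test space together with the $C_b^{0,\kappa}$ continuity of $\mu$.
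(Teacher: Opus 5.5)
Your proposal is correct and is essentially the paper's argument. The paper likewise obtains $p\mu=0$ from Proposition~\ref{prop:localization} and writes $p=\sum_j\lambda_j\Pi_j$. It then left-multiplies by each $\Pi_k$ and implicitly divides by $\lambda_k$ to get $\Pi_k\mu=0$ for $k\neq\mu$, which is your partial inverse $Q=\sum_{k\neq\mu}\lambda_k^{-1}\Pi_k$ applied in one step; the Hermitian addendum is the same adjoint argument.

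Your localisation of $Q$ to a neighbourhood of $\operatorname{supp}\mu$ is, if anything, more careful than the paper's claim that $\lambda_j\neq0$ for $j\neq\mu$ throughout $\mathcal{M}\setminus\mathcal{C}$, which fails on any other characteristic sheet. To finish that step, add a cutoff equal to $1$ near $\operatorname{supp}\mu$ and use hypothesis (iii) on the complement.
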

\begin{proof}
Hypotheses (i)--(ii) are precisely those of the localisation principle in the operator form of Proposition~\ref{prop:localization} (its $L^2$ prototype being the localisation principle for H-measures / microlocal defect measures of \cite{tartar1990, gerard1991}), which yields $p(x,\xi)\, \mu = 0$ as an $\mathcal{L}^1(H)$-valued distribution on $\mathcal{M} \setminus \mathcal{C}$. Off the crossing set $\mathcal{C}$, the eigenvalue $\lambda_\mu$ has constant multiplicity, so by the Spectral Theorem and standard matrix perturbation theory $p$ admits the smooth decomposition $p = \sum_j \lambda_j \Pi_j$ into mutually orthogonal smooth eigenprojectors, with $\Pi_\mu$ the projector onto $\ker p = \text{Range}\, \Pi_\mu$ and $\lambda_j \neq 0$ for $j \neq \mu$ throughout $\mathcal{M} \setminus \mathcal{C}$. The constraint $p\mu = 0$ then reads $\sum_{j} \lambda_j \Pi_j \mu = 0$; left-multiplying by the orthogonal projector $\Pi_k$ isolates $\lambda_k \Pi_k \mu = 0$, so $\Pi_k \mu = 0$ for every $k \neq \mu$. Hence $(I - \Pi_\mu)\mu = \sum_{j \neq \mu} \Pi_j \mu = 0$, that is $\Pi_\mu \mu = \mu$. When $\mu^* = \mu$, taking adjoints gives $\mu \Pi_\mu = (\Pi_\mu \mu)^* = \mu^* = \mu$.
\end{proof}

We emphasise that only the left identity $\Pi_\mu \mu = \mu$ is invoked in the structural theorems below, so the framework applies verbatim to genuinely non-Hermitian higher-order $H$-distributions, for which measure-theoretic positivity---and hence the Hermitian symmetry---is unavailable.

The left polarization identity has the following operational consequence, which is the single mechanism behind every algebraic collapse in the structural theorems below.

\begin{lemma}[Right insertion of the polarization projector]
\label{lem:right_insertion}
Let $\mu \in \D_{0,\kappa}'(\mathcal{M} \setminus \mathcal{C}, \mathcal{L}^1(H))$ satisfy the left polarization identity $\Pi_\mu \mu = \mu$ of Lemma~\ref{lem:intrinsic_polarization}. Then for every operator-valued test symbol $\Psi \in C_c^{0,\kappa}(\mathcal{M} \setminus \mathcal{C}, \mathcal{L}(H))$,
\begin{equation*}
    \inner{\mu}{\Psi \Pi_\mu} = \inner{\mu}{\Psi}.
\end{equation*}
\end{lemma}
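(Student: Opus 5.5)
The plan is to prove the identity first on elementary tensors, using the definition of the trace pairing, and then extend it by density and continuity.

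\emph{Elementary tensors.} Take $\Psi = B \otimes \phi$ with $B \in \mathcal{L}(H)$ and $\phi \in C_c^{0,\kappa}(\mathcal{M} \setminus \mathcal{C})$ scalar. The product $\Psi \Pi_\mu$ is not itself an elementary tensor, because $\Pi_\mu$ depends on $(x,\xi)$. So I will not write $\inner{\mu}{\Psi\Pi_\mu} = \text{tr}(B\Pi_\mu \inner{\mu}{\phi})$ directly, since that would move the projector outside $\mu$. Instead I read the hypothesis $\Pi_\mu \mu = \mu$ in the only sense it can carry for finite-order $\mu$: it is the $\mathcal{L}^1(H)$-valued distribution $\Pi_\mu\mu$, acting through the module action of smooth matrix symbols. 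Concretely, for every operator-valued test symbol $\Theta$,
\begin{equation*}
\inner{\Pi_\mu \mu}{\Theta} := \inner{\mu}{\Theta \Pi_\mu}.
\end{equation*}
This is the transpose of left multiplication under the trace. At the level of internal matrices it is cyclicity of the trace: $\text{tr}(\Theta\,\Pi_\mu T) = \text{tr}((\Theta\Pi_\mu) T)$, applied pointwise in $(x,\xi)$.

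With this reading the identity is almost tautological. We get
\begin{equation*}
\inner{\mu}{\Psi\Pi_\mu} = \inner{\Pi_\mu\mu}{\Psi} = \inner{\mu}{\Psi}.
\end{equation*}
The plan therefore has three steps.

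First, I check that $\Theta \mapsto \Theta\Pi_\mu$ maps $C_c^{0,\kappa}(\mathcal{M} \setminus \mathcal{C}, \mathcal{L}(H))$ continuously into itself. This holds because $\Pi_\mu$ is real-analytic on $\mathcal{M} \setminus \mathcal{C}$ (Definition~\ref{def:crossing_set}). By the Leibniz rule, on each compact $K$ we have
\begin{equation*}
\norm{\Theta\Pi_\mu}_{C_b^{0,\kappa}} \le C_K \norm{\Theta}_{C_b^{0,\kappa}},
\end{equation*}
where $C_K$ depends on $\max_{\abs{\beta}\le\kappa}\sup_K \norm{\partial_\xi^\beta \Pi_\mu}$. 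So $\Pi_\mu\mu$ is again a well-defined element of $\D_{0,\kappa}'$.

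Second, I verify that this module action agrees with the elementary-tensor definition of the pairing. Approximate $\Pi_\mu$ on $\operatorname{supp}\Psi$ in $C^{0,\kappa}$ by finite sums $\sum_j E_j\,\eta_j(x,\xi)$, with constant matrices $E_j$ (for instance the matrix units) and scalar coefficient functions $\eta_j$. Finite dimensionality of $H$ (Remark~\ref{rem:finite_infinite}) makes this exact: take $\eta_j$ to be the entries of $\Pi_\mu$ times a cutoff. This gives
\begin{equation*}
\inner{\mu}{\Psi\Pi_\mu} = \sum_j \text{tr}\big(B E_j \inner{\mu}{\phi\eta_j}\big).
\end{equation*}
On the other side, the left polarization identity $\Pi_\mu \mu = \mu$, tested entrywise, says precisely that
\begin{equation*}
\sum_j E_j \inner{\mu}{\phi\eta_j} = \inner{\mu}{\phi}.
\end{equation*}
Substituting gives $\text{tr}(B\inner{\mu}{\phi}) = \inner{\mu}{\Psi}$.

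Third, both sides of the claimed identity are linear in $\Psi$ and $C_b^{0,\kappa}$-continuous (by the first step). Elementary tensors span a dense subspace of the completed tensor product. Hence the identity extends to all $\Psi$.

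The main obstacle is the second step: pinning down that the abstract statement $\Pi_\mu\mu=\mu$ means exactly the entrywise identity $\sum_j E_j\inner{\mu}{\phi\eta_j}=\inner{\mu}{\phi}$. In other words, the multiplication of $\mu$ by a non-constant matrix symbol must be the one produced by Proposition~\ref{prop:localization} and Lemma~\ref{lem:intrinsic_polarization}, where $p\mu$ is formed entrywise as $\sum_l p_{kl}\mu_{ll'}$. I will check this explicitly. Writing $\Pi_\mu = \sum_{k,l}(\Pi_\mu)_{kl}E_{kl}$, the entrywise product $(\Pi_\mu\mu)_{kl'} = \sum_l (\Pi_\mu)_{kl}\mu_{ll'}$ coincides with the formula above. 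This entrywise reading is also what lets the trace's cyclicity act on the internal factor without any symbol-calculus correction.
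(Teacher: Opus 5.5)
Your proof is correct, and it takes a genuinely different and more careful route than the paper's.

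\textbf{The paper's argument.} It works on an elementary symbol and writes the pairing as $\text{tr}(\Psi\Pi_\mu T)$, with $T$ ``the $\mathcal{L}^1(H)$-value carried by $\mu$''. It then invokes $\Pi_\mu T = T$. This moves $\Pi_\mu$ outside the action of $\mu$ as though it were a constant matrix. That is literally justified only when $\Pi_\mu$ is constant in $(x,\xi)$ (as in Example~\ref{ex:two_mode}), or for order-zero measures with densities. It is exactly the shortcut you explicitly avoid when you refuse to write $\text{tr}(B\Pi_\mu\inner{\mu}{\phi})$.

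\textbf{Your argument.} You read $\Pi_\mu\mu$ as the entrywise product $(\Pi_\mu\mu)_{kl'}=\sum_l(\Pi_\mu)_{kl}\mu_{ll'}$. This is the structure Proposition~\ref{prop:localization} actually delivers, and the proof of Lemma~\ref{lem:intrinsic_polarization} silently relies on it, since it needs $\Pi_k(p\mu)=(\Pi_k p)\mu$. Through the matrix-unit expansion of $\Pi_\mu$ you check that, under the trace pairing, this left action is the transpose of right multiplication of the test symbol: $\inner{\Pi_\mu\mu}{\Theta}=\inner{\mu}{\Theta\Pi_\mu}$. Your Leibniz bound makes $\Theta\mapsto\Theta\Pi_\mu$ continuous on $C_c^{0,\kappa}$, so the lemma becomes a restatement of the hypothesis. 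Your index bookkeeping is right.

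\textbf{Comparison.} Your version gives a rigorous argument for variable projectors at any finite order. Your continuity estimate uses only $\partial_\xi^\beta\Pi_\mu$ for $\abs{\beta}\le\kappa$ and continuity in $x$. So you do not actually need the real-analyticity you cite, which in any case is unavailable in $x$ under the merely continuous coefficients of Proposition~\ref{prop:localization}. The paper's version buys brevity, but outside the constant-projector case it is best read as shorthand for your module computation.

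\textbf{Infinite dimensions.} The only place you use $\dim H<\infty$ is the exact matrix-unit expansion in your second step. For infinite-dimensional $H$ you have two options. You can keep the transposition formula of your first step as the definition of $\Pi_\mu\mu$, which needs no dimension count. Alternatively, approximate the finite-rank projector $\Pi_\mu$ on compacts by finite sums of elementary tensors in $C^{0,\kappa}$ and pass to the limit.
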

\begin{proof}
By the tensor factorisation of the pairing recorded after Definition~\ref{def:matrix_projectors}, the trace contracts the internal factor $H$ independently of the phase-space action of $\mu$; on an elementary symbol the pairing $\inner{\mu}{\Psi \Pi_\mu}$ is therefore computed as $\text{tr}(\Psi \Pi_\mu\, T)$ with $T$ the $\mathcal{L}^1(H)$-value carried by $\mu$. The left identity $\Pi_\mu \mu = \mu$ gives $\Pi_\mu T = T$, whence $\text{tr}(\Psi \Pi_\mu T) = \text{tr}(\Psi T)$. Extending by linearity and $C_b^{0,\kappa}$-continuity to all operator-valued symbols yields $\inner{\mu}{\Psi \Pi_\mu} = \inner{\mu}{\Psi}$.
\end{proof}

\begin{remark}
\label{rem:def_equivalence}
Lemma~\ref{lem:intrinsic_polarization} clarifies the logical content of Definition~\ref{def:matrix_projectors}. Since $\mu = \Pi_\mu \mu$ and $\nu = \Pi_\nu \nu$, the geometric condition $\Pi_\mu \Pi_\nu = 0$ implies both analytic pairing conditions $\inner{\mu}{\Pi_\nu \Phi} = \inner{\nu}{\Pi_\mu \Phi} = 0$ for every admissible $\Phi$ (this is the content of Theorem~\ref{thm:mode_consist_matrix_proj}); conversely, the pairing conditions express that the polarizations of $\mu$ and $\nu$ are mutually orthogonal on their microlocal supports. We retain both formulations deliberately: the algebraic identity $\Pi_\mu \Pi_\nu = 0$ drives the structural proofs below, while the pairing conditions are the quantities verified against concrete generating sequences.
\end{remark}

\begin{theorem}[Symmetry]
\label{thm:symm_matrix_proj}
For any $\mu, \nu \in \D_{0,\kappa}'(\mathcal{M}, \mathcal{L}^1(H))$, if $\mu \perp \nu$, then $\nu \perp \mu$.
\end{theorem}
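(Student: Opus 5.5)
The plan is to read the symmetry directly off Definition~\ref{def:matrix_projectors}. That definition does not single out one distribution as the ``first'' argument. It imposes a pair of conditions that already has the symmetric form
\begin{equation*}
    \inner{\mu}{\Pi_\nu \Phi} = 0, \qquad \inner{\nu}{\Pi_\mu \Phi} = 0 \qquad \text{for all } \Phi \in C_c^\infty(\mathcal{M}\setminus\mathcal{C}),
\end{equation*}
together with the geometric side condition $\Pi_\mu \Pi_\nu = 0$ on the spectral projectors. To prove $\nu \perp \mu$ I will write out the definition with the roles of $\mu$ and $\nu$ swapped and check each resulting requirement against what $\mu \perp \nu$ already supplies.

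The first check is the standing hypotheses. Definition~\ref{def:matrix_projectors} requires that the microlocal supports of both distributions avoid $\mathcal{C}$, and that $\Pi_\mu, \Pi_\nu$ be the smooth Riesz projectors of Definition~\ref{def:crossing_set} attached to the two modes. Both conditions treat $\mu$ and $\nu$ identically, so they carry over to the swapped pair unchanged.

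The second check is the pairing conditions. After swapping they become $\inner{\nu}{\Pi_\mu \Phi} = 0$ and $\inner{\mu}{\Pi_\nu \Phi} = 0$. These are exactly the two identities given by $\mu \perp \nu$, listed in the opposite order, so nothing needs to be computed.

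The only step that needs an actual argument is the side condition. The swapped definition asks for $\Pi_\nu \Pi_\mu = 0$, whereas the hypothesis gives $\Pi_\mu \Pi_\nu = 0$, and matrix multiplication does not commute. I would take adjoints pointwise at each fixed $(x,\xi) \in \mathcal{M}\setminus\mathcal{C}$. Because $p(x,\xi)$ is Hermitian, its Riesz projectors are orthogonal projectors, so $\Pi_\mu^* = \Pi_\mu$ and $\Pi_\nu^* = \Pi_\nu$. It follows that
\begin{equation*}
    \Pi_\nu \Pi_\mu = \Pi_\nu^* \Pi_\mu^* = (\Pi_\mu \Pi_\nu)^* = 0 .
\end{equation*}
Equivalently, the two projectors are distinct spectral projectors of the same Hermitian matrix, and such projectors annihilate each other in either order.

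As in the remark following Definition~\ref{def:matrix_projectors}, all of these products are pointwise symbol multiplications, not pseudo-differential compositions, so no commutator corrections arise. This self-adjointness step is the one place where a hypothesis is genuinely used. Everything else amounts to relabelling the two conditions, in the same spirit as the substitution $\chi \mapsto 1-\chi$ in Theorem~\ref{thm:symmetry_radon}.
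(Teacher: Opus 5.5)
Your proposal is correct and follows the paper's argument: swapping $\mu$ and $\nu$ simply exchanges the two pairing conditions of Definition~\ref{def:matrix_projectors}, and the side condition is unaffected because orthogonal projectors are self-adjoint. Your explicit step $\Pi_\nu\Pi_\mu = (\Pi_\mu\Pi_\nu)^* = 0$ just spells out what the paper's one-line proof leaves implicit; the paper uses the same step verbatim in the proof of Theorem~\ref{thm:mode_consist_matrix_proj}.
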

\begin{proof}
The symmetry is intrinsically encoded in the algebraic definition. Because $\Pi_\mu$ and $\Pi_\nu$ are orthogonal projector matrices, $\Pi_\mu \Pi_\nu = 0$. Swapping their roles yields the exact same pair of geometric requirements.
\end{proof}

\begin{theorem}[Smooth Module Structure]
\label{thm:module_matrix_proj}
If $\mu \perp \nu$, then for any smooth non-vanishing scalar functions $f, g \in C^\infty(\Omega)$, the modulated distributions remain orthogonal: $f\mu \perp g\nu$.
\end{theorem}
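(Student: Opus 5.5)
The plan is to verify the two pairing conditions of Definition~\ref{def:matrix_projectors} for the pair $(f\mu, g\nu)$ directly, using the same projectors $\Pi_\mu, \Pi_\nu$. The first step is to check that the modulated distributions are admissible. Multiplication by a smooth scalar $f(x)$, which is independent of $\xi$, defines $f\mu$ through $\inner{f\mu}{\Psi} := \inner{\mu}{f\Psi}$. For $\Psi \in C_c^{0,\kappa}(\mathcal{M}\setminus\mathcal{C}, \mathcal{L}(H))$ we have $f\Psi$ in the same class, with
\begin{equation*}
    \norm{f\Psi}_{C_b^{0,\kappa}} \le \norm{f}_{L^\infty(K)}\,\norm{\Psi}_{C_b^{0,\kappa}}
\end{equation*}
on each compact $K$. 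This works because $f$ carries no frequency dependence, so $\partial_\xi^\alpha(f\Psi) = f\,\partial_\xi^\alpha\Psi$. Consequently $f\mu \in \D_{0,\kappa}'$, and its microlocal support is contained in that of $\mu$, so it stays disjoint from $\mathcal{C}$.

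The second step is to fix which projectors the modulated distributions carry. Because $f$ is a scalar, it commutes with every matrix symbol, so the left polarization identity transfers: $\Pi_\mu(f\mu) = f\,\Pi_\mu\mu = f\mu$. Since $f$ is non-vanishing, the microlocal support of $f\mu$ equals that of $\mu$. Hence $f\mu$ lies on the same characteristic sheet and is canonically attached to the same spectral projector, $\Pi_{f\mu} = \Pi_\mu$; likewise $\Pi_{g\nu} = \Pi_\nu$. In particular $\Pi_{f\mu}\Pi_{g\nu} = \Pi_\mu\Pi_\nu = 0$ still holds. I expect this identification to be the main obstacle. The definition ties the projector to the mode, that is, to the generating sequence, rather than to the distribution itself. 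I would resolve this by noting that $f\mu$ is generated by the sequence $(f u_n)$, which by Proposition~\ref{prop:localization} satisfies the same localisation constraint $p\,(f\mu)=0$, since $p(x,\xi)$ commutes with the scalar $f$. This is exactly where the non-vanishing hypothesis is used: it guarantees that the support, and hence the mode identification, is unchanged.

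The third step is the pairing computation. For scalar $\Phi \in C_c^\infty(\mathcal{M}\setminus\mathcal{C})$,
\begin{equation*}
    \inner{f\mu}{\Pi_{g\nu}\Phi} = \inner{\mu}{\Pi_\nu (f\Phi)} = 0,
\end{equation*}
because $f\Phi$ is again an admissible scalar test function. If one insists on $C^\infty$ in $(x,\xi)$, one approximates, using that $f\in C^\infty(\Omega)$ makes $f\Phi$ smooth outright. Symmetrically, $\inner{g\nu}{\Pi_{f\mu}\Phi} = \inner{\nu}{\Pi_\mu(g\Phi)} = 0$.

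As a cross-check, the same vanishing follows from Lemma~\ref{lem:right_insertion} together with $\Pi_\nu\Pi_\mu = 0$:
\begin{equation*}
    \inner{\mu}{\Pi_\nu f\Phi} = \inner{\mu}{\Pi_\nu f\Phi\,\Pi_\mu} = 0.
\end{equation*}
This shows the conclusion does not depend on the particular choice of pairing condition.
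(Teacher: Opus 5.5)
Your proposal is correct and follows essentially the same route as the paper: identify $\Pi_{g\nu}=\Pi_\nu$ (and $\Pi_{f\mu}=\Pi_\mu$), move the $\xi$-independent scalar $f$ into the test function past the pointwise multiplier $\Pi_\nu$, and apply the hypothesis to the admissible test function $f\Phi$. Your admissibility check, symmetric treatment of both pairings, and right-insertion cross-check only make explicit what the paper leaves implicit. One minor point: the projector identification is cleaner via the pointwise identities $p\,(f\mu)=f\,(p\mu)=0$ and $\Pi_\mu(f\mu)=f\mu$ than via the sequence $(fu_n)$, since with the canonical companion $(fu_n)$ generates $|f|^p\mu$ rather than $f\mu$.
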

\begin{proof}
The characteristic variety of a PDE system is invariant under multiplication by smooth, non-vanishing scalar functions. Consequently, the principal matrix symbols of the projectors remain unchanged: $\Pi_{g\nu} = \Pi_\nu$. Evaluating the trace duality pairing yields $\inner{f\mu}{\Pi_\nu \Phi} = \inner{\mu}{f \Pi_\nu \Phi}$. 

Because $\Pi_\nu$ is a smooth pointwise matrix multiplier on the phase-space manifold $\mathcal{M}$, its operation on the trace commutes exactly with the scalar function $f$. The pairing expands algebraically to:
\begin{equation*}
    \inner{\mu}{\Pi_\nu (f\Phi)} = 0.
\end{equation*}
This vanishes identically because $\mu \perp \nu$ and $f\Phi$ is a valid scalar test function in $C_c^\infty(\mathcal{M} \setminus \mathcal{C})$. The algebraic projection is rigorously preserved without any smoothing errors.
\end{proof}

\begin{theorem}[Coordinate Invariance]
\label{thm:coord_inv_matrix_proj}
Let $\Psi: \Omega \to \tilde{\Omega}$ be a spatial diffeomorphism. If $\mu \perp \nu$ on $\mathcal{M}$, then the pushforward distributions are orthogonal on the target manifold $\tilde{\mathcal{M}}$.
\end{theorem}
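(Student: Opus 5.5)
The plan is to realise the spatial diffeomorphism $\Psi$ as its canonical cotangent lift and to check that everything in Definition~\ref{def:matrix_projectors} transforms covariantly under it. I would set
\begin{equation*}
    \widehat\Psi(x,\xi) = \Big(\Psi(x),\ \frac{(D\Psi(x))^{-T}\xi}{\abs{(D\Psi(x))^{-T}\xi}}\Big),
\end{equation*}
which is a diffeomorphism $S^*\Omega \to S^*\tilde\Omega$. Pushforwards of anisotropic distributions are then defined by $\inner{\widehat\Psi_*\mu}{\tilde\Phi} := \inner{\mu}{\tilde\Phi\circ\widehat\Psi}$.

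The first step is to check that this is well defined on $\D_{0,\kappa}'$. The lift acts affinely-projectively in $\xi$ for each fixed $x$. It involves only $D\Psi(x)$ and no $x$-derivatives of the test function. Hence composition maps $C_c^{0,\kappa}(S^*\tilde\Omega)$ continuously into $C_c^{0,\kappa}(S^*\Omega)$. By Fa\`a di Bruno in $\xi$, with coefficients continuous in $x$ and bounded on compacts, the anisotropic norm is preserved up to a constant.

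The second step is the symbol transformation. The pulled-back system $\tilde P$ on $\tilde\Omega$ has principal symbol $\tilde p(\tilde x,\tilde\xi) = p(\widehat\Psi^{-1}(\tilde x,\tilde\xi))$, after absorbing the positive homogeneity factor $\abs{(D\Psi)^{-T}\xi}^m$. A positive scalar rescaling does not change eigenspaces or eigenvalue multiplicities. Therefore $\widehat\Psi$ maps $\mathcal{M}$ onto $\tilde{\mathcal{M}}$ and $\mathcal{C}$ onto $\tilde{\mathcal{C}}$. The Riesz projectors also transport: $\tilde\Pi_\mu = \Pi_\mu\circ\widehat\Psi^{-1}$, and likewise for $\nu$. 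Consequently $\tilde\Pi_\mu\tilde\Pi_\nu = 0$ still holds pointwise, and the microlocal supports of the pushforwards stay off $\tilde{\mathcal{C}}$.

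The third step is the pairing computation. For scalar $\tilde\Phi \in C_c^\infty(\tilde{\mathcal M}\setminus\tilde{\mathcal C})$,
\begin{equation*}
    \inner{\widehat\Psi_*\mu}{\tilde\Pi_\nu\tilde\Phi} = \inner{\mu}{(\tilde\Pi_\nu\circ\widehat\Psi)(\tilde\Phi\circ\widehat\Psi)} = \inner{\mu}{\Pi_\nu\Phi}, \qquad \Phi := \tilde\Phi\circ\widehat\Psi,
\end{equation*}
and $\Phi$ is an admissible test function in $C_c^\infty(\mathcal M\setminus\mathcal C)$. The right-hand side vanishes by hypothesis, and the symmetric identity for $\nu$ is identical. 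Because products are pointwise (as stressed after Definition~\ref{def:matrix_projectors}), composition distributes over the matrix product. The trace pairing on elementary tensors commutes with composition, so no correction terms appear.

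The main obstacle is the second step: making the symbol transformation exact. If the internal fibre also transforms, for instance by a gauge matrix $S(x)$ as in elasticity or Maxwell written in new frames, then $\tilde\Pi = S\,\Pi\,S^{-1}$ is generally no longer orthogonal. I would handle this first in the trivial-fibre case, where the statement concerns a spatial diffeomorphism only and no frame change occurs. I would then remark that a unitary $S(x)$ is harmless, since conjugating by $S$ inside the trace cancels by cyclicity. A non-unitary frame change would need polar decomposition to restore Hermitian symmetry, and I would flag that case rather than prove it.
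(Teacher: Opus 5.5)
Your proposal is correct and follows the same route as the paper. You lift $\Psi$ to the canonical (cospherical) transformation, transport the principal symbol and its spectral projectors by pullback so that $\Pi_\mu\Pi_\nu = 0$ persists on $\tilde{\mathcal{M}}\setminus\tilde{\mathcal{C}}$, and pull the trace pairing back to $\langle \mu, \Pi_\nu \Phi\rangle = 0$. You also fill in details the paper leaves implicit: continuity on $C_c^{0,\kappa}$ because the lift is fibred over $\Psi$, and the positive homogeneity factor leaving eigenspaces and $\mathcal{C}$ unchanged. Your restriction to the componentwise transformation with no frame change is exactly the setting in which the paper's claim that the pulled-back projectors remain orthogonal is valid.
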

\begin{proof}
Applying a spatial diffeomorphism $\Psi: \Omega \to \tilde{\Omega}$ induces a canonical transformation on the cotangent bundle, mapping the characteristic manifold $\mathcal{M}$ to $\tilde{\mathcal{M}}$. Under this transformation, matrix principal symbols behave as tensors and transform precisely via pullback.

Because the symbols $\Pi_\mu$ and $\Pi_\nu$ are strictly orthogonal projectors on $\mathcal{M}$ ($\Pi_\mu \Pi_\nu = 0$), their pullbacks remain exact orthogonal projectors on the mapped characteristic varieties in $\tilde{\mathcal{M}}$. The trace duality pairing pulls back across the diffeomorphism, preserving the zero evaluation.
\end{proof}

\begin{theorem}[Gauge Invariance]
\label{thm:gauge_inv_matrix_proj}
The algebraic matrix orthogonality framework is strictly invariant under smooth, unitary gauge transformations of the vector bundle. If $\mu \perp \nu$, then for any smooth, unitary matrix-valued function $U(x)$ (where $U^{-1} = U^*$) acting on the state space, the gauge-transformed distributions remain orthogonal.
\end{theorem}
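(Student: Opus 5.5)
The plan is to fix the natural meaning of a gauge transformation in this trace-duality setting and then reduce everything to the algebraic identity $\Pi_\mu \Pi_\nu = 0$ together with the right-insertion Lemma~\ref{lem:right_insertion}. A unitary gauge $U(x)$ changes the state variable by $\tilde u_n = U u_n$ and the system by $\tilde P = U P U^{*}$. Its principal symbol is therefore $\tilde p(x,\xi) = U(x)\,p(x,\xi)\,U(x)^{*}$; the derivatives falling on $U$ only produce lower-order terms, which do not affect the principal symbol. The generated operator-valued distribution transforms by conjugation, $\tilde\mu = U \mu U^{*}$, defined on operator-valued test symbols by $\inner{\tilde\mu}{\Psi} := \inner{\mu}{U^{*}\Psi U}$. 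This matches the elementary-tensor definition, since $\text{tr}(B\,U T U^{*}) = \text{tr}(U^{*} B U\, T)$ by pointwise cyclicity of the trace.

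The first step is to show that $U(x)^{*}\Psi U(x)$ is an admissible test symbol whenever $\Psi$ is. Because $U$ depends only on $x$ and is smooth, conjugation adds no frequency derivatives. Hence $\norm{U^{*}\Psi U}_{C_b^{0,\kappa}} \le \norm{\Psi}_{C_b^{0,\kappa}}$, with equality in operator norm, and supports are preserved. In particular $\tilde\mu$ has the same anisotropic order and the same microlocal support off $\mathcal{C}$.

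The second step is to identify the new spectral data. Conjugation preserves eigenvalues and their multiplicities, so $\tilde p$ has the same $r(x,\xi)$ and therefore the same crossing set $\mathcal{C}$ and the same characteristic manifold $\mathcal{M}$. Inserting the conjugation into the Riesz integral of Definition~\ref{def:crossing_set} gives $\tilde\Pi_k = U \Pi_k U^{*}$ via $(zI - U p U^{*})^{-1} = U (zI-p)^{-1} U^{*}$. These projectors are again smooth on $\mathcal{M}\setminus\mathcal{C}$ and remain orthogonal, since $\tilde\Pi_\mu\tilde\Pi_\nu = U\Pi_\mu\Pi_\nu U^{*} = 0$.

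Finally we verify Definition~\ref{def:matrix_projectors} directly. For a scalar $\Phi$,
\begin{equation*}
\inner{\tilde\mu}{\tilde\Pi_\nu \Phi} = \inner{\mu}{U^{*} U \Pi_\nu U^{*} U \Phi} = \inner{\mu}{\Pi_\nu\Phi} = 0,
\end{equation*}
using $U^{*}U = I$ and the fact that the scalar $\Phi$ commutes with the matrices. The same computation with the roles of $\mu$ and $\nu$ swapped gives the second condition. We would also check that the left polarization identity survives, $\tilde\Pi_\mu\tilde\mu = U\Pi_\mu\mu U^{*} = \tilde\mu$, so that Lemma~\ref{lem:right_insertion} still applies in the new gauge.

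The main obstacle is making the transformation $\tilde\mu = U\mu U^{*}$ rigorous rather than formal. One has to justify that the H-distribution generated by $(U u_n, U v_n)$ really is $U\mu U^{*}$. This holds because the $x$-dependent multiplication by $U$ commutes with the Fourier multipliers $T_\psi$ modulo compact operators, by the commutation lemma of \cite{antonic2018} as used in Proposition~\ref{prop:localization}. I would prove this identification entrywise first and then present the rest as pure algebra.
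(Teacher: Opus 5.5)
Your proposal is correct and follows essentially the same route as the paper: conjugate both the distribution and the spectral projector by $U$, cancel $U^{*}U = I$, and use cyclicity of the trace to reduce $\inner{\tilde\mu}{\tilde\Pi_\nu\Phi}$ to $\inner{\mu}{\Pi_\nu\Phi} = 0$. Your version is more careful in three places: you define $\tilde\mu$ by duality, $\inner{\tilde\mu}{\Psi} := \inner{\mu}{U^{*}\Psi U}$, instead of using the integral representation the paper writes down (which it elsewhere admits finite-order distributions lack); you derive $\tilde\Pi_\nu = U\Pi_\nu U^{*}$ from the Riesz integral rather than positing it; and you check that conjugated test symbols remain admissible.
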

\begin{proof}
Under a local unitary gauge transformation $U(x)$, the operator-valued distributions and the spectral projectors transform via conjugation: $\tilde{\mu} = U \mu U^{-1}$ and $\tilde{\Pi}_\nu = U \Pi_\nu U^{-1}$. Because $U$ is unitary, the transformed projector $\tilde{\Pi}_\nu$ rigorously preserves its self-adjoint, orthogonal geometry ($(U \Pi_\nu U^{-1})^* = U \Pi_\nu U^{-1}$).

Evaluating the trace duality pairing for the transformed system against a scalar phase-space test function $\Phi$ yields:
\begin{equation*}
    \inner{\tilde{\mu}}{\tilde{\Pi}_\nu \Phi} = \int \text{tr}\left( U \mu U^{-1} U \Pi_\nu U^{-1} \Phi \right) d\lambda.
\end{equation*}
The inner terms algebraically annihilate: $U^{-1} U = I$. Because $\Phi$ is a scalar test function, it commutes with the matrix operators. Applying the cyclic property of the trace, we can rotate the trailing $U^{-1}$ to the front of the product, where it annihilates with the leading $U$:
\begin{equation*}
    \text{tr}\left( U \mu \Pi_\nu \Phi U^{-1} \right) = \text{tr}\left( U^{-1} U \mu \Pi_\nu \Phi \right) = \text{tr}\left( \mu \Pi_\nu \Phi \right).
\end{equation*}
Because the original distributions were strictly orthogonal ($\inner{\mu}{\Pi_\nu \Phi} = 0$), the trace evaluation remains identically zero. Thus, algebraic matrix projection provides a canonically intrinsic resolution that is fundamentally independent of the internal physical reference frame.
\end{proof}

\begin{remark}[Quantization Independence]
\label{rem:quant_independence}
A foundational requirement for any robust microlocal framework is that physical observables must not depend on the choice of pseudo-differential calculus. Different quantization schemes (e.g., standard Kohn-Nirenberg versus Weyl quantization) alter the sub-principal and lower-order symbols of the resulting operators. However, because our algebraic projectors $\Pi_\mu(x,\xi)$ and $\Pi_\nu(x,\xi)$ are constructed strictly from the principal symbol of the governing PDE system—which is globally invariant under changes of quantization—the trace duality pairing $\inner{\mu}{\Pi_\nu \Phi}$ is canonically invariant. The algebraic orthogonality of higher-order distributions is an intrinsic property of the wave system, entirely independent of the underlying pseudo-differential quantization scheme.
\end{remark}

\begin{theorem}[Localization]
\label{thm:loc_matrix_proj}
Let $U \subset \Omega$ be an open sub-domain. If $\mu \perp \nu$ on $\mathcal{M}$, then their restrictions to the local characteristic variety are orthogonal: $\mu|_{\mathcal{M}_U} \perp \nu|_{\mathcal{M}_U}$.
\end{theorem}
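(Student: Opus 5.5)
The plan is to argue exactly as in the scalar Localization result (Theorem~\ref{thm:localization_radon}) and transport it through Definition~\ref{def:matrix_projectors}. Write $\mathcal{M}_U := \mathcal{M} \cap (U \times S^{d-1})$. Since $U$ is open, $\mathcal{M}_U$ is an open subset of $\mathcal{M}$. The crossing set of the restricted symbol $p|_{\mathcal{M}_U}$ is $\mathcal{C}_U = \mathcal{C} \cap \mathcal{M}_U$, because the number $r(x,\xi)$ of distinct eigenvalues, and its local constancy, is a pointwise and local notion (Definition~\ref{def:crossing_set}). Consequently, the projector symbols of the restricted problem are just the restrictions $\Pi_\mu|_{\mathcal{M}_U}$ and $\Pi_\nu|_{\mathcal{M}_U}$. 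These remain smooth on $\mathcal{M}_U \setminus \mathcal{C}_U$, and the identity $\Pi_\mu\Pi_\nu = 0$ holds pointwise, so it survives restriction. The microlocal supports of the restrictions are contained in the original supports intersected with $\mathcal{M}_U$, so they stay disjoint from $\mathcal{C}_U$. The restricted pair is therefore admissible for the definition.

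The core step is extension by zero. Take a scalar test function $\Phi \in C_c^\infty(\mathcal{M}_U \setminus \mathcal{C}_U)$ and let $\tilde\Phi$ be its extension by zero to $\mathcal{M} \setminus \mathcal{C}$. Its support is a compact subset of the open set $\mathcal{M}_U \setminus \mathcal{C}$, so $\tilde\Phi \in C_c^\infty(\mathcal{M}\setminus\mathcal{C})$, with all $\xi$-derivatives also extended by zero. By the definition of restriction of a distribution, $\inner{\mu|_{\mathcal{M}_U}}{\Pi_\nu \Phi} = \inner{\mu}{\Pi_\nu\tilde\Phi}$. This holds because $\Pi_\nu\tilde\Phi$ is the zero extension of $\Pi_\nu\Phi$: the product is pointwise, and no symbol-calculus corrections arise, as stressed after Definition~\ref{def:matrix_projectors}. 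The right-hand side vanishes by the hypothesis $\mu\perp\nu$. The same argument applies to $\inner{\nu|_{\mathcal{M}_U}}{\Pi_\mu\Phi}$.

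One technical point needs checking: that restriction is compatible with the trace pairing for the operator-valued symbol $\Pi_\nu\Phi$, which is not an elementary tensor. I would first verify the identity on elementary tensors $B\otimes\phi$, where it is immediate from $\inner{\mu}{\phi}\in\mathcal{L}^1(H)$. I would then pass to general symbols by the $C_b^{0,\kappa}$-continuity used to define the pairing. This requires approximating $\Pi_\nu\tilde\Phi$ by finite sums of elementary tensors supported in a fixed compact subset of $\mathcal{M}_U\setminus\mathcal{C}$. In finite dimensions this is trivial entrywise, by writing $\Pi_\nu\tilde\Phi = \sum_{k,l} (\Pi_\nu)_{kl}\tilde\Phi\, E_{kl}$ after multiplying by a cutoff equal to $1$ on $\operatorname{supp}\Phi$.

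I expect the main obstacle to be the identification $\mathcal{C}_U = \mathcal{C}\cap\mathcal{M}_U$ together with the unchanged projector symbols. In the infinite-dimensional setting of Remark~\ref{rem:finite_infinite}, this additionally uses that the uniform spectral gap on compact subsets persists on the smaller set. That persistence is automatic, since any compact subset of $\mathcal{M}_U\setminus\mathcal{C}_U$ is a compact subset of $\mathcal{M}\setminus\mathcal{C}$.
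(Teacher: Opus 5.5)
Your proposal is correct and follows the same route as the paper. Like the paper, you extend the localized test function $\Phi$ by zero, note that the pointwise product $\Pi_\nu\Phi$ is then the zero extension of its local counterpart, and invoke the global pairing conditions of Definition~\ref{def:matrix_projectors}. Your additional checks are sound and simply make explicit what the paper leaves implicit: that the restricted crossing set is $\mathcal{C}\cap\mathcal{M}_U$ with projectors given by restriction, and that restriction commutes with the trace pairing via elementary tensors.
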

\begin{proof}
Let $\Phi \in C_c^\infty(\mathcal{M}|_U \setminus \mathcal{C})$ be a localized scalar phase-space test function. By extending $\Phi$ by zero, it becomes a valid test function globally. Because $\Pi_\nu$ is a pointwise matrix multiplier, the product $\Pi_\nu \Phi$ remains supported within the sub-domain, bounding the local evaluation to zero.
\end{proof}

\begin{theorem}[Mode Consistency]
\label{thm:mode_consist_matrix_proj}
If the generating sequences of $\mu$ and $\nu$ are supported on strictly distinct characteristic varieties (distinct wave modes) in $\mathcal{M} \setminus \mathcal{C}$, then $\mu \perp \nu$.
\end{theorem}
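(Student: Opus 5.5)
The plan is to verify the two pairing conditions of Definition~\ref{def:matrix_projectors} directly. The only inputs are the left polarization identity of Lemma~\ref{lem:intrinsic_polarization}, its operational form Lemma~\ref{lem:right_insertion}, and the orthogonality of distinct Riesz projectors of the Hermitian symbol $p(x,\xi)$.

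\textbf{Step 1: the projectors are orthogonal.} Since the generating sequences concentrate on distinct characteristic sheets $\Sigma_\mu$ and $\Sigma_\nu$ away from $\mathcal{C}$, the associated modes correspond to distinct eigenvalues $\lambda_\mu \neq \lambda_\nu$ of $p(x,\xi)$ on $\mathcal{M} \setminus \mathcal{C}$. By the Spectral Theorem, the Riesz projectors of Definition~\ref{def:crossing_set} attached to distinct eigenvalues of a Hermitian matrix are mutually orthogonal self-adjoint projections. Hence
\begin{equation*}
\Pi_\mu \Pi_\nu = \Pi_\nu \Pi_\mu = 0
\end{equation*}
pointwise on $\mathcal{M} \setminus \mathcal{C}$. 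These products are smooth there, so every symbol below is an admissible test symbol.

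\textbf{Step 2: the algebraic collapse.} Fix a scalar $\Phi \in C_c^\infty(\mathcal{M} \setminus \mathcal{C})$. Hypotheses (i)--(iii) of Lemma~\ref{lem:intrinsic_polarization} hold for $\mu$, so $\Pi_\mu \mu = \mu$. Applying Lemma~\ref{lem:right_insertion} with $\Psi = \Pi_\nu \Phi$ gives
\begin{equation*}
\inner{\mu}{\Pi_\nu \Phi} = \inner{\mu}{\Pi_\nu \Phi\, \Pi_\mu} = \inner{\mu}{\Phi\, \Pi_\nu \Pi_\mu} = 0.
\end{equation*}
The middle equality uses that $\Phi$ is scalar, so it commutes with the matrix symbols pointwise; no Moyal corrections arise, as stressed after Definition~\ref{def:matrix_projectors}. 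The symmetric computation, using $\Pi_\nu \nu = \nu$ and $\Pi_\mu \Pi_\nu = 0$, gives $\inner{\nu}{\Pi_\mu \Phi} = 0$. Both conditions of the definition therefore hold, and $\mu \perp \nu$.

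\textbf{Main obstacle.} The delicate point is Step 1, namely justifying that "distinct wave modes" means distinct eigenvalue branches, so that $\Pi_\mu$ and $\Pi_\nu$ are different spectral projectors rather than the same one. The polarization lemma identifies $\Pi_\mu$ with the projector onto $\ker p$, which is the same eigenvalue $0$ for every mode. I would resolve this by indexing modes through the eigenvalue branch $\lambda_\mu$ whose zero set $\Sigma_\mu$ carries the microlocal support, and taking $\Pi_\mu$ to be the spectral projector of that branch.

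On $\Sigma_\mu \setminus \Sigma_\nu$ this projector coincides with the kernel projector, so the polarization identity applies there. Where the sheets would meet ($\lambda_\mu = \lambda_\nu = 0$), the eigenvalues cross and the point lies in $\mathcal{C}$, which is excluded by hypothesis. Off $\mathcal{C}$ the branches remain distinct, and their Riesz projectors stay orthogonal and smooth throughout $\mathcal{M} \setminus \mathcal{C}$. A continuity argument on the contour integrals then makes this precise.
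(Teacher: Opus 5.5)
Your proposal is correct and follows the paper's own argument. That argument runs: the polarization identity $\Pi_\mu\mu=\mu$ from Lemma~\ref{lem:intrinsic_polarization}, right insertion via Lemma~\ref{lem:right_insertion} with $\Psi=\Pi_\nu\Phi$, commuting the scalar $\Phi$ past the matrices, and the collapse via $\Pi_\nu\Pi_\mu=0$; you take this last identity directly from the spectral theorem and also write out the symmetric pairing, which the paper leaves implicit. Your added observation is also correct and fills in a point the paper's proof glosses over: $\Pi_\mu,\Pi_\nu$ must be read as the Riesz projectors of distinct eigenvalue branches, not literally as ``the projector onto $\ker p$'', since the latter is a single symbol on $\mathcal{M}$ and would make $\Pi_\mu\Pi_\nu=0$ impossible.
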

\begin{proof}
This theorem establishes that algebraic projection natively separates distinct physical modes occupying the same phase-space coordinates. A smooth matrix projector derived from a PDE principal symbol maintains its rank and does not spatially vanish. Instead, the separation is enforced strictly by the intrinsic polarization of the distributions.

By Lemma~\ref{lem:intrinsic_polarization}, a distribution $\mu$ generated by a specific characteristic wave mode is intrinsically polarized along its characteristic eigenspace---its trace-class values lie in the kernel of the principal symbol---so that it satisfies the algebraic identity $\mu = \Pi_\mu \mu$.

Substituting this intrinsic polarization into the trace duality pairing, and inserting $\Pi_\mu$ to the right of the test symbol via Lemma~\ref{lem:right_insertion}, yields:
\begin{equation*}
    \inner{\mu}{\Pi_\nu \Phi} = \inner{\mu}{\Pi_\nu \Phi \Pi_\mu}.
\end{equation*}

Because $\Phi$ is a scalar function, it commutes with the matrices, allowing us to rearrange the test symbol to $\Pi_\nu \Pi_\mu \Phi$. Furthermore, because $\mu$ and $\nu$ occupy distinct characteristic varieties, their spectral projectors correspond to orthogonal eigenspaces of the principal symbol. This global orthogonality ensures that $\Pi_\mu \Pi_\nu = 0$ everywhere on the manifold.

Because orthogonal projectors are self-adjoint, taking the adjoint of this relation yields $\Pi_\nu \Pi_\mu = 0$. The evaluation therefore collapses algebraically:
\begin{equation*}
    \inner{\mu}{\Pi_\nu \Pi_\mu \Phi} = \inner{\mu}{0} = 0.
\end{equation*}
The algebraic projection natively guarantees zero interaction for distinct microlocal modes entirely without spatial boundary discontinuities.
\end{proof}

\begin{example}[Two modes of a $2\times 2$ symmetric hyperbolic system]
\label{ex:two_mode}
On $\Omega = \R_t \times \R_x$ consider the constant-coefficient system
\begin{equation*}
    P(D)u = \partial_t u - A\,\partial_x u = 0, \qquad
    A = \begin{pmatrix} 0 & 1 \\ 1 & 0 \end{pmatrix},
\end{equation*}
for $u = (u^1,u^2)^\top$. In the cotangent variables $(\tau,\xi)$ dual to $(t,x)$ its Hermitian principal symbol is $p(\tau,\xi) = 2\pi i(\tau I - \xi A)$, with characteristic set the two light lines $\Sigma_\pm = \{\tau = \pm\xi\}$ and constant eigenprojectors
\begin{equation*}
    \Pi_\pm = r_\pm r_\pm^{*} = \tfrac12\begin{pmatrix} 1 & \pm 1 \\ \pm 1 & 1\end{pmatrix},
    \qquad r_\pm = \tfrac{1}{\sqrt2}\begin{pmatrix} 1 \\ \pm 1\end{pmatrix},\quad A r_\pm = \pm r_\pm,
\end{equation*}
so $\Pi_+\Pi_- = 0$ and $\Pi_+ + \Pi_- = I$; the eigenvalues $\pm 1$ never coincide, whence $\mathcal{C} = \emptyset$ on the cosphere $S^1$.

Fix $\varphi \in C_c^\infty(\Omega)$ and, for $n \in \mathbb{N}$, set
\begin{equation*}
    u_n^+ = r_+\,\varphi\, e^{2\pi i n(t+x)}, \qquad
    u_n^- = r_-\,\varphi\, e^{2\pi i n(x - t)}.
\end{equation*}
Since $A r_\pm = \pm r_\pm$, a direct computation gives the residuals
\begin{equation*}
    P u_n^+ = r_+(\partial_t\varphi - \partial_x\varphi)\,e^{2\pi i n(t+x)},
    \qquad
    P u_n^- = r_-(\partial_t\varphi + \partial_x\varphi)\,e^{2\pi i n(x-t)},
\end{equation*}
which are bounded in $L^2$ and weakly null, hence $P u_n^\pm \to 0$ strongly in $H^{-1}_{loc} = \mathrm{W}^{-1,2}_{loc}$ by Rellich. Both sequences are bounded and weakly null in $L^2_{loc}(\Omega;\mathbb{C}^2)$ (here $r_\pm$ are real, so $r_\pm^{*} = r_\pm^\top$); at $p=2$ (companion $v_n = u_n$) they generate, separately, the operator H-measures $\mu$ (from $u_n^+$) and $\nu$ (from $u_n^-$) in $\D_0'(\Omega\times S^1, \mathcal{L}(\mathbb{C}^2))$.

The packet $u_n^+$ oscillates in the cotangent direction $\hat\zeta_+ = (1,1)/\sqrt2 \in \Sigma_+$ and $u_n^-$ in $\hat\zeta_- = (-1,1)/\sqrt2 \in \Sigma_-$, so the hypotheses of Lemma~\ref{lem:intrinsic_polarization} hold and yield the intrinsic polarizations $\Pi_+\mu = \mu$ and $\Pi_-\nu = \nu$, with $\mu$ supported on $\Sigma_+$ and $\nu$ on $\Sigma_-$. Concretely $\mu = \Pi_+\,\abs{\varphi}^2\,dt\,dx \otimes \delta_{\hat\zeta_+}$ and $\nu = \Pi_-\,\abs{\varphi}^2\,dt\,dx\otimes\delta_{\hat\zeta_-}$.

The two packets share the identical spatial support $\operatorname{supp}\varphi$: they overlap completely in $(t,x)$, so no scalar spatial multiplier $a(t,x)$ can separate them, since any $a\equiv 1$ on $\operatorname{supp}\varphi$ acts as the identity on both. Yet by Theorem~\ref{thm:mode_consist_matrix_proj} they are orthogonal: for every $\Phi \in C_c^\infty(\Omega\times S^1)$,
\begin{equation*}
    \inner{\mu}{\Pi_-\Phi} = \inner{\mu}{\Pi_-\Phi\,\Pi_+} = \inner{\mu}{\Pi_-\Pi_+\,\Phi} = \inner{\mu}{0} = 0,
\end{equation*}
using $\mu = \Pi_+\mu$ and inserting $\Pi_+$ on the right via Lemma~\ref{lem:right_insertion}; symmetrically $\inner{\nu}{\Pi_+\Phi} = 0$. Hence $\mu \perp \nu$: the modes are separated by their intrinsic polarization precisely where spatial multipliers are blind.

For general $1 < p < \infty$ the conclusion is unchanged: replace $\varphi$ by an $L^p$-normalised profile and $v_n$ by the canonical companion $v_n = \abs{u_n}^{p-2}u_n$ of Proposition~\ref{prop:localization}; the residual and polarization computations are identical, and $\mu,\nu \in \D_{0,d(\kappa_0+2)}'(\Omega\times S^1,\mathcal{L}(\mathbb{C}^2))$ (with $\kappa_0 = \lfloor d/2\rfloor+1$ the H\"ormander--Mihlin index of Proposition~\ref{prop:localization}, here $d=2$) are then genuine higher-order H-distributions to which the same algebraic separation applies.
\end{example}

\begin{remark}[The Complementary Role of Spatial Multipliers]
\label{rem:complementary_spatial}
It is critical to recognize that the matrix framework exclusively isolates distinct polarizations ($\Pi_\mu \Pi_\nu = 0$). If two distributions are generated by the \textit{same} wave mode, they share the same characteristic variety and spectral projector ($\Pi_\mu = \Pi_\nu$), meaning $\Pi_\mu \Pi_\nu \neq 0$. 

To geometrically separate two disjoint wave packets of the same mode, one must revert to the scalar spatial multipliers of Section~\ref{sec:radon_multipliers}. This complementary duality is mathematically sound: because the distributions are of anisotropic order $(0,\kappa)$, they evaluate zero spatial derivatives. Therefore, as long as the identical modes are disjoint in physical space, a smooth spatial multiplier $a(x)$ separates them without triggering any frequency derivative divergences. Thus, algebraic matrices separate distinct modes in phase space, while spatial multipliers separate disjoint coordinates in physical space.
\end{remark}

\begin{theorem}[Tensor Product Invariance]
\label{thm:tensor_matrix_proj}
Let $\mu \perp \nu$ on $\mathcal{M}_1$, and let $\omega_1, \omega_2$ be arbitrary defect distributions on $\mathcal{M}_2$. The tensor products remain orthogonal on the product space: $\mu \otimes \omega_1 \perp \nu \otimes \omega_2$.
\end{theorem}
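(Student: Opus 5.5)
The plan is to mirror the proof of Mode Consistency (Theorem~\ref{thm:mode_consist_matrix_proj}), lifted to the product manifold $\mathcal{M}_1 \times \mathcal{M}_2$ with internal space $H_1 \otimes H_2$. The first step is to fix the product structure. The tensor products $\mu \otimes \omega_1$ and $\nu \otimes \omega_2$ are operator-valued distributions on $\mathcal{M}_1 \times \mathcal{M}_2$ with values in $\mathcal{L}^1(H_1) \widehat{\otimes} \mathcal{L}^1(H_2) \subseteq \mathcal{L}^1(H_1 \otimes H_2)$. Their anisotropic order is the sum of the factor orders, and they are defined on elementary test functions by
\begin{equation*}
\inner{\mu\otimes\omega_1}{\Phi_1\otimes\Phi_2} = \inner{\mu}{\Phi_1} \otimes \inner{\omega_1}{\Phi_2},
\end{equation*}
then extended by density of $C_c^\infty(\mathcal{M}_1\setminus\mathcal{C}_1)\otimes C_c^\infty(\mathcal{M}_2\setminus\mathcal{C}_2)$ in $C_c^\infty$ of the product. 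As the separating projectors on the product I take $\Pi_\mu \otimes I$ and $\Pi_\nu \otimes I$. These are smooth off $\mathcal{C}_1 \times \mathcal{M}_2$, and they remain orthogonal, since $(\Pi_\mu\otimes I)(\Pi_\nu\otimes I) = \Pi_\mu\Pi_\nu \otimes I = 0$.

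Next I transfer the polarization. From $\Pi_\mu \mu = \mu$ (Lemma~\ref{lem:intrinsic_polarization}) we get $(\Pi_\mu\otimes I)(\mu\otimes\omega_1) = \mu\otimes\omega_1$ on elementary tensors, by pointwise multiplication of the first factor. This extends by continuity. The same argument gives $(\Pi_\nu \otimes I)(\nu\otimes\omega_2) = \nu\otimes\omega_2$. I then apply the argument of Lemma~\ref{lem:right_insertion} verbatim to the product pairing. Because $\operatorname{tr}_{H_1\otimes H_2}$ is the product of the partial traces, for a scalar test function $\Phi = \Phi_1\otimes\Phi_2$ we obtain
\begin{equation*}
\inner{\mu\otimes\omega_1}{(\Pi_\nu\otimes I)\Phi} = \operatorname{tr}\big(\Pi_\nu \inner{\mu}{\Phi_1}\big)\,\operatorname{tr}\inner{\omega_1}{\Phi_2} = \inner{\mu}{\Pi_\nu\Phi_1}\,\operatorname{tr}\inner{\omega_1}{\Phi_2} = 0,
\end{equation*}
and the last factor vanishes by $\mu\perp\nu$ in the sense of Definition~\ref{def:matrix_projectors}. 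Equivalently, inserting $\Pi_\mu \otimes I$ on the right makes the symbol collapse to $\Pi_\nu\Pi_\mu\otimes I = 0$. The symmetric pairing $\inner{\nu\otimes\omega_2}{(\Pi_\mu\otimes I)\Phi}$ is handled the same way.

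The final step is to extend from elementary $\Phi$ to a general $\Phi \in C_c^\infty((\mathcal{M}_1\setminus\mathcal{C}_1)\times(\mathcal{M}_2\setminus\mathcal{C}_2))$. Sums of elementary tensors are dense in the $C_b^{0,\kappa_1+\kappa_2}$ topology on compacts. Multiplication by the smooth symbol $\Pi_\nu\otimes I$ is continuous there, and so is the pairing, so the vanishing passes to the limit.

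I expect the main obstacle to be this density/continuity step, together with the admissibility of the test class. Higher-order distributions require the approximating tensor sums to converge with all $\kappa_1+\kappa_2$ frequency derivatives, uniformly on a fixed compact set inside the product. I would handle this with a Stone--Weierstrass/Schwartz-kernel argument: localize by a product partition of unity to coordinate charts, then use polynomial (or Fourier series) approximation in $C^k$ on cubes. A secondary subtlety is that the "characteristic variety" of the product is not specified in the statement. I will therefore \emph{define} the product projectors as $\Pi\otimes I$, which makes $\omega_1,\omega_2$ arbitrary, with no polarization hypothesis needed on them. I will also note that the excised set $\mathcal{C}_1\times\mathcal{M}_2$ is where the first-factor supports already stay away from.
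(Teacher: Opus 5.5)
Your proposal is correct and follows essentially the paper's route: lift the projectors to $\Pi_\mu\otimes I$ and $\Pi_\nu\otimes I$, use the mixed-product property, and reduce the product pairing to the hypothesis $\inner{\mu}{\Pi_\nu\,\cdot\,}=0$ on $\mathcal{M}_1$. The only difference is technical. The paper treats a general $\Phi$ at once through the iterated (Fubini/Schwartz kernel) pairing $\inner{\mu}{\Pi_\nu\inner{\omega_1}{\Phi}_2}_1$, whereas you factor the pairing on elementary tensors and then extend by density, which amounts to the same thing; you trace out $H_2$ explicitly, and your polarization-transfer step is not actually needed.
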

\begin{proof}
The tensor distributions $\mu \otimes \omega_1$ and $\nu \otimes \omega_2$ are valued in $\mathcal{L}^1(H_1 \otimes H_2)$, where the first factor $H_1$ carries the polarizations of $\mu, \nu$ on $\mathcal{M}_1$ and the second factor $H_2$ those of $\omega_1, \omega_2$ on $\mathcal{M}_2$. The matrix projector symbols naturally lift to the product manifold via the tensor product with the identity operator $I = I_{H_2}$ on the second factor: $\tilde{\Pi}_\mu = \Pi_\mu \otimes I$ and $\tilde{\Pi}_\nu = \Pi_\nu \otimes I$, acting on $H_1 \otimes H_2$.

First, we verify the algebraic annihilation of the composite projectors. By the mixed-product property of the Kronecker tensor product, we have:
\begin{equation*}
    \tilde{\Pi}_\mu \tilde{\Pi}_\nu = (\Pi_\mu \otimes I)(\Pi_\nu \otimes I) = (\Pi_\mu \Pi_\nu) \otimes I = 0 \otimes I = 0.
\end{equation*}

Second, evaluating the forward trace algebraic duality pairing for the tensor products on a joint scalar test function $\Phi$ yields:
\begin{equation*}
    \inner{\mu \otimes \omega_1}{ \tilde{\Pi}_\nu \Phi } = \inner{\mu}{ \Pi_\nu \inner{\omega_1}{\Phi}_2 }_1.
\end{equation*}
Because the inner evaluation $\inner{\omega_1}{\Phi}_2$ produces a valid smooth scalar test function on the base space $\mathcal{M}_1$, the exact trace evaluation pairs to strictly zero. The partial evaluation $\inner{\omega_1}{\Phi}_2$ and the iterated identity above are precisely the Schwartz kernel (Fubini) theorem for distributions: since $\Phi$ is smooth and compactly supported and $C_c^\infty$ is nuclear, $\mu \otimes \omega_1$ acts as the iterated pairing \cite{hormander1983}, with the anisotropic refinement for finite-order test symbols established in \cite{antonic2021}. The symmetric reverse pairing evaluates to zero identically, satisfying the geometric requirements.
\end{proof}

\begin{theorem}[Linear Additivity of Matrix Projections]
\label{thm:additivity_matrix_proj}
Let $\mu, \nu_1, \nu_2 \in \D_{0,\kappa}'(\mathcal{M}, \mathcal{L}^1(H))$. If $\mu \perp \nu_1$ and $\mu \perp \nu_2$ under algebraic matrix projection, then for any complex scalars $\alpha, \beta \in \mathbb{C}$, $\mu \perp (\alpha\nu_1 + \beta\nu_2)$.
\end{theorem}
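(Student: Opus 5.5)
The plan is to check the two pairing conditions of Definition~\ref{def:matrix_projectors} one at a time for the pair $(\mu, \nu)$, where $\nu := \alpha\nu_1 + \beta\nu_2$. Both conditions are linear: one in the distribution argument, the other in the test-symbol argument. The only real work is to say which spectral projector $\Pi_\nu$ is attached to the superposition $\nu$.

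First, the condition in which $\nu$ is the distribution. Since $\Pi_\mu$ is fixed and the trace pairing of the preceding discussion is linear in the distribution, for every scalar $\Phi \in C_c^\infty(\mathcal{M}\setminus\mathcal{C})$ I expect
\begin{equation*}
\inner{\alpha\nu_1+\beta\nu_2}{\Pi_\mu\Phi} = \alpha\inner{\nu_1}{\Pi_\mu\Phi} + \beta\inner{\nu_2}{\Pi_\mu\Phi} = 0,
\end{equation*}
using the hypotheses $\mu\perp\nu_1$ and $\mu\perp\nu_2$. This needs no positivity, in contrast to the order-zero setting of Theorem~\ref{thm:inheritance}, so complex $\alpha,\beta$ are allowed.

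Second, the projector of the superposition. Here I would use a dichotomy for the spectral projectors of Definition~\ref{def:crossing_set}: off $\mathcal{C}$, two Riesz projectors of the Hermitian symbol $p$ are either identical or mutually orthogonal.
\begin{itemize}
\item If $\Pi_{\nu_1} = \Pi_{\nu_2}$, set $\Pi_\nu := \Pi_{\nu_1}$.
\item Otherwise set $\Pi_\nu := \Pi_{\nu_1} + \Pi_{\nu_2}$. This is again a smooth orthogonal projector on $\mathcal{M}\setminus\mathcal{C}$, namely the spectral projector onto the union of the two eigenspaces.
\end{itemize}
In both cases $\Pi_\mu\Pi_\nu = 0$, because $\Pi_\mu\Pi_{\nu_j} = 0$ for $j=1,2$. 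I would also record that $\Pi_\nu\nu = \nu$. This follows from Lemma~\ref{lem:intrinsic_polarization} applied to each $\nu_j$, together with $\Pi_\nu\Pi_{\nu_j} = \Pi_{\nu_j}$, so $\Pi_\nu$ is a legitimate polarization projector for the superposition. The degenerate cases $\alpha=0$ or $\beta=0$ reduce to the hypotheses.

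Third, the condition in which $\mu$ is the distribution. By linearity of $\mu$ on operator-valued test symbols,
\begin{equation*}
\inner{\mu}{\Pi_\nu\Phi} = \inner{\mu}{\Pi_{\nu_1}\Phi} + \inner{\mu}{\Pi_{\nu_2}\Phi} = 0
\end{equation*}
in the second case, and this is immediate in the first. One could instead argue directly via Lemma~\ref{lem:right_insertion}: $\inner{\mu}{\Pi_\nu\Phi} = \inner{\mu}{\Pi_\nu\Pi_\mu\Phi} = 0$.

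The main obstacle is the second step: making the choice of $\Pi_\nu$ canonical. The definition attaches a projector to a mode rather than to an arbitrary distribution, so I must justify that a superposition of two modes is governed by the sum of the projectors. I would also need to confirm that this sum stays smooth and orthogonal away from $\mathcal{C}$, which relies on the constant-multiplicity structure of Definition~\ref{def:crossing_set}.
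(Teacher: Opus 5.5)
Your argument is correct and shares the paper's skeleton. The forward bracket is handled by linearity in the distribution. The composite projector is the projector onto the joint span of the two ranges, collapsing to $\Pi_{\nu_1}$ on a shared sheet. And $\Pi_\mu\Pi_\nu=0$ is deduced from $\Pi_\mu\Pi_{\nu_j}=0$.

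The genuine difference is in the reverse pairing. The paper deliberately avoids expanding $\inner{\mu}{\Pi_\nu\Phi}$ linearly in the projector. It instead uses $\Pi_\mu\mu=\mu$ and Lemma~\ref{lem:right_insertion} to rewrite the pairing as $\inner{\mu}{\Pi_\nu\Pi_\mu\Phi}=0$, which is only your fallback.

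Your primary route is legitimate exactly because of your dichotomy. For single-eigenvalue Riesz projectors the span projector is literally $\Pi_{\nu_1}+\Pi_{\nu_2}$, so linearity in the test symbol closes the argument from the hypothesised pairings alone. It never invokes the polarization of $\mu$, which is an advantage: hypotheses (i)--(iii) of Lemma~\ref{lem:intrinsic_polarization} are not part of the theorem's statement.

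The paper's route buys generality instead. It allows $\Pi_{\nu_1}=\sum_{j\in J_1}\Pi_j$ and $\Pi_{\nu_2}=\sum_{j\in J_2}\Pi_j$ to be arbitrary sums of eigenprojectors, as needed to iterate the theorem on superpositions. When $J_1$ and $J_2$ overlap only partially, the pooled projector is $\Pi_{\nu_1}+\Pi_{\nu_2}-\sum_{j\in J_1\cap J_2}\Pi_j$, and your dichotomy no longer applies. Linear expansion then leaves the term $\inner{\mu}{\sum_{j\in J_1\cap J_2}\Pi_j\Phi}$, which the hypotheses do not control; only the polarization of $\mu$ kills it.

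Two small points remain. First, your global case split should be made on each connected component of $\mathcal{M}\setminus\mathcal{C}$, since the set where two elementary projectors coincide is open and closed there. More simply, define $\Pi_\nu$ directly as the projector onto $\operatorname{Range}\Pi_{\nu_1}+\operatorname{Range}\Pi_{\nu_2}$. Second, your check $\Pi_\nu\nu=\nu$ is an addition the paper omits and is useful for reapplying the framework to the superposition. It does, however, rely on Lemma~\ref{lem:intrinsic_polarization} for $\nu_1$ and $\nu_2$.
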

\begin{proof}
Definition~\ref{def:matrix_projectors} requires us to verify both symmetric duality pairings and the orthogonality of the composite projectors. 

First, by the linearity of the trace operation and the distributional duality pairing, the forward bracket evaluates trivially: $\inner{\alpha\nu_1 + \beta\nu_2}{\Pi_\mu \Phi} = \alpha\inner{\nu_1}{\Pi_\mu \Phi} + \beta\inner{\nu_2}{\Pi_\mu \Phi} = 0$.

Second, we must construct the composite projector $\Pi_{\alpha\nu_1 + \beta\nu_2}$ and verify it is a legitimate smooth symbol. Because $\Pi_{\nu_1}$ and $\Pi_{\nu_2}$ are spectral projectors of the \emph{same} Hermitian principal symbol $p$, each is a sum of the elementary smooth eigenprojectors $\{\Pi_j\}_j$ of $p$ furnished by Lemma~\ref{lem:intrinsic_polarization} on $\mathcal{M} \setminus \mathcal{C}$. Their joint span is therefore $\Pi_{\alpha\nu_1 + \beta\nu_2} = \sum_{j \in J} \Pi_j$, where $J$ pools the two mode families; this is again an orthogonal projector of locally constant rank, smooth on $\mathcal{M} \setminus \mathcal{C}$. (If $\nu_1$ and $\nu_2$ share a sheet, then $\Pi_{\nu_1} = \Pi_{\nu_2}$ and the span reduces to that single projector.)

Because $\mu \perp \nu_1$ and $\mu \perp \nu_2$, we have $\Pi_\mu \Pi_{\nu_1} = 0$ and $\Pi_\mu \Pi_{\nu_2} = 0$. This guarantees the range of $\Pi_\mu$ is strictly orthogonal to both individual ranges, and therefore orthogonal to their combined span. This satisfies the geometric requirement: $\Pi_\mu \Pi_{\alpha\nu_1 + \beta\nu_2} = 0$.

Finally, we must evaluate the reverse pairing without falsely assuming the composite projector decomposes linearly. We achieve this strictly via the intrinsic polarization of $\mu$. By Lemma~\ref{lem:intrinsic_polarization}, the values of $\mu$ satisfy $\Pi_\mu \inner{\mu}{\phi} = \inner{\mu}{\phi}$, so inserting $\Pi_\mu$ to the right of the test symbol via Lemma~\ref{lem:right_insertion} yields:
\begin{equation*}
    \inner{\mu}{\Pi_{\alpha\nu_1 + \beta\nu_2} \Phi} = \inner{\mu}{\Pi_{\alpha\nu_1 + \beta\nu_2} \Phi \Pi_\mu}.
\end{equation*}
Because $\Phi$ is scalar, it commutes with the matrices. Furthermore, because orthogonal projectors are self-adjoint, the global orthogonality $\Pi_\mu \Pi_{\alpha\nu_1 + \beta\nu_2} = 0$ rigorously implies $\Pi_{\alpha\nu_1 + \beta\nu_2} \Pi_\mu = 0$. The pairing collapses identically:
\begin{equation*}
    \inner{\mu}{\Pi_{\alpha\nu_1 + \beta\nu_2} \Pi_\mu \Phi} = \inner{\mu}{0} = 0.
\end{equation*}
Thus, the linear combination is strictly and symmetrically orthogonal to $\mu$.
\end{proof}

\begin{remark}[The Pseudo-Differential Lift and Commutator Decay]
\label{rem:pseudo_diff_lift}
While Canonical Algebraic Separation is defined purely via the principal symbol $\Pi_\nu(x,\xi)$ acting on the characteristic manifold, it rigorously lifts to the asymptotic separation of the generating sequences. If the distributions are generated by highly oscillatory sequences $u_n \rightharpoonup 0$, isolating a wave mode physically requires the application of a zero-order pseudo-differential operator, $\text{Op}(\Pi_\nu)$, to the sequence. A standard concern in microlocal analysis is whether the commutator between this projector and the governing PDE operator $P(x,D)$ of order $m$ generates destructive lower-order interference. Crucially, $\Pi_\nu$ is not merely an arbitrary zero-order symbol: being a spectral projector of the principal symbol $p$, it commutes with it pointwise, $\Pi_\nu p = p \Pi_\nu$. Consequently the order-$m$ terms of $\text{Op}(\Pi_\nu) P$ and $P\, \text{Op}(\Pi_\nu)$ cancel, and the pseudo-differential calculus guarantees that the commutator $[\text{Op}(\Pi_\nu), P(x,D)]$ drops to order $m-1$, with principal symbol the Poisson bracket $\tfrac{1}{i}\{\Pi_\nu, p\}$. The decisive mechanism separating distinct modes is then the smoothing of the cross term: since $\Pi_\nu \Pi_\mu = 0$, the composition $\text{Op}(\Pi_\nu)\text{Op}(\Pi_\mu) = \text{Op}(\Pi_\nu \Pi_\mu) + R = R$ has order $-1$, hence is compact and contributes nothing to the high-frequency defect limit. Thus, the exact algebraic annihilation at the symbol level ($\Pi_\mu \Pi_\nu = 0$) rigorously lifts to asymptotic annihilation at the operator level, the lower-order commutator producing at most intra-eigenspace transport rather than cross-mode interference.
\end{remark}

\begin{theorem}[Conservation under Multi-Phase Hamiltonian Flow]
\label{thm:flow_matrix_proj}
In a vector-valued PDE system, transport is governed by the distinct Hamiltonians of the principal symbol's eigenvalues. Let $\mu_t = (\Phi_t^\mu)_* \mu_0$ and $\nu_t = (\Phi_t^\nu)_* \nu_0$ be distributions transported by the distinct flows generated by their respective eigenvalue Hamiltonians $H_{\lambda_\mu}$ and $H_{\lambda_\nu}$. If their trajectories remain strictly disjoint from the crossing manifold $\mathcal{C}$, then $\mu_t \perp \nu_t$ for all $t \in \R$.
\end{theorem}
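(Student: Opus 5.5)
The argument will not prove that the flows are diffeomorphisms preserving some geometric separation. Instead it reduces the claim to Theorem~\ref{thm:mode_consist_matrix_proj} applied at each fixed time $t$. The only things to check are that the intrinsic polarizations $\Pi_\mu \mu_t = \mu_t$ and $\Pi_\nu \nu_t = \nu_t$ survive transport, and that the projectors remain smooth and mutually orthogonal along the trajectories. Once both hold, Lemma~\ref{lem:right_insertion} gives the collapse
\begin{equation*}
\inner{\mu_t}{\Pi_\nu\Phi} = \inner{\mu_t}{\Pi_\nu\Phi\,\Pi_\mu} = \inner{\mu_t}{\Pi_\nu\Pi_\mu\,\Phi} = 0,
\end{equation*}
and symmetrically $\inner{\nu_t}{\Pi_\mu\Phi} = 0$. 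This is the same mechanism as in Theorem~\ref{thm:mode_consist_matrix_proj} and Example~\ref{ex:two_mode}.

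The steps, in order, are as follows. First, since the trajectories avoid $\mathcal{C}$ by hypothesis, Definition~\ref{def:crossing_set} makes $\Pi_\mu,\Pi_\nu$ real-analytic on a neighbourhood of the transported microlocal supports. Spectral projectors of distinct eigenvalues of the Hermitian symbol $p$ satisfy $\Pi_\mu\Pi_\nu = 0$ pointwise there. This identity holds at every point of $\mathcal{M}\setminus\mathcal{C}$ and does not depend on $t$.

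Second, I will show that $\Pi_\mu \mu_t = \mu_t$. Two routes are available. (a) If $\mu_t$ is realised as the defect distribution of the propagated sequence $u_n(t)$, then hypotheses (i)--(iii) of Lemma~\ref{lem:intrinsic_polarization} hold at time $t$. Here (ii) holds because $u_n(t)$ still solves the system, and (iii) holds because the Hamiltonian flow of $\lambda_\mu$ preserves the sheet $\Sigma_\mu=\{\lambda_\mu=0\}$, since $H_{\lambda_\mu}\lambda_\mu = \{\lambda_\mu,\lambda_\mu\}=0$. The lemma then applies directly. (b) Purely at the level of distributions, the pushforward satisfies $\inner{\mu_t}{\Psi} = \inner{\mu_0}{\Psi\circ\Phi_t^\mu}$. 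We then need $(\Pi_\mu\circ\Phi_t^\mu)$ to act as the identity on the values of $\mu_0$. This requires the flow to carry the eigenbundle $\mathrm{Range}\,\Pi_\mu$ to itself. I will take (a) as the primary argument and treat the pushforward formula only as the description of where the support travels, which keeps it in $\Sigma_\mu\setminus\mathcal{C}$.

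Third, the pairing is well defined for test functions $\Phi\in C_c^\infty(\mathcal{M}\setminus\mathcal{C})$, because both supports remain compact and strictly away from $\mathcal{C}$. The collapse displayed above then gives both pairing conditions of Definition~\ref{def:matrix_projectors}.

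The main obstacle is the second step, namely that polarization is preserved under transport. A scalar pushforward by $\Phi_t^\mu$ does not automatically transport the internal $\mathcal{L}^1(H)$ factor. The correct transport law for a matrix-valued defect object involves a parallel transport on the eigenbundle, with a connection term coming from the subprincipal symbol and $\{\Pi_\mu,p\}$, as noted in Remark~\ref{rem:pseudo_diff_lift}. That remark shows these lower-order terms produce only intra-eigenspace transport and no cross-mode leakage, since $\mathrm{Op}(\Pi_\nu)\mathrm{Op}(\Pi_\mu)$ is compact. I will invoke it to argue that any transport operator takes values in $\mathrm{Range}\,\Pi_\mu$. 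Failing that, I will interpret $(\Phi_t^\mu)_*$ as including this bundle transport and derive $\Pi_\mu\mu_t=\mu_t$ from the regenerated localisation principle, Proposition~\ref{prop:localization}, at time $t$.
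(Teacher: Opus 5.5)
Your reduction is exactly the paper's argument: pointwise $\Pi_\mu\Pi_\nu=0$ off $\mathcal{C}$, polarization of $\mu_t,\nu_t$ at each fixed $t$, then the collapse through Lemma~\ref{lem:right_insertion}. The paper disposes of your Step~2 with the bare assertion that, away from $\mathcal{C}$, the polarizations ``remain locked'' to their eigenspaces. You are right that this is the crux. However, neither of your routes closes it for the object in the statement, and the computation behind route~(b) shows it cannot be closed from the stated hypotheses. For the scalar pushforward, right insertion on $\mu_0$ gives
\begin{equation*}
    \inner{\mu_t}{\Pi_\nu\Phi} = \inner{\mu_0}{(\Pi_\nu\circ\Phi^\mu_t)\,\Pi_\mu\,(\Phi\circ\Phi^\mu_t)}.
\end{equation*}
This vanishes for all $\Phi$ only under a \emph{two-point} condition such as $\Pi_\nu(\Phi^\mu_t z)\,\Pi_\mu(z)=0$ on $\operatorname{supp}\mu_0$, and the one-point identity $\Pi_\mu\Pi_\nu=0$ says nothing about that. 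The condition fails whenever the eigenbundle rotates along bicharacteristics. In Example~\ref{ex:two_mode}, take $A(x)=R_{\theta(x)}\,\mathrm{diag}(1,-1)\,R_{\theta(x)}^{\top}$ with $R_\theta$ the rotation by $\theta$. The eigenvalues are unchanged, so $\mathcal{C}\cap\mathcal{M}=\emptyset$ and the $\lambda_+$-flow is still the translation $(t,x)\mapsto(t+s,x-s)$ with frozen frequency. Meanwhile $\operatorname{tr}\big(\Pi_-(y)\Pi_+(y')\big)=\sin^2\big(\theta(y)-\theta(y')\big)$. Consider the genuine H-measure $\mu_0=\Pi_+\abs{\varphi}^2\,dt\,dx\otimes\delta_{\hat\zeta_+}$ of $r_+(x)\varphi\,e^{2\pi i n(t+x)}$. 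For it, the displayed pairing equals $\int\sin^2\big(\theta(x)-\theta(x-s)\big)\,(\Phi\circ\Phi^\mu_s)\,\abs{\varphi}^2\,dt\,dx$. This is positive for $\theta(x)=x$, small $s\neq 0$ and suitable $\Phi\ge 0$. So with the literal pushforward the conclusion is false.

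Route~(a) does not rescue this, because it proves orthogonality of a different object. The defect distribution $\tilde\mu_t$ of the propagated sequence is polarized along $\operatorname{Range}\Pi_\mu$ at the \emph{current} point, so in the example above it differs from $(\Phi^\mu_t)_*\mu_0$ precisely in its matrix part. Nor are the hypotheses of Lemma~\ref{lem:intrinsic_polarization} verified as you claim. A time slice $u_n(t,\cdot)$ carries no residual condition $P(x,D)u_n\to 0$ on a domain to feed into Proposition~\ref{prop:localization}, since the localisation principle polarizes the space--time defect object, not slices. Also, $H_{\lambda_\mu}\lambda_\mu=0$ locates only the support of the pushforward. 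Transferring support or polarization to $\tilde\mu_t$ would require a propagation theorem including the subprincipal/connection term, which the paper does not provide. Remark~\ref{rem:pseudo_diff_lift} is a heuristic about the composition $\mathrm{Op}(\Pi_\nu)\mathrm{Op}(\Pi_\mu)$, not a transport law. The missing ingredient is to build polarization into the notion of transport, in one of two ways. Either define $\mu_t$ as $(\Phi^\mu_t)_*$ composed with a bundle map carrying $\operatorname{Range}\Pi_\mu(z)$ onto $\operatorname{Range}\Pi_\mu(\Phi^\mu_t z)$, or simply assume $\Pi_\mu\mu_t=\mu_t$ and $\Pi_\nu\nu_t=\nu_t$. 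After that, your Steps~1 and~3 finish verbatim as in Theorem~\ref{thm:mode_consist_matrix_proj}, which is all the paper's proof actually establishes.
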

\begin{proof}
Unlike scalar equations where the entire system follows a single Hamiltonian vector field, matrix-valued principal symbols diagonalize into multiple independent transport equations. The wave packets of $\mu$ and $\nu$ are advected along entirely distinct bicharacteristic trajectories governed by $H_{\lambda_\mu}$ and $H_{\lambda_\nu}$.

Because the distributions remain safely away from the crossing manifold $\mathcal{C}$, their intrinsic polarizations remain locked to their respective smooth spectral eigenspaces. By Theorem~\ref{thm:mode_consist_matrix_proj} (Mode Consistency), the spectral projectors $\Pi_\mu$ and $\Pi_\nu$ are globally orthogonal across the entire safe manifold ($\Pi_\mu(x,\xi) \Pi_\nu(x,\xi) = 0$). 

Therefore, even if the distinct Hamiltonian flows cause the spatial projections of $\mu_t$ and $\nu_t$ to physically intersect in the base manifold $\Omega$ at some time $t$, their full phase-space coordinates remain separated in the frequency fibers. Attempting to apply a spatial multiplier $a(x)$ during this physical overlap is mathematically impossible; because the wave packets share the exact same spatial coordinates, a spatial partition cannot isolate them without discontinuous jumps. However, the algebraic matrix pairings operate strictly on the intrinsic polarization in the phase space, collapsing to exactly zero regardless of spatial overlap. The global geometry of the spectral bundle guarantees the conservation of orthogonality across divergent multi-phase flows, bypassing the dimensional blindness of spatial multipliers.
\end{proof}

\begin{remark}[The Permanent Loss of Positivity and Pythagorean Decoupling]
\label{rem:loss_of_positivity}
While algebraic matrix projectors successfully establish Canonical Algebraic Separation, it is critical to recognize what cannot be rescued. Because distributions of order $\kappa \ge 1$ lack the strict measure-theoretic positivity of order-zero Radon measures, the Cauchy-Schwarz inequalities utilized in Section~\ref{sec:order_zero} fundamentally fail. Consequently, the exact Pythagorean decoupling of local energy and the guarantee of strong compactness via self-orthogonality are permanently lost in the $L^p$-$L^q$ framework. Algebraic matrices resolve the geometry, but they cannot artificially inject positivity into higher-order oscillations.
\end{remark}

\begin{remark}[Topological Compactness and Alternative Symbol Classes]
\label{rem:symbol_classes}
The projection onto the cospherical bundle $S^*\Omega$ in Definition~\ref{def:matrix_projectors} is not an arbitrary geometric choice; it is a topological necessity for utilizing standard test spaces. Because microlocal principal matrix symbols are homogeneous of degree zero in the frequency variable $\xi$ for large frequencies, the radial distance is mathematically redundant. The unprojected punctured cotangent bundle $T^*\Omega \setminus \{0\}$ possesses unbounded radial fibers. Consequently, any standard test function $\Phi \in C_c^\infty(T^*\Omega \setminus \{0\})$ must eventually vanish for large $\abs{\xi}$, systematically missing the exact high-frequency oscillatory limits the distributions are designed to capture. 

Projecting onto $S^*\Omega$ compactifies the frequency fibers, allowing the rigorous use of standard $C_c^\infty(\mathcal{M} \setminus \mathcal{C})$ test functions that capture the full directional spectrum without decaying at infinity. An equivalent formulation can be constructed directly within the full unprojected bundle $\mathcal{M} \subset T^*\Omega \setminus \{0\}$, provided the test space $C_c^\infty(\mathcal{M} \setminus \mathcal{C})$ is replaced with an appropriate symbol class, such as $S^0(\mathcal{M} \setminus \mathcal{C})$. This space consists of smooth phase-space functions that are compactly supported in the spatial variable $x$, but bounded and homogeneous of degree zero in the frequency variable $\xi$ for large $\abs{\xi}$. The algebraic separation via $\Pi_\nu \Phi$ remains exact under this symbol class topology.
\end{remark}

\subsection{Physical Interpretation}
\label{sec:physical_interpretation}

The remarks collected here record the physical interpretation of the algebraic framework. They are heuristic in nature---drawing analogies to geometric optics, wave propagation, and quantum mechanics---and are logically independent of the theorems above, on which the mathematical content of the paper rests.

\begin{remark}[Application to Physical Systems and Operator-Dependent Projections]
\label{rem:app_physics}
This algebraic framework allows the rigorous deployment of distributions in physical applications. It is critical to recognize that under this definition, phase-space orthogonality is strictly relative to the governing partial differential equations.

These distributions natively reside on the characteristic manifold $\mathcal{M}$. By defining $\mathcal{M}$ as a projective quotient (such as the cospherical bundle $S^*\Omega$), we guarantee compact fibers, allowing the rigorous use of $C_c^\infty(\mathcal{M} \setminus \mathcal{C})$ for test symbols that are homogeneous of degree zero. The orthogonal symbols $\Pi_\mu(x, \xi)$ and $\Pi_\nu(x, \xi)$ are exactly determined by the projection onto the kernel of the principal symbol of the differential operator, $p(x, \xi)$. For example, the orthogonal projectors separating defect measures in Maxwell's equations (driven by the curl operator) will possess a fundamentally different algebraic structure than those separating measures in the Lam\'e equations of elasticity. Algebraic orthogonality adapts to the specific characteristic variety of the application at hand.
\end{remark}

\begin{remark}[Explicit Realization in Electromagnetism]
\label{rem:maxwell_example}
To concretize this algebraic framework, consider the time-harmonic Maxwell equations in a vacuum, whose principal symbol is the Hermitian curl symbol $p(x,\xi)v = i\, \xi \times v$ (the bare cross-product matrix $\xi \times$ is skew-Hermitian, so the factor $i$ is precisely what renders the formally self-adjoint curl operator's symbol Hermitian). The eigenvalues of this Hermitian system natively partition the state space: $\lambda_0 = 0$ corresponds to the longitudinal electrostatic/magnetostatic modes, while $\lambda_\pm = \pm \abs{\xi}$ correspond to the transverse, propagating light cones. The algebraic projector for the propagating modes, $\Pi_\pm(\xi) = \frac{1}{2} (I - \frac{\xi \otimes \xi}{\abs{\xi}^2} \pm i \frac{\xi \times}{\abs{\xi}})$, explicitly filters the left- and right-circular polarizations. If a high-frequency distribution $\mu$ generated by a positive-helicity wave overlaps physically with a distribution $\nu$ of negative helicity, spatial multipliers will go blind to the intersection. However, Canonical Algebraic Separation recognizes $\Pi_+ \Pi_- = 0$, extinguishing the macroscopic interference between the crossing light beams without spatial tracking.
\end{remark}

\begin{remark}[Connection to WKB Asymptotics and Rapid Phase Cancellation]
\label{rem:wkb_cancellation}
In the context of geometric optics and WKB approximations, solutions are modeled as highly oscillatory wave packets $u_n(x) = a(x) e^{i n S(x)}$. When two distinct physical modes physically overlap in space, their generating sequences possess distinct phase functions ($dS_\mu \neq dS_\nu$). In classical analysis, the macroscopic interference between these modes vanishes in the limit due to the rapid non-stationary phase cancellation of the cross-terms. The algebraic condition $\Pi_\mu \Pi_\nu = 0$ is the exact, coordinate-free formalization of this phenomenon. It guarantees that the macroscopic interference is identically zero without requiring the explicit integration of highly oscillatory phases.
\end{remark}

\begin{remark}[Immunity to Sub-Principal Polarization Rotation]
\label{rem:berry_phase_immunity}
In vector-valued PDE systems, the transport of the distribution along the Hamiltonian flow is not purely advective; as established by Dencker \cite{dencker1982}, the transport equation includes a zeroth-order commutator governed by the sub-principal symbol, which induces an internal rotation of the polarization state (the microlocal Berry phase). It is crucial to note that because our algebraic projectors $\Pi(x,\xi)$ are constructed strictly from the principal symbol, they span the entirety of the isolated eigenspaces. Consequently, while the sub-principal symbol continuously rotates the internal matrix state of the distributions during transport, this rotation is strictly confined within the orthogonal eigenspace. The global algebraic condition $\Pi_\mu \Pi_\nu = 0$ absorbs this internal evolution, rendering phase-space orthogonality immune to sub-principal polarization rotations.
\end{remark}

\begin{remark}[Block-Diagonalization of the Energy-Momentum Tensor]
\label{rem:em_tensor_block}
While generalized Pythagorean decoupling is lost, a physical decoupling remains conserved for the system's observable quantities. In symmetric hyperbolic systems (such as electromagnetism or elasticity), the macroscopic energy-momentum tensor is constructed from the bilinear pairings of the internal states. Because Canonical Algebraic Separation guarantees the global orthogonality of the spectral projectors, the cross-mode momentum fluxes, interaction energies, and internal work terms rigorously evaluate to zero. Therefore, despite the loss of measure-theoretic positivity, the total energy-momentum defect tensor of the interacting system block-diagonalizes along the distinct characteristic varieties. Distinct wave modes may physically overlap, but they exchange zero momentum.
\end{remark}

\begin{remark}[Equivalence to Quantum Decoherence]
\label{rem:quantum_decoherence}
From a macroscopic perspective, the transition to operator-valued measures $\mathcal{M}(X, \mathcal{L}^1(H))$ parallels the transition from pure states to mixed states in quantum mechanics, where the positive semi-definite density matrices $F(x)$ and $G(x)$ govern the system. Within this context, the global annihilation of the cross-measures and the subsequent Pythagorean decoupling of local energy (inherited via Theorem~\ref{thm:inheritance}) is the exact macroscopic manifestation of quantum decoherence. The Operator-Valued Infimum Multiplier guarantees that when the density matrices are orthogonal ($tr(FG)=0$), all off-diagonal interference terms vanish, preserving the statistical independence of the phase-space distributions.
\end{remark}

\section{Alternative Paradigms for Distributional Orthogonality}
\label{sec:alternative_paradigms}

Before concluding, it is worth acknowledging other standard frameworks for defining orthogonality in distribution theory, and why they fall short for the specific needs of the $L^p$-$L^q$ framework.

\begin{itemize}
    \item \textbf{Asymptotic Mollification:} One could define orthogonality through the regularization of distributions using a standard mollifier $\rho_\eps$:
    \begin{equation*}
        \lim_{\eps \to 0} \int_X (\mu * \rho_\eps)(x) \overline{(\nu * \rho_\eps)(x)} \phi(x) \, dx = 0.
    \end{equation*}
    While this elegantly translates geometric separation into energy dissipation, it is unsuitable for distributions of finite order. Mollifying an anisotropic distribution on the phase-space manifold $\mathcal{M}$ would disrupt the delicate differential structure required to pass to the limit in the homogenization framework.

    \item \textbf{Microlocal Disjointness (Wave Front Sets):} A strict microlocal approach defines orthogonality via the disjointness of singularities in the cotangent bundle: $WF(\mu) \cap WF(\nu) = \emptyset$~\cite{hormander1983}. By H\"ormander's criterion, this ensures their pointwise product is a well-defined distribution. However, the Wave Front set is entirely insensitive to smooth, absolutely continuous functions. Because the oscillating sequences in homogenization models (like the heterogeneous Boltzmann equation) often contain overlapping smooth components, this definition fails to capture the full limits required to establish vague convergence.
    
    \item \textbf{Fractional Sobolev Pairing:} If the distributions exhibit specific dual regularity (e.g., $\mu \in H^{-s}$ and $\nu \in H^s$), orthogonality can be defined purely via their duality pairing:
    \begin{equation*}
        \inner{\mu}{\nu}_{H^{-s}, H^s} = \int_{\R^d} \hat{\mu}(\xi) \overline{\hat{\nu}(\xi)} \, d\xi = 0.
    \end{equation*}
    While this offers a highly structured Hilbert-space framework, it requires the distributions to globally belong to these specific fractional spaces. This is overly restrictive for the generalized anisotropic spaces $\D_{l,m}'(S^*\Omega)$ constructed to handle singular transport coefficients \cite{antonic2021, misur2025}.
\end{itemize}

By contrast, an algebraic definition via principal matrix symbols avoids the destructive nature of mollification, captures both smooth and singular parts without divergent boundary transitions, and does not demand strict global Sobolev regularity—making it uniquely suited for systems of PDEs.

\section{The Frontier: Eigenvalue Crossings in Phase-Space}
\label{sec:future_work}

While Section~\ref{sec:resolution} successfully utilizes Operator-Valued Phase-Space Projectors to strictly isolate vector-valued distributions, this framework relies on the distributions operating safely away from the crossing manifold $\mathcal{C}$. This transition exposes a geometric boundary governed by the von Neumann-Wigner theorem.

In physical systems, the matrix projectors $\Pi(x,\xi)$ are derived directly from the spectral decomposition of the governing matrix-valued principal symbol $p(x,\xi)$. By the von Neumann-Wigner theorem, for generic Hermitian matrices parameterized by phase-space variables, eigenvalue crossings form a sub-manifold of codimension 3 (or codimension 2 for real symmetric matrices). Consequently, in any standard phase space of sufficient dimension (e.g., $T^*\Omega$ where $\dim \ge 4$), crossing manifolds $\mathcal{C}$ are not pathological edge cases, but geometrically inevitable generic features.

At these conical intersections, the associated spectral projectors $\Pi(x,\xi)$ cease to be smooth and develop severe algebraic singularities. Because distributions of finite order $\kappa \ge 1$ are continuous only against smooth test functions, pairing them with a singular matrix symbol resurrects the exact divergence we sought to eliminate. The phase-space geometric divergence of scalar multipliers is effectively replaced by the phase-space algebraic singularity of the spectral projector.

For order-zero $L^2$ defect measures, this crossing problem was rigorously addressed by Fermanian-Kammerer and G\'erard \cite{fermanian2002} via the construction of two-scale microlocal measures that resolve the dynamics at the singularity. Future research must build upon this foundation to develop a calculus for higher-order distributions near conical intersections—potentially utilizing microlocal blow-up techniques to resolve the singularities of $\Pi(x,\xi)$ without triggering the $\mathcal{O}(\delta^{-\kappa})$ explosion. This remains the critical next step in finalizing the theory of homogenization in the $L^p$-$L^q$ framework for full physical systems.

\section{Conclusion}
\label{sec:conclusion}
The transition from $L^2$ Hilbert spaces to the $L^p$-$L^q$ framework fundamentally alters the calculus of microlocal defect measures. In this note, we have mapped the severe structural obstructions to defining phase-space orthogonality for higher-order distributions and provided a complete algebraic resolution. 

While smooth spatial multipliers characterize the geometric orthogonality of disjoint coordinates, attempting to subordinate a phase-space partition of unity to rigid boundaries creates a dichotomy. For distributions of finite order $\kappa \ge 1$, separating touching frequency supports in phase-space forces a transition whose separating multiplier suffers an $\mathcal{O}(\delta^{-\kappa})$ derivative divergence under the Leibniz rule; we prove that this divergence defeats the partition-of-unity a priori bound at every order (Corollary~\ref{cor:pou_failure}) and produces an explicit order-one non-orthogonality (Theorem~\ref{thm:boundary_limit}), the general-order pairing blow-up being the scaling identified in Remark~\ref{rem:analytical_div}. Furthermore, transitioning to algebraic phase-space projectors introduces a second fundamental obstruction for scalar distributions: one-dimensional state spaces enforce topological triviality, rendering higher-order scalar distributions fundamentally inseparable. 

We conclude that phase-space geometric partitions cannot serve as the universal mechanism for higher-order separation. Instead, phase-space orthogonality requires a rigorous complementary duality. For strictly disjoint wave packets of the same mode ($\delta_x > 0$), scalar spatial multipliers remain mathematically sound and strictly necessary, as they evaluate zero spatial derivatives. However, relying on spatial multipliers to separate physically overlapping distinct modes fails due to dimensional blindness. The resolution is achieved exclusively within vector-valued PDE systems via Canonical Algebraic Separation. By elevating orthogonality to zero-order algebraic matrix projectors, we replace artificial geometric construction with the intrinsic spectral polarization of the governing system. Finally, we establish the fundamental physical cost of this higher-order transition: while algebraic matrices rescue the phase-space geometry of separation, the loss of measure-theoretic positivity permanently removes the exact Pythagorean decoupling of local energy inherent to classical homogenization.

\section*{Declaration of Generative AI and AI-assisted technologies in the writing process}

During the preparation of this work, the author(s) used Google Gemini as an assistive tool to transcribe handwritten mathematical notes into \LaTeX{} formatting, to polish the English prose for readability, and to help draft the initial abstract and manuscript summary. Additionally, the AI was utilized during the research phase for conceptual exploration, specifically to search heuristically for potential counterexamples to stress-test preliminary hypotheses. After using this tool, the author(s) meticulously reviewed, verified, and edited all generated text and code. All mathematical claims, proofs, and counterexamples were independently rigorously verified by the human author(s). The author(s) take full intellectual responsibility for the final content of this publication, including all mathematical proofs, formatting, and conceptual framing.


\begin{thebibliography}{10}

\bibitem{antonic2025}
N.~Antoni\'c, D.~Mitrovi\'c, and T.~Peri\'c.
\textit{Orthogonality of H-distributions and applications}.
arXiv preprint arXiv:2510.27550, 2025.

\bibitem{antonic2021}
N.~Antoni\'c, M.~Erceg, and M.~Mi\v{s}ur.
\textit{Distributions of anisotropic order and applications to H-distributions}.
Analysis and Applications, 801--843, 2021.

\bibitem{antonic2018}
N.~Antoni\'c, M.~Mi\v{s}ur, and D.~Mitrovi\'c.
\textit{On compactness of commutators of multiplications and Fourier multipliers}.
Mediterranean Journal of Mathematics, 15:170, 2018.

\bibitem{misur2017}
M.~Mi\v{s}ur.
\textit{H-distributions and compactness by compensation}.
PhD thesis, University of Zagreb, 2017.

\bibitem{dencker1982}
N.~Dencker.
\textit{On the propagation of polarization sets for systems of real principal type}.
J. Funct. Anal., 46(3):351--372, 1982.

\bibitem{fermanian2002}
C.~Fermanian-Kammerer and P.~G\'erard.
\textit{Mesures semi-classiques et croisement de modes}.
Bull. Soc. Math. France, 130(1):123--168, 2002.

\bibitem{gerard1991}
P.~G\'erard.
\textit{Microlocal defect measures}.
Comm. Partial Differential Equations, 16:1761--1794, 1991.

\bibitem{hormander1983}
L.~H\"ormander.
\textit{The Analysis of Linear Partial Differential Operators I}.
Springer-Verlag, 1983.

\bibitem{misur2025}
M.~Mi\v{s}ur and Lj.~Palle.
\textit{On some further properties of anisotropic distributions}.
Pure and Applied Functional Analysis, 10:91--104, 2025.

\bibitem{tartar1990}
L.~Tartar.
\textit{H-measures, a new approach for studying homogenisation, oscillations and concentration effects in partial differential equations}.
Proc. Roy. Soc. Edinburgh Sect. A, 115:193--230, 1990.

\end{thebibliography}
\end{document}